\newtheorem{thm}{Theorem}[section]
\newtheorem*{thm*}{Theorem}
\newtheorem{lem}[thm]{Lemma}
\newtheorem{cor}[thm]{Corollary}
\newtheorem{prop}[thm]{Proposition}
\newtheorem{lemma}[thm]{Lemma}
\theoremstyle{definition}
\newtheorem{defn}[thm]{Definition}
\newtheorem{definition}[thm]{Definition}
\newtheorem{example}[thm]{Example}
\newtheorem*{notn*}{Notation}
\newtheorem*{hyp*}{Hypothesis}
\newtheorem{rem}[thm]{Remark}
\newtheorem{remark}[thm]{Remark}
\newtheorem*{rem*}{Remark}
\numberwithin{equation}{section}
\newcommand{\thmref}[1]{Theorem~\textup{\ref{#1}}}
\newcommand{\corref}[1]{Corollary~\textup{\ref{#1}}}
\newcommand{\lemref}[1]{Lemma~\textup{\ref{#1}}}
\newcommand{\propref}[1]{Proposition~\textup{\ref{#1}}}
\newcommand{\defnref}[1]{Definition~\textup{\ref{#1}}}
\newcommand{\midtext}[1]{\quad\text{#1}\quad}
\newcommand{\righttext}[1]{\quad\text{#1 }}
\renewcommand{\and}{\midtext{and}}
\renewcommand{\)}{\textup)}
\newcommand{\EE}{\mathcal E}
\newcommand{\BB}{\mathcal B}
\renewcommand{\AA}{\mathcal A}
\renewcommand{\epsilon}{\varepsilon}
\newcommand{\id}{\text{\textup{id}}}
\newcommand{\<}{\langle}
\renewcommand{\>}{\rangle}
\newcommand{\under}{\backslash}
\newcommand{\inv}{^{-1}}
\renewcommand{\bar}{\overline}
\newcommand{\what}{\widehat}
\newcommand{\ibm}{imprimitivity bimodule}
\definecolor{refkey}{cmyk}{0.93,0.33,0.92,0.25}
\definecolor{labelkey}{cmyk}{0.93,0.33,0.92,0.25}
\definecolor{Dgreen}{cmyk}{0.93,0.33,0.92,0.25} 
\newcommand\set[1]{\{\,#1\,\}}
\newcommand\go{G^{(0)}}
\newcommand\ho{H^{(0)}}
\newcommand\guh{G\backslash H}
\newcommand\gua{G\backslash\AA}
\newcommand\lambdah{\lambda_{H}}
\newcommand\lambdag{\lambda_{G}}
\newcommand\lambdaguh{\lambda_{\guh}}
\newcommand\tensor\otimes
\newcommand\pb{\bar p}
\newcommand\sd[2]{S(#1,#2)}
\newcommand\ac[2]{#1\sdp #2}
\newcommand\sdp{\rtimes}
\newcommand\hsdg{\sd HG}
\def\tg(#1,#2){\tau_{G}(#1,#2)}
\def\th(#1,#2){\tau_{H}(#1,#2)}
\newcommand\jqphi{\bar\rho'}
\newcommand\jqpsi{\bar\sigma}
\newcommand\Aut{\operatorname{Aut}}
\let\ipscriptstyle=\scriptscriptstyle
\def\lipsqueeze{{\mskip -3.0mu}}
\def\ripsqueeze{{\mskip -3.0mu}}
\def\ipcomma{\nobreak\mathrel{,}\nobreak}
\newbox\ipstrutbox
\def\ipstrut{\copy\ipstrutbox}
\def\lip#1<#2,#3>{\mathopen{\relax_{\ipstrut\ipscriptstyle{
#1}}\lipsqueeze
\langle} #2\ipcomma #3 \rangle}
\def\blip#1<#2,#3>{\mathopen{\relax_{\ipstrut
\ipscriptstyle{ #1}}\lipsqueeze\bigl\langle} #2\ipcomma #3 \bigr\rangle}
\def\rip#1<#2,#3>{\langle #2\ipcomma #3
\rangle_{\ripsqueeze\ipstrut\ipscriptstyle{#1}}}
\def\brip#1<#2,#3>{\bigl\langle #2\ipcomma #3
\bigr\rangle_{\ripsqueeze\ipstrut\ipscriptstyle{#1}}}
\def\angsqueeze{\mskip -6mu}
\def\smangsqueeze{\mskip -3.7mu}
\def\trip#1<#2,#3>{\langle\smangsqueeze\langle #2\ipcomma #3
\rangle\smangsqueeze\rangle_{\ripsqueeze\ipstrut\ipscriptstyle{#1}}}
\def\btrip#1<#2,#3>{\bigl\langle\angsqueeze\bigl\langle #2\ipcomma
#3
\bigr\rangle
\angsqueeze\bigr\rangle_{\ripsqueeze\ipstrut\ipscriptstyle{#1}}}
\def\tlip#1<#2,#3>{\mathopen{\relax_{\ipstrut\ipscriptstyle{
#1}}\lipsqueeze \langle\smangsqueeze\langle} #2\ipcomma #3
\rangle\smangsqueeze\rangle}
\def\btlip#1<#2,#3>{\mathopen{\relax_{\ipstrut\ipscriptstyle{
#1}}\lipsqueeze
\bigl\langle\angsqueeze\bigl\langle} #2\ipcomma #3
\bigr\rangle\angsqueeze\bigr\rangle}
\def\ip(#1|#2){(#1\mid #2)}
\def\bip(#1|#2){\bigl(#1 \mid #2\bigr)}
\def\Bip(#1|#2){\Bigl( #1 \bigm| #2 \Bigr)}
\def\Lip<#1,#2>{\lip\scriptstyle\star<#1,#2>}
\def\Rip<#1,#2>{\rip\scriptstyle\star<#1,#2>}
\newcommand\cs{\ensuremath{C^{*}}}
\newcommand{\ib}{im\-prim\-i\-tiv\-ity bi\-mod\-u\-le}
\newcommand{\sme}{\,\mathord{\mathop{\text{--}}\nolimits_{\relax}}\,}
\newcommand{\pre}{{}_}
\newcommand{\actleft}{\curvearrowright}
\newcommand{\actright}{\curvearrowleft}
\newcommand{\lchs}{locally compact Hausdorff space}
\newcommand{\lcg}{locally compact groupoid}
\definecolor{amethyst}{rgb}{0.6,0.4.0.8}
\definecolor{airforceblue}{rgb}{0.36, 0.54, 0.66}
\definecolor{alizarin}{rgb}{0.82, 0.1, 0.26}
\definecolor{amaranth}{rgb}{0.9, 0.17, 0.31}
\definecolor{amber}{rgb}{1.0, 0.75, 0.0}
\definecolor{amber(sae/ece)}{rgb}{1.0, 0.49, 0.0}
\definecolor{americanrose}{rgb}{1.0, 0.01, 0.24}
\definecolor{antiquefuchsia}{rgb}{0.57, 0.36, 0.51}
\definecolor{ao}{rgb}{0.0, 0.0, 1.0}
\definecolor{ao(english)}{rgb}{0.0, 0.5, 0.0}
\definecolor{applegreen}{rgb}{0.55, 0.71, 0.0}
\definecolor{aquamarine}{rgb}{0.5, 1.0, 0.83}
\definecolor{arylideyellow}{rgb}{0.91, 0.84, 0.42}
\definecolor{asparagus}{rgb}{0.53, 0.66, 0.42}
\definecolor{atomictangerine}{rgb}{1.0, 0.6, 0.4}
\definecolor{aureolin}{rgb}{0.99, 0.93, 0.0}
\definecolor{awesome}{rgb}{1.0, 0.13, 0.32}
\definecolor{azure(colorwheel)}{rgb}{0.0, 0.5, 1.0}
\definecolor{ballblue}{rgb}{0.13, 0.67, 0.8}
\definecolor{bananayellow}{rgb}{1.0, 0.88, 0.21}
\definecolor{bittersweet}{rgb}{1.0, 0.44, 0.37}
\definecolor{bleudefrance}{rgb}{0.19, 0.55, 0.91}
\definecolor{blue-violet}{rgb}{0.54, 0.17, 0.89}
\definecolor{blush}{rgb}{0.87, 0.36, 0.51}
\definecolor{brass}{rgb}{0.71, 0.65, 0.26}
\definecolor{brickred}{rgb}{0.8, 0.25, 0.33}
\definecolor{brightgreen}{rgb}{0.4, 1.0, 0.0}
\definecolor{brightlavender}{rgb}{0.75, 0.58, 0.89}
\definecolor{brightmaroon}{rgb}{0.76, 0.13, 0.28}
\definecolor{brightpink}{rgb}{1.0, 0.0, 0.5}
\definecolor{brightturquoise}{rgb}{0.03, 0.91, 0.87}
\newcommand{\units}{^{(0)}}
\newcommand{\pairs}{^{(2)}}
\begin{document}
\title[Groupoid semidirect product Fell bundles II]{Groupoid semidirect product Fell bundles II ---\\
Principal actions and stabilization}

\author[Hall]{Lucas Hall}
\address{School of Mathematical and Statistical Sciences
\\Arizona State University
\\Tempe, Arizona 85287}
\email{lhall10@asu.edu}

\author[Kaliszewski]{S. Kaliszewski}
\address{School of Mathematical and Statistical Sciences
\\Arizona State University
\\Tempe, Arizona 85287}
\email{kaliszewski@asu.edu}

\author[Quigg]{John Quigg}
\address{School of Mathematical and Statistical Sciences
\\Arizona State University
\\Tempe, Arizona 85287}
\email{quigg@asu.edu}

\author[Williams]{Dana P. Williams}
\address{Department of Mathematics
\\Dartmouth College
\\Hanover, New Hampshire 03755}
\email{dana@math.dartmouth.edu}

\subjclass[2000]{Primary  46L05}

\keywords{groupoid, Fell bundle, $C^*$-algebra, groupoid crossed product, principal groupoid action}

\date{May 4, 2021}

\begin{abstract}
  Given a free and proper action of a groupoid on a Fell bundle (over
  another groupoid), we give an equivalence between the
  semidirect-product and the generalized-fixed-point Fell bundles,
  generalizing an earlier result where the action was by a group.  As
  an application, we show that the 
  Stabilization Theorem for Fell bundles over groupoids
  is essentially another form of crossed-product duality.
\end{abstract}
\maketitle

\section{Introduction}

Let $H$ be a locally compact Hausdorff groupoid, and let $p:\AA\to H$
be a Fell bundle.  \cite[Corollary~3.4]{kmqw2} shows that if a locally
compact group $G$ acts principally (i.e., freely and properly) by
automorphisms on $\AA$, then there is a ``Yamagami equivalence''
between the semidirect-product Fell bundle
$\AA\rtimes G\to H\rtimes G$ and the quotient Fell bundle
$G\under \AA\to G\under H$.  The equivalence theorem of
\cite[Theorem~6.4]{mw:fell} (based upon an unpublished preprint of
Yamagami \cite{yamagami}) then gives a Morita equivalence
$C^*(H\rtimes G,\AA\rtimes G)\sim_M C^*(G\under H,G\under \AA)$
between the Fell-bundle algebras.  \cite[Theorem~3.1]{kmqw3} shows
that this should be regarded as a version of Rieffel's equivalence
theorem \cite[Corollary~1.7]{proper} for a ``universal'' generalized
fixed-point algebra, whereas Rieffel's original theorem is in some
sense a ``reduced'' version (see also \cite[Proposition~2.2]{BusEch}).
Similarly, $C^*(G\under H,G\under \AA)$ should be regarded the
universal generalized fixed-point algebra for the action of $G$ on
$\AA$.

In this paper we generalize \cite[Corollary~3.4]{kmqw2} to allow $G$
to be a locally compact groupoid.  This requires some preliminary
work: a key tool in \cite{kmqw2} is a result of Palais (see the
discussion preceding \cite[Proposition~1.3.4]{palais}) that
characterizes principal group bundles as pullbacks. This was parlayed
in \cite[Theorem~A.16]{kmqw2} to a characterization of Fell bundles
carrying a principal action of a group.  Here we need to begin with a
generalization (\thmref{palais}) of Palais' theorem for principal
bundles associated to an
action of a groupoid, rather than just a group.  We also need to
generalize much of the ``infrastructure'' developed in
\cite[Appendix]{kmqw2} from groups to groupoids.  This includes, for
example, quotients and semidirect products of Fell bundles by actions
of a groupoid, and a Palais-type classification
(\thmref{classify Fell})
of
principal actions of groupoids on Fell bundles.

\section{Preliminaries}

We refer to \cite{hkqwsemi} for our conventions regarding
groupoids,
Fell bundles,
Haar systems,
actions of groupoids by isomorphisms on either
groupoids or Fell bundles,
and
semidirect product groupoids and Fell bundles.

Throughout, $G$ and $H$ will be second countable, locally compact Hausdorff
groupoids with Haar systems $\lambdag=\set{\lambdag^{u}}_{u\in \go}$
and $\lambdah=\set{\lambdah^{v}}_{v\in\ho}$, respectively.

If $T$ is a left $G$-space, then we use $G\backslash T$ to denote the
space of orbits with the quotient topology. Since we assume $G$ has a Haar system, the quotient
map $q:T\to G\backslash T$ is open
\cite{wil:toolkit}*{Proposition~2.12}.

If $p:\AA\to G$ is a Fell bundle and $q:\EE\to T$ is an
upper-semicontinuous Banach bundle over a $G$-space $T$, then as in
\cite{mw:fell}*{\S6},\footnote{Note that there is a typo in
  condition~(c) in \cite{mw:fell}*{\S6}.}  we say that $\AA$ acts on (the left of) $\EE$ if there is a continuous map
$(a,e)\mapsto a\cdot e$ from $\AA*\EE=\set{(a,e): s(a)=q(e)}$ to $\EE$
that is bilinear on $A(\gamma)\times E(s(\gamma))$ and such that
\begin{enumerate}[({A}1)]
\item $q(a\cdot e)=p(a)\cdot q(e)$,
\item $a\cdot (b\cdot e)=(ab)\cdot e$ if $(b,e)\in\AA*\EE$ and
  $(a,b)\in \AA\pairs$, and
\item $\|a\cdot e\|\le \|a\|\|e\|$.
\end{enumerate}

\subsubsection*{Fell Bundle Equivalence}

Suppose that $T$ is a $(G,H)$-equivalence.
Then there is a
continuous open map $(t,t')\mapsto \tg(t,t')$ from $T*_{s}T\to G$ such
that $\tg(t,t')\cdot t'=t$ (see \cite{wil:toolkit}*{Lemma~2.42}).
Similarly, there is a continuous open map $\tau_{H}:T*_{r}T\to H$ such that
$t\cdot \th(t,t')=t'$.
\begin{definition}[\cite{mw:fell}*{Definition~6.1}]
\label{def:fell-bundle-equiv}
  Suppose that $T$ is a $(G,H)$-equivalence and $p:\AA\to G$ and
  $\pb:\BB\to H$ are Fell bundles.   We say that an upper
  semicontinuous Banach bundle $q:\EE\to T$ is an
  $\AA-\BB$-equivalence if the following conditions are met.
  \begin{enumerate}[\rm ({E}1)]
  \item\label{E1} There is a left action of $\AA$ and a right action of $\BB$ on
    $\EE$ such that $a\cdot (e\cdot b)=(a\cdot e)\cdot b$ for all
    $a\in\AA$, $e\in \EE$, and $b\in \BB$.
  \item\label{E2} There are sesquilinear maps $(e,f)\mapsto \Lip<e,f>$ from
    $\EE*_{s}\EE$ to $\AA$ and $(e,f)\mapsto \Rip<e,f>$ from
    $\EE*_{r}\EE$ to $\BB$ such that
    \begin{enumerate}[(a)]
    \item $p(\Lip<e,f>))=\tg(q(e),q(f))$ and
      $\pb(\Rip<e,f>)=\th(q(e),q(f))$, 
    \item $\Lip<e,f>^{*}=\Lip<f,e>$ and $\Rip<e,f>^{*}=\Rip<f,e>$,
    \item $\Lip<a\cdot e,f>=a\Lip<e,f>$ and $\Rip<e,f\cdot
      b>=\Rip<e,f>b$, and
    \item $\Lip<e,f>\cdot g=e\cdot\Rip<f,g>$.
    \end{enumerate}
\item With the actions coming from \ref{E1} and the inner products from
  \ref{E2}, each $E(t)$ is a $A(r(t))\sme B(s(t))$-\ib.
  \end{enumerate}
\end{definition}

The Equivalence Theorem---that is,
\cite{mw:fell}*{Theorem~6.4}---implies that if there is a
$\AA-\BB$-equivalence as above, then $\cs(G,\AA)$ and $\cs(H,\BB)$
are Morita equivalent.

\section{Bundles and Proper Actions}
\label{sec:bundl-prop-acti}

Recall that if $G$ is a groupoid and $T$ is a locally compact $G$-space, then we say that
$G$ acts \emph{properly} or that $T$ is a \emph{proper $G$-space} if
the map $\Theta:G*T\to T\times T$ given by $\Theta(x,t)=(x\cdot t,t)$
is proper in that the inverse image of every compact set in $T\times T$ is
compact in $G*T$.

\begin{prop}
  \label{prop-proper-equiv} Suppose that $T$ is a locally compact
  $G$-space.  Then the following are equivalent.
  \begin{enumerate}[\rm (P{A}1)]
  \item $G$ acts properly on $T$.
   \item For every pair of compact sets $K$ and $L$ in $T$,
    \begin{equation}
      \label{eq:3}
      P(K,L)=\set{x\in G:K\cap x\cdot L\not=\emptyset}
    \end{equation}
    is compact in $G$.
  \item If $t_{i}\to t$ and $x_{i}\cdot t_{i}\to t'$ in $T$, then
    $\set{x_{i}}$ has a convergent subnet in $G$.
  \end{enumerate}
  If in addition $G$ acts freely on $T$, then the above conditions are
  equivalent to
  \begin{enumerate}[\rm (P{A}1)] \setcounter{enumi}{3}
  \item\label{PA4}
  If $t_{i}\to t$ and $x_{i}\cdot t_{i}\to t'$ in $T$, then
    $\set{x_{i}}$ is convergent in $G$.
  \end{enumerate}
\end{prop}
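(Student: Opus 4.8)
The plan is to prove the cycle $(PA1)\Rightarrow(PA2)\Rightarrow(PA3)\Rightarrow(PA1)$, and then, under the additional freeness hypothesis, to upgrade $(PA3)$ to $(PA4)$. For $(PA1)\Rightarrow(PA2)$, observe that $P(K,L)$ is essentially the first-coordinate projection of $\Theta\inv(K\times L)$: indeed $x\in P(K,L)$ iff there is $t\in L$ with $x\cdot t\in K$, i.e. iff $(x,t)\in G*T$ with $\Theta(x,t)=(x\cdot t,t)\in K\times L$. Since $K\times L$ is compact in $T\times T$ and $\Theta$ is proper, $\Theta\inv(K\times L)$ is compact in $G*T$; its image under the continuous projection $G*T\to G$ is compact, and this image is exactly $P(K,L)$. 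For $(PA2)\Rightarrow(PA3)$, suppose $t_i\to t$ and $x_i\cdot t_i\to t'$. Choose compact neighborhoods $L$ of $t$ and $K$ of $t'$; then for $i$ large, $t_i\in L$ and $x_i\cdot t_i\in K$, so $x_i\in P(K,L)$, which is compact. Hence $\{x_i\}$ is eventually in a compact set and has a convergent subnet.

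For $(PA3)\Rightarrow(PA1)$, we must show $\Theta\inv(C)$ is compact for every compact $C\subseteq T\times T$. Take a net $(x_i,t_i)$ in $\Theta\inv(C)$; then $(x_i\cdot t_i,t_i)$ lies in the compact set $C$, so after passing to a subnet we may assume $x_i\cdot t_i\to t'$ and $t_i\to t$ for some $(t',t)\in C$. By $(PA3)$, a further subnet gives $x_i\to x$ in $G$. Here one needs $x\in G*T$ compatible with $t$: since $s(x_i)$ must match the base point of $t_i$ (because $(x_i,t_i)\in G*T$), continuity of the source map and of the anchor map on $T$ forces $s(x)=r_T(t)$, so $(x,t)\in G*T$; and then $\Theta(x_i,t_i)\to\Theta(x,t)$, so $(x,t)\in C$ and hence $(x,t)\in\Theta\inv(C)$. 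Thus every net in $\Theta\inv(C)$ has a subnet converging within $\Theta\inv(C)$, which is compactness. Finally, assuming $G$ acts freely: $(PA4)\Rightarrow(PA3)$ is trivial, so it suffices to show $(PA3)\Rightarrow(PA4)$. Given $t_i\to t$ and $x_i\cdot t_i\to t'$, $(PA3)$ yields a subnet $x_{i_j}\to x$. To see the whole net converges to $x$, it is enough to show every convergent subnet has the same limit. If $x_{i_k}\to y$ along another subnet, then passing to the limit in $x_{i_k}\cdot t_{i_k}\to t'$ gives $y\cdot t=t'=x\cdot t$ (using joint continuity of the action and $t_{i_k}\to t$), and freeness of the action forces $y=x$. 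Hence the net $\{x_i\}$ has a unique subnet-limit and, lying eventually in a compact set, converges to it.

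I expect the main obstacle to be the bookkeeping around the fibered-product domain $G*T$ in the groupoid setting, which has no analogue in the group case: one must be careful that limits of $x_i$ land in the correct fiber so that $(x,t)\in G*T$ and $\Theta(x,t)$ makes sense. This is handled by continuity of $s$ and of the moment (anchor) map $T\to\go$, but it is the place where the argument genuinely differs from \cite{kmqw2}. Everything else is a routine translation of the standard ``properness via nets'' equivalences (as in \cite{wil:toolkit}) to the $G$-space context, together with the observation that local compactness of $T$ supplies the compact neighborhoods needed in $(PA2)\Rightarrow(PA3)$.
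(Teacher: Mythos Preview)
Your argument is correct and follows the standard route; the only slip is the phrase ``so $(x,t)\in C$'' in the $(PA3)\Rightarrow(PA1)$ step, where you mean $\Theta(x,t)\in C$ (the conclusion $(x,t)\in\Theta\inv(C)$ is then right). The paper itself does not prove this proposition at all---it simply cites \cite{wil:toolkit}*{Proposition~2.17 and Corollary~2.26}, where exactly the argument you have written out is recorded. So your proposal is not a different approach so much as an unpacking of the reference the authors chose to invoke; the content matches.

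One small comment on the $(PA3)\Rightarrow(PA4)$ step: your closing sentence relies on the net lying eventually in a compact set, which you established back in $(PA2)\Rightarrow(PA3)$ and are tacitly importing via the already-proven equivalence of (PA1)--(PA3). That is fine, but it would be cleaner to argue directly: if $\{x_i\}$ failed to converge to $x$, some subnet would avoid a neighborhood of $x$, yet by $(PA3)$ that subnet has a further convergent subnet, whose limit your uniqueness argument forces to be $x$---a contradiction. This avoids the detour through (PA2).
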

\begin{proof}
  This is a combination of \cite{wil:toolkit}*{Proposition~2.17 and
    Corollary~2.26.}.
\end{proof}

\begin{remark}
  It should be noted that this definition of a proper groupoid action
  is only appropriate for actions of locally compact
  groupoids on locally compact spaces.  In \cite{palais}, for
  \emph{group} actions, Palais writes $((K,L))$ in place of $P(K,L)$
  in Proposition~\ref{prop-proper-equiv}.  If $G$ and $T$ are
  completely regular, then Palais calls a $G$-space $T$ proper if each
  point has a neighborhood $S$ such that every point in $T$ has a
  neighborhood $U$ such that $((S,U))$ has compact closure
  \cite{palais}*{Definition~1.2.2}.  Our definitions coincide for
  locally compact group actions on locally compact (Hausdorff) spaces
  by \cite{palais}*{Theorem~1.2.9}.
We will discuss proper actions on \lchs s.
\end{remark}

Note that if $t=t'$ in (PA4), then we must have $x_{i}\to x$ for some
$x\in G$, and $x_{i}\cdot t_{i}\to x\cdot t$.  Since $T$ is Hausdorff,
$x\cdot t=t$ and $x\in G(t)=\set{x\in G:x\cdot t=t}$.  Thus we have
the following useful observation.

\begin{cor}
  \label{cor-principal-sc} Suppose that $T$ is a locally compact free
  and proper $G$-space with moment map $\rho:T\to\go$.  If
  $t_{i}\to t$ and $x_{i}\cdot t_{i}\to t$, then $x_{i}\to \rho(t)$.
\end{cor}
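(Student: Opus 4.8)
The plan is to read the conclusion off directly from condition~(PA4) of \propref{prop-proper-equiv} together with freeness of the action; indeed, this is essentially the argument already sketched in the paragraph preceding the statement. First, since $T$ is a free and proper $G$-space, \propref{prop-proper-equiv} applies, and in particular (PA4) holds. Applying (PA4) with $t'=t$, the hypotheses $t_{i}\to t$ and $x_{i}\cdot t_{i}\to t$ in $T$ give that $\set{x_{i}}$ converges in $G$, say $x_{i}\to x$.

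Next I would invoke joint continuity of the action map on $G*T$. Since $x_{i}\to x$ and $t_{i}\to t$ with $s(x_{i})=\rho(t_{i})$, continuity of $\rho$ and of the structure maps of $G$ forces $s(x)=\rho(t)$, so $(x,t)\in G*T$, and hence $x_{i}\cdot t_{i}\to x\cdot t$. Because $T$ is Hausdorff, limits are unique, so $x\cdot t=t$; that is, $x$ lies in the stabilizer $G(t)=\set{y\in G:y\cdot t=t}$.

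Finally, freeness of the $G$-action means precisely that every stabilizer is trivial, i.e. $G(t)=\set{\rho(t)}$, so that $x=\rho(t)$ and therefore $x_{i}\to\rho(t)$, as claimed. There is no genuine obstacle here: each step is a direct appeal either to \propref{prop-proper-equiv}, to continuity of the moment map and the action, or to the definition of a free action. The one point deserving a word of care is that the source/range matchings needed for $x_{i}\cdot t_{i}$ and $x\cdot t$ to be defined survive the passage to the limit, and this is immediate from continuity of $\rho$.
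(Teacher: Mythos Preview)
Your argument is correct and follows exactly the same route as the paper: the paragraph immediately preceding the corollary already sketches the proof by applying (PA4) with $t'=t$, using continuity of the action and Hausdorffness to get $x\cdot t=t$, and then invoking freeness to conclude $x=\rho(t)$. Your only additions are the explicit check that $(x,t)\in G*T$ via continuity of $\rho$ and the source map, which the paper leaves implicit.
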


Recall from the
beginning of Section~3 of \cite{hkqwsemi}  our conventions
regarding \emph{$G$-bundles} $(T,p,B)$.

\begin{defn}\label{principal G space}
  We will call a $G$-space $T$ \emph{principal} if the
  $G$-action is both free and proper.  
Then a $G$-bundle $(T,p,B)$ is called
  \emph{principal} if $T$ is principal.\footnote{It should
    be emphasized that we \emph{do not} require our principal
    $G$-spaces to be locally
    trivial.}
\end{defn}

\begin{lem}\label{iso}
  If $G$ is a locally compact groupoid and $p:T\to B$ and $p':T'\to B$
  are principal $G$-bundles, then every $G$-bundle morphism
  $f:T\to T'$ over $B$ is an isomorphism.
\end{lem}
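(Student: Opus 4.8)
The statement is the groupoid analogue of the classical fact that a map of principal bundles over a fixed base, commuting with the group action, is automatically an isomorphism. The plan is to prove $f$ is a continuous bijection with continuous inverse, where the inverse is constructed using the ``division map'' $\tau_G$ that exists on free proper actions (cf.\ \lemref{iso}'s hypotheses and the discussion preceding \defnref{def:fell-bundle-equiv}).

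\emph{Step 1: Bijectivity on fibers.} Fix $b\in B$ and compare the fibers $T_b=p^{-1}(b)$ and $T'_b=(p')^{-1}(b)$. Since $f$ is a $G$-bundle morphism over $B$, it restricts to a $G$-equivariant map $T_b\to T'_b$ (here ``$G$-equivariant'' means intertwining the moment maps and the partial action). The key point is that a free $G$-space fiber $T_b$ is, if nonempty, a single $G$-orbit that is ``$G$-affine'': for $t,t'\in T_b$ with matching moment-map values there is a \emph{unique} $x\in G$ with $x\cdot t'=t$, namely $x=\tau_G(t,t')$ in the notation available to us. First I would check surjectivity: the morphism condition forces $p$ and $p'$ to have the same image in $B$, and on a common fiber, picking $t_0\in T_b$, every $t'\in T'_b$ has the form $t'=x\cdot f(t_0)$ for a suitable $x\in G$ (using that $T'_b$ is a single orbit), whence $t'=f(x\cdot t_0)$ by equivariance. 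For injectivity: if $f(t_1)=f(t_2)$ then $\rho(t_1)=\rho(t_2)$ (same moment map through $f$), so $t_2=x\cdot t_1$ for some $x$, and then $x\cdot f(t_1)=f(t_2)=f(t_1)$ forces $x\in G(f(t_1))$, which by freeness of the $G$-action on $T'$ is trivial, so $t_1=t_2$.

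\emph{Step 2: Continuity of $f^{-1}$.} This is the main obstacle, since a continuous equivariant bijection need not be a homeomorphism without properness. Here I would use \corref{cor-principal-sc} (or condition (PA4) of \propref{prop-proper-equiv}) directly. Suppose $f(t_i)\to f(t)$ in $T'$; I want $t_i\to t$ in $T$. Applying $p'$ and using that $f$ is over $B$, we get $p(t_i)=p'(f(t_i))\to p'(f(t))=p(t)$ in $B$. Since $p:T\to B$ is an open surjection (the quotient map of a free proper action; cf.\ \cite{wil:toolkit}*{Proposition~2.12}), after passing to a subnet there are $s_i\to t$ in $T$ with $p(s_i)=p(t_i)$. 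Then $s_i$ and $t_i$ lie in the same fiber with $\rho(s_i)=\rho(t_i)$ (matching moment maps, using again that $f$ preserves them), so $t_i=x_i\cdot s_i$ for a unique $x_i=\tau_G(t_i,s_i)\in G$, and $x_i\cdot s_i\to t$ would follow if we knew $x_i\to\rho(t)$. Apply $f$: $f(t_i)=x_i\cdot f(s_i)$, and $f(s_i)\to f(t)$ by continuity of $f$, while $x_i\cdot f(s_i)=f(t_i)\to f(t)$; so by \corref{cor-principal-sc} applied to the principal $G$-space $T'$ we get $x_i\to\rho(f(t))=\rho(t)$. Therefore $t_i=x_i\cdot s_i\to\rho(t)\cdot t=t$. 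Since every subnet of $(t_i)$ has a further subnet converging to $t$, the whole net converges to $t$, giving continuity of $f^{-1}$.

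\emph{Step 3: Conclusion.} Combining Steps 1 and 2, $f$ is a continuous $G$-equivariant bijection over $B$ with continuous inverse; the inverse $f^{-1}$ is automatically $G$-equivariant and over $B$ as well, so it is a $G$-bundle morphism, and $f$ is an isomorphism of $G$-bundles. The only subtlety worth flagging is matching up the moment maps $\rho:T\to\go$ and $\rho':T'\to\go$: since $f$ is a $G$-bundle morphism it intertwines the $G$-actions, hence $\rho'\circ f=\rho$, which is exactly what makes the ``division map'' arguments in Steps 1 and 2 legitimate.
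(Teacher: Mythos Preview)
Your proof is correct and follows essentially the same approach as the paper: injectivity and surjectivity are argued identically, and your Step~2 (continuity of $f^{-1}$ via lifting along the open map $p$ and then applying \corref{cor-principal-sc} on $T'$) is exactly the paper's Fell's-Criterion argument for openness of $f$, just phrased from the other side of the bijection. One small inaccuracy to clean up: in Step~2 the claim ``$\rho(s_i)=\rho(t_i)$'' is neither true in general nor needed --- the existence of a unique $x_i$ with $t_i=x_i\cdot s_i$ follows simply from $p(s_i)=p(t_i)$ (same $G$-orbit) together with freeness.
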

\begin{proof}
  To see that $f$ must be injective, suppose that $f(t)=f(t')$.  Then
  \begin{equation}
    \label{eq:4}
    p(t)=p'(f(t))=p'(f(t'))=p(t').
  \end{equation}
  Thus there is a unique $x\in G$ such that $t'=x\cdot t$.  But then
  $f(t)=f(t') =x\cdot f(t)$ and $x=\rho_{T}(t)$ since that action is
  free.  Thus $t=t'$ as required.

  To see that $f$ is surjective, let $t'\in T'$.  Since $p$ is
  surjective, there is a $t\in T$ such that $p(t)=p'(t')$.  Then
  $p'(f(t))=p(t)=p'(t')$. Hence there is an $x\in G$ such that
  $x\cdot f(t)=t'$.  But then $f(x\cdot t)=t'$.  Thus $f$ is
  surjective.

  To show that $f$ is open, we employ Fell's Criterion.  Suppose that
  $t_{i}'\to f(t)$ in $T'$.  Then $p'(t_{i}')\to p'(f(t))=p(t)$ in
  $B$.  Since $p$ is open, we can lift $\set{p'(t_{i}')}$ using
  Fell's Criterion (see \cite{hkqwsemi}*{Lemma~2.1}).
  Thus, after passing to a subnet and relabeling,
  we can find $t_{i}\to t$ in $T$ such that $p(t_{i})=p'(t_{i}')$.
  Then $p'(f(t_{i}))=p'(t_{i}')$ and there are $x_{i}\in G$ such that
  $x_{i}\cdot f(t_{i})=t_{i}'$.  But then $f(t_{i})\to f(t)$ while
  $x_{i}\cdot f(t_{i})\to f(t)$.  By
  Corollary~\ref{cor-principal-sc}, this forces
  $x_{i}\to \rho_{T'}(f(t))$.  But since $f$ is $G$-equivariant,
  $\rho_{T'}(f(t))=\rho_{T}(t)$.  Therefore $x_{i}\cdot t_{i}\to t$
  and $f(x_{i}\cdot t_{i})=t_{i}'$.
\end{proof}

\begin{lem}
  [Diagonal Actions] \label{diagonal} If $T$ and $U$ are $G$-spaces,
  then the diagonal action of $G$ on $T*_{r}U$ is principal if either
  $T$ or $U$ is principal.  
\end{lem}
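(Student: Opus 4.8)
The plan is to check directly that the diagonal action $x\cdot(t,u)=(x\cdot t,x\cdot u)$ on $T*_rU$ is free and proper, using the characterizations from \propref{prop-proper-equiv}. First I would dispose of freeness: if $x\cdot(t,u)=(t,u)$, then $x\cdot t=t$ and $x\cdot u=u$; if (say) $T$ is free, the first equation already forces $x=\rho_T(t)$, a unit, so the diagonal action is free. Thus it remains only to prove properness, and by symmetry I may assume $T$ is principal (hence both free and proper) while $U$ is merely a $G$-space.

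For properness I would use the sequential criterion (PA3): suppose $(t_i,u_i)\to(t,u)$ in $T*_rU$ and $x_i\cdot(t_i,u_i)=(x_i\cdot t_i,\,x_i\cdot u_i)\to(t',u')$; I must produce a convergent subnet of $\{x_i\}$. Projecting to the first coordinate, $t_i\to t$ in $T$ and $x_i\cdot t_i\to t'$ in $T$, and since $T$ is a (free and) proper $G$-space, \propref{prop-proper-equiv}(PA3)---or, invoking freeness, (PA4)---gives that $\{x_i\}$ itself converges in $G$ (no subnet needed). That is the whole content: the convergent behavior is entirely controlled by the principal factor, and the other factor comes along for free. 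One should check the compatibility of the fibered product with the limits---that $(t,u),(t',u')$ genuinely lie in $T*_rU$ and that $x\cdot u=\lim x_i\cdot u_i$ makes sense---but this is automatic from joint continuity of the action and the fact that $T*_rU$ is closed in $T\times U$.

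I would also remark that if one prefers the compact-set formulation (PA2), the argument is equally short: for compact $K,L\subseteq T*_rU$, the projections $K_1,L_1\subseteq T$ are compact, and $P(K,L)\subseteq\{x\in G: K_1\cap x\cdot L_1\neq\emptyset\}=P(K_1,L_1)$, which is compact because $T$ is proper; being a closed subset of a compact set, $P(K,L)$ is compact. Either way the only genuine obstacle is a bookkeeping one—making sure the range-map matching condition defining $T*_rU$ is preserved in the limit—rather than anything substantive, so the lemma follows quickly from \propref{prop-proper-equiv}.
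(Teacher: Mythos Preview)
Your proposal is correct and follows essentially the same route as the paper: project to the principal factor and invoke the net criterion from \propref{prop-proper-equiv}. The paper's proof is terser---it jumps straight to (PA4) without separately verifying freeness and omits your alternative (PA2) argument---but the substance is identical.
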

\begin{proof}
This follows easily from item~\ref{PA4} of
  Proposition~\ref{prop-proper-equiv}: suppose $T$ is proper and that
  both $(t_{i},u_{i})\to (t,u)$ and
  $x_{i}\cdot (t_{i},u_{i})\to (t',u')$ in $T*U$.  Then $t_{i}\to t$
  and $x_{i}\cdot t_{i}\to t'$.  Then $\set{x_{i}}$ is convergent by
  \ref{PA4}.
\end{proof}

\begin{lem}
 \label{pullback}
 Let $(T,p,B)$ be a bundle and
  $f:C\to B$ a continuous map.  Then the pullback bundle
  $f^{*}T$ is principal
  whenever $T$ is.
\end{lem}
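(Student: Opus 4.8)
Recall that the pullback bundle is $f^*T = \{(c,t) \in C \times T : f(c) = p(t)\}$ with bundle map $(c,t) \mapsto c$, and the $G$-action (inherited from the $G$-bundle structure on $(T,p,B)$, where the moment map factors through $B$) is given by $x \cdot (c,t) = (c, x\cdot t)$ whenever $s(x) = \rho_T(t)$. The plan is to verify freeness and properness separately, transporting each from $T$ to $f^*T$ via the second-coordinate projection $\pi \colon f^*T \to T$, $(c,t)\mapsto t$, which is continuous and $G$-equivariant.

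**Freeness.** First I would dispatch freeness: if $x \cdot (c,t) = (c,t)$, then $(c, x\cdot t) = (c,t)$, so $x \cdot t = t$, and since $T$ is a free $G$-space, $x = \rho_T(t)$ is a unit. So the action on $f^*T$ is free.

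**Properness.** For properness I would use the sequential/net criterion \ref{PA4} of \propref{prop-proper-equiv} (available since the action is now known to be free). Suppose $(c_i, t_i) \to (c,t)$ and $x_i \cdot (c_i, t_i) \to (c', t')$ in $f^*T$. Since $f^*T$ carries the subspace topology from $C \times T$, convergence in $f^*T$ gives $t_i \to t$ in $T$ and, looking at the second coordinate of $x_i\cdot(c_i,t_i) = (c_i, x_i \cdot t_i)$, also $x_i \cdot t_i \to t'$ in $T$. Because $T$ is a proper $G$-space, \ref{PA4} applied to $T$ yields that $\{x_i\}$ converges in $G$. That is exactly \ref{PA4} for $f^*T$, so $f^*T$ is proper. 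Combining with freeness, $f^*T$ is principal, hence $(f^*T, \pi_C, C)$ is a principal $G$-bundle.

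**Where the work is.** There is essentially no obstacle here: the only point requiring a moment's care is that the $G$-action on the pullback is well-defined and that the topology on $f^*T$ is the subspace topology from $C \times T$ (so that convergence of nets decouples into the two coordinates); both are immediate from the definition of a pullback bundle recalled at the start of Section~3 of \cite{hkqwsemi}. Local compactness of $f^*T$, needed to invoke \propref{prop-proper-equiv}, follows since $f^*T$ is a closed subspace of $C \times T$ with $C$ and $T$ locally compact Hausdorff. One could alternatively argue properness directly from (PA2) by noting $P_{f^*T}(K,L) \subseteq P_T(\pi(K), \pi(L))$ for compact $K, L \subseteq f^*T$, but the net argument is cleaner.
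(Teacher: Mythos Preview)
Your proof is correct and follows essentially the same route as the paper: the paper's one-line proof invokes item~\ref{PA4} of \propref{prop-proper-equiv} ``just as in \lemref{diagonal}'', which amounts to exactly the net argument you give---project to the $T$-coordinate and use that $T$ is principal to force convergence of $\{x_i\}$. Your version simply spells out the freeness step and the local-compactness verification that the paper leaves implicit.
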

\begin{proof}
  If $T$ is principal, then we can use item~\ref{PA4} of
  Proposition~\ref{prop-proper-equiv} to see that $f^{*}T$ is
  principal just as in Lemma~\ref{diagonal}.
\end{proof}

We have the following universal characterization for
principal bundle pullbacks, which is modeled on that for groups due to
Palais mentioned in the introduction.  Our conventions for pull-backs
are recorded immediately preceding Lemma~3.2 in \cite{hkqwsemi}.

\begin{thm}[Palais]\label{palais}
  Suppose that $(f,g):(T,p,B)\to (U,q,C)$ is a morphism of principal
  $G$-bundles.  Then $T$ is isomorphic to the pullback $g^{*}U$.  In
  particular, the canonical map $h$ making the diagram
  \begin{equation}
    \label{eq:6}
    \begin{tikzcd}
      T \arrow[dd,"p",swap] \arrow[dr,"h",dashed,swap]
      \arrow[dr,"!",dashed] \arrow[rr,"f"] && U
      \arrow[dd,"q",swap]  \\
      & g^{*}U=B*U \arrow[ru,"\pi_{2}"] \arrow[dl,"\pi_{1}"] \\
      B\arrow[rr,"g"] &&C
    \end{tikzcd}
  \end{equation}
  commute is a $G$-bundle isomorphism over $B$.
\end{thm}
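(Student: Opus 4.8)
The plan is to exhibit the map $h$ explicitly, observe that it is a $G$-bundle morphism over $B$ between two \emph{principal} $G$-bundles, and then invoke \lemref{iso}. So first I would define $h\colon T\to g^{*}U=B*U$ by $h(t)=(p(t),f(t))$. Since $(f,g)$ is a bundle morphism we have $q(f(t))=g(p(t))$, so $(p(t),f(t))$ really does lie in $B*U$. The map $h$ is continuous because both of its coordinate maps are, and it satisfies $\pi_{1}\circ h=p$ and $\pi_{2}\circ h=f$; these two identities express commutativity of the two triangles in the diagram of the statement and pin $h$ down uniquely, so $h$ is \emph{the} map making that diagram commute. Finally $h$ is $G$-equivariant: with the pull-back $G$-action $x\cdot(b,u)=(b,x\cdot u)$ on $g^{*}U$ recorded in \cite{hkqwsemi}, the $G$-equivariance of $f$ together with the $G$-invariance of $p$ gives $h(x\cdot t)=(p(x\cdot t),f(x\cdot t))=(p(t),x\cdot f(t))=x\cdot h(t)$ whenever $s(x)=\rho_{T}(t)$. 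Thus $h$ is a $G$-bundle morphism over $B$.

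Next I would check that both bundles in sight are principal $G$-bundles over $B$. The bundle $(T,p,B)$ is principal by hypothesis, and $(U,q,C)$ is principal by hypothesis, so \lemref{pullback} shows that the $G$-action on the pull-back $g^{*}U$ is again free and proper. Moreover $\pi_{1}\colon g^{*}U\to B$ is a continuous open surjection (surjective because $q$ is, open because $q$ is and openness passes to pull-backs), and two points $(b,u),(b,u')\in g^{*}U$ lie in a common $\pi_{1}$-fibre precisely when $q(u)=q(u')=g(b)$, i.e.\ precisely when $u'=x\cdot u$ for a (necessarily unique) $x\in G$, i.e.\ precisely when $(b,u')$ and $(b,u)$ lie in a common $G$-orbit. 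Hence $(g^{*}U,\pi_{1},B)$ is a principal $G$-bundle over $B$.

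Now \lemref{iso} applies directly: a $G$-bundle morphism over $B$ between principal $G$-bundles is automatically an isomorphism. Therefore $h$ is a $G$-bundle isomorphism over $B$, and in particular $T\cong g^{*}U$. I do not expect a genuine obstacle here: the one substantive point, that $h$ is open, has effectively already been carried out inside the proof of \lemref{iso} (via Fell's criterion and \corref{cor-principal-sc}). The only steps that need care are the bookkeeping verifications that $g^{*}U$ is honestly a principal $G$-bundle over $B$ — in particular that $\pi_{1}$ is open and has the $G$-orbits as its fibres — and for these one leans on \lemref{pullback} and the pull-back conventions recorded in \cite{hkqwsemi}.
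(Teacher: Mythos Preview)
Your proposal is correct and follows essentially the same approach as the paper: define $h(t)=(p(t),f(t))$, verify $G$-equivariance exactly as you do, note that $g^{*}U$ is principal by \lemref{pullback}, and invoke \lemref{iso}. The paper's proof is simply a terser version of yours, omitting the bookkeeping checks on the pullback bundle structure that you spell out.
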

\begin{proof}
  The pullback $g^{*}U$ is principal by Lemma~\ref{pullback}.
  Moreover, $h$ is $G$-equivariant:
  \begin{align}
    \label{eq:7}
    h(x\cdot t)
    &=\bigl(p(x\cdot t),f(x\cdot t)\bigr)
      =\bigl(p(t),x\cdot f(t)\bigr)
    \\
    &=x\cdot \bigl(p(t),f(t)\bigr)
      =x\cdot h(t).
  \end{align}
  Thus $h$ is a $G$-bundle isomorphism by \lemref{iso}.
\end{proof}

\begin{defn}\label{bundle principal}
We say that an action  of $G$ on a bundle $p:T\to B$ over a locally compact
space $B$ is \emph{principal}
if the action on the base space $B$ is principal.
\end{defn}

\begin{remark} \label{rem-act-fell} Suppose that $G$ acts principally
  on a bundle $p:T\to B$ as above.  Then the $G$-action on $T$ must be
  free since $x\cdot t=t$ implies that $x\cdot p(t)=p(t)$.
  Furthermore, the action on $T$ must also satisfy item~\ref{PA4} of
  Proposition~\ref{prop-proper-equiv}: if $t_{i}\to t$ and
  $x_{i}\cdot t_{i}\to t'$, then $p(t_{i})\to p(t)$ and
  $x_{i}\cdot p(t_{i})\to p(t')$.  Hence $\set{x_{i}}$ converges since
  the $G$-action on $B$ is proper.  Hence if $T$ is locally compact,
  it is a principal $G$-space.  However, we want to allow for the case
  that $T$ might be neither locally compact nor
  Hausdorff---specifically, $T$ may be a Fell bundle.  But we still
  will have $B$ locally compact and Hausdorff.  Hence if $G$-acts
  principally on the bundle and $t_{i}\to t$ while
  $x_{i}\cdot t_{i}\to t$, then we have $x_{i}\to x$ and then
  $x_{i}\cdot t_{i}\to x\cdot t$.  But then $p(t)=x\cdot p(t)$, and
  since $B$ is Hausdorff, it follows that $x=\rho_{B}(p(t))=\rho_{T}(t)$, and we
  recover the conclusion of Corollary~\ref{cor-principal-sc} in this
  case.
\end{remark}

\section{Actions by isomorphisms}
\label{sec:actions}

\begin{defn}\label{act on groupoid}
Let $H$ be a locally compact groupoid, and let $G$ act on the space $H$.
We say the $G$-action on $H$ is
\emph{principal and by isomorphisms} if it is principal as an action
on the space $H$ and is also an action by isomorphisms. 
\end{defn}

\begin{rem}
Actions by isomorphisms are also discussed in \cite{busmeyfibration}.
\end{rem}

We want to extend \cite{kmqw2}*{Proposition~A.10} to principal
groupoid actions.  Recall that if $A$ and $B$ are subsets of a
groupoid $H$, then $AB=\set{ab:(a,b)\in A\times B\cap H\pairs}$.

\begin{lem}\label{lem-tech}
  Suppose that $G$ acts freely on $H$ by isomorphisms.   If
  $G\cdot s_{H}(h)=G\cdot r_{H}(k)$, then
  \begin{equation}
    \label{eq:11}
    (G\cdot h)(G\cdot k)=G\cdot (h'k')
  \end{equation}
for any $h'\in G\cdot s_{H}(h)$ and $k'\in G\cdot r_{H}(k)$ such that
$s_{H}(h')=r_{H}(k')$. 
\end{lem}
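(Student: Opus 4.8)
\emph{Proof plan.} The content of the lemma is that, once the two orbits can be composed at all, their set product in $H$ is again a single $G$-orbit, and that this orbit is named by the orbit of \emph{any} composable pair of representatives. The hypothesis $G\cdot s_H(h)=G\cdot r_H(k)$ is exactly what guarantees both that such representatives $h',k'$ (with $h'\in G\cdot h$, $k'\in G\cdot k$, $s_H(h')=r_H(k')$) exist and that $(G\cdot h)(G\cdot k)$ is nonempty, so both sides of \eqref{eq:11} are nonempty. I would prove the two inclusions separately, and I expect only the second to use the freeness hypothesis.

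\emph{The inclusion $G\cdot(h'k')\subseteq(G\cdot h)(G\cdot k)$.} Let $x\in G$ be composable with $h'k'$. By equivariance of multiplication, $x\cdot(h'k')=(x\cdot h')(x\cdot k')$. Since $x\cdot h'$ lies in the orbit $G\cdot h'=G\cdot h$ and $x\cdot k'\in G\cdot k$, and since equivariance of the range and source maps gives $s_H(x\cdot h')=x\cdot s_H(h')=x\cdot r_H(k')=r_H(x\cdot k')$, the element $(x\cdot h')(x\cdot k')$ is a legitimate member of $(G\cdot h)(G\cdot k)$. No freeness is used here.

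\emph{The inclusion $(G\cdot h)(G\cdot k)\subseteq G\cdot(h'k')$.} A typical element is $(x_1\cdot h)(x_2\cdot k)$, and composability of this product in $H$ forces $x_1\cdot s_H(h)=x_2\cdot r_H(k)$. Write $h'=a\cdot h$ and $k'=b\cdot k$; then $s_H(h')=r_H(k')$ says $a\cdot s_H(h)=b\cdot r_H(k)$. Using these two identities to eliminate $s_H(h)$ and $r_H(k)$, one is left comparing two elements of $G$ that act equally on the unit $r_H(k)\in\ho$. Here I would invoke freeness: the $G$-action on $\ho$ is free because the action on $H\supseteq\ho$ is (a $G$-element fixing a point of $\ho$ is forced to be a unit of $G$), so the two group elements must agree, which amounts to $x_1a^{-1}=x_2b^{-1}$. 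Calling this common value $w$, one then has $x_1\cdot h=w\cdot h'$ and $x_2\cdot k=w\cdot k'$, whence $(x_1\cdot h)(x_2\cdot k)=w\cdot(h'k')\in G\cdot(h'k')$ by equivariance of multiplication once more. Along the way I would track the range and source maps in $G$ to confirm that every composite written down ($x_1a^{-1}$, $a^{-1}b$, $w\cdot(h'k')$, and so on) is actually defined; this is automatic once one records that $h'=a\cdot h$ and $k'=b\cdot k$ pin down the relevant moment-map values and that $a\cdot s_H(h)=b\cdot r_H(k)$ forces $r_G(a)=r_G(b)$.

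\emph{Main obstacle.} The only step with real content is the middle of the second inclusion: passing from the two ``sources/ranges agree in the orbit'' identities to the genuine equality $x_1a^{-1}=x_2b^{-1}$ in $G$. This is precisely where freeness is indispensable; without it one only learns that $x_1a^{-1}$ and $x_2b^{-1}$ differ by an element of the stabilizer of $r_H(k)$, and then $G\cdot(h'k')$ would depend on the chosen representatives and the statement would fail. Everything else is equivariance of the groupoid operations together with moment-map bookkeeping, so I would keep that part brief.
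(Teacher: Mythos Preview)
Your proposal is correct and follows essentially the same approach as the paper: one inclusion by equivariance of multiplication, the other by freeness applied to a unit in $\ho$. The only difference is cosmetic: the paper first observes that replacing $(h,k)$ by $(h',k')$ does not change the left-hand side, so one may assume $s_H(h)=r_H(k)$ from the outset; this collapses your $a,b$ to units and reduces your key identity $x_1a^{-1}=x_2b^{-1}$ to the cleaner statement $x=y$, but the argument is otherwise identical.
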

\begin{proof}
  Since we can replace $h$ by $h'$ and $k$ by $k'$ without changing
  the left-hand side of \eqref{eq:11}, we may as well assume
  $s(h)=r(k)$ from the start.   
We
  can assume (see \cite{hkqwsemi}*{Remark~5.2}) that $h,k,hk\in H_{u}$ where $u=\rho(s(h))=\rho(r(k))$, and where in turn
  $\rho:H\to \go$ is the moment map.  Thus $G\cdot h=G_{u}\cdot h$,
  $G\cdot k=G_{u}\cdot k$ and $G\cdot (hk)=G_{u}\cdot (hk)$.  Since
  \begin{equation}
    \label{eq:12}
    x\cdot (hk)=(x\cdot h)(x\cdot k),
  \end{equation}
  we have $G\cdot (hk) \subset (G\cdot h)(G\cdot k)$.

  On the other hand, if $s(x\cdot h)=r(y\cdot k)$, then
  \begin{equation}
    \label{eq:13}
    x\cdot s(h)=y\cdot r(k).
  \end{equation}
Since $s(h)=r(k)$ and $G$ acts freely, we must have $x=y$, therefore
$(x\cdot h)(x\cdot k)=x\cdot (hk)\in G\cdot (hk)$.   Thus $(G\cdot
h)(G\cdot k)\subset G\cdot (hk)$ and \eqref{eq:11} holds.
\end{proof}

Let
\begin{equation}
  \label{eq:14}
  (G\backslash H)\pairs=\set{(G\cdot h,G\cdot k)\in G\backslash H
    \times G\backslash H: G\cdot s(h)=G\cdot r(k)}.
\end{equation}
Using Lemma~\ref{lem-tech}, we get a well-defined map from $(G\backslash
H)\pairs$ to $G\backslash H$ sending $(G\cdot h, G\cdot k)\to (G\cdot
h)(G\cdot k)$.  We can also define $(G\cdot h)^{-1}=G\cdot h^{-1}$.

\begin{prop}
  \label{prop-groupoid}  Suppose that $G$ acts principally on $H$ by
  isomorphisms.   Let $(\guh)\pairs$ be as in \eqref{eq:14}.  Then
  with multiplication and inversion defined as above, and assuming $G$
  has an open range map, $\guh$ is a
  locally compact groupoid with unit space
  $(\guh)\units=G\backslash\ho$.   Then range and source maps on
  $\guh$ are given by $r(G\cdot h)=G\cdot r_{H}(h)$ and $s(G\cdot
  h)=G\cdot s_{H}(h)$.   If $H$ has an open range map, then so does
  $\guh$. 
\end{prop}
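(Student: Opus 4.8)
The plan is to verify the groupoid axioms directly, leaning on Lemma~\ref{lem-tech} for well-definedness of multiplication and on the structure established in Proposition~\ref{prop-proper-equiv} (together with Corollary~\ref{cor-principal-sc}) for the topological claims. First I would check that the multiplication on $(\guh)\pairs$ is well-defined and associative: well-definedness is exactly Lemma~\ref{lem-tech}, and associativity follows because, given three composable orbits, we may (using freeness and the moment map, as in the proof of Lemma~\ref{lem-tech}) choose representatives $h,k,l\in H_u$ with $s(h)=r(k)$, $s(k)=r(l)$, whence $(G\cdot h)(G\cdot k)(G\cdot l)=G\cdot(hkl)$ computed either way. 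Similarly $(G\cdot h)\inv=G\cdot h\inv$ is well-defined since the $G$-action commutes with inversion in $H$, and one checks $(G\cdot h)(G\cdot h)\inv=G\cdot r_H(h)$ and the analogous source identity, so every element is invertible with range $G\cdot r_H(h)$ and source $G\cdot s_H(h)$; the unit space is then $G\backslash\ho$. The identities $r(G\cdot h)=G\cdot r_H(h)$ and $s(G\cdot h)=G\cdot s_H(h)$ are immediate from this.

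Next I would address the topology. Give $\guh$ the quotient topology from the open quotient map $q\colon H\to\guh$ (open because $G$ has an open range map, by \cite{wil:toolkit}*{Proposition~2.12}). Since $\ho\subseteq H$ is a closed (hence locally closed) subset and $q$ restricts to the open quotient map $\ho\to G\backslash\ho$, the unit space sits inside $\guh$ as expected. Local compactness and Hausdorffness of $\guh$: here I would invoke that a free and proper $G$-space has locally compact Hausdorff orbit space — this is part of the standard theory of proper groupoid actions (e.g. the orbit space of a principal action is the base of a principal bundle), and $\ho$ being principal gives the Hausdorff property for $G\backslash\ho$, which then propagates to $\guh$ via the orbit-equivalence-relation argument. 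That is, $R=\set{(h,k):G\cdot h=G\cdot k}$ is the image of $G*H\to H\times H$, $(x,h)\mapsto(x\cdot h,h)$, which is proper and hence closed, so $\guh$ is Hausdorff; local compactness follows since $q$ is open and continuous and $H$ is locally compact.

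Then I would check continuity of multiplication and inversion. Inversion is the descent of the continuous map $h\mapsto h\inv$ through the open quotient map, hence continuous. For multiplication, I would first identify $(\guh)\pairs$ topologically: the map $H\pairs\to(\guh)\pairs$, $(h,k)\mapsto(G\cdot h,G\cdot k)$, is continuous, surjective, and open (openness because $q$ is open and, given a convergent net in $(\guh)\pairs$, one can lift it to $H\pairs$ after passing to a subnet using Fell's Criterion as in the proof of Lemma~\ref{iso} together with Corollary~\ref{cor-principal-sc} to control the group elements); the composite $H\pairs\to H\to\guh$, $(h,k)\mapsto G\cdot(hk)$, is continuous and constant on fibers, so it descends to a continuous multiplication on $(\guh)\pairs$. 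Finally, for the open-range-map claim, suppose $H$ has an open range map and let $W\subseteq\guh$ be open; then $q\inv(W)$ is open in $H$, so $r_H(q\inv(W))$ is open in $\ho$, and since $r_{\guh}\circ q = q|_{\ho}\circ r_H$ with $q|_{\ho}$ open, the set $r_{\guh}(W)=q|_{\ho}(r_H(q\inv(W)))$ is open in $G\backslash\ho$, as desired.

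I expect the main obstacle to be the \emph{openness of the multiplication map and of the map $H\pairs\to(\guh)\pairs$}: unlike the algebraic axioms, this requires genuinely using the principality hypothesis to lift convergent nets of orbits back to $H$ while simultaneously controlling the $G$-translates, which is precisely where Corollary~\ref{cor-principal-sc} and Fell's Criterion (as deployed in Lemma~\ref{iso}) do the work; everything else is bookkeeping once the right lifting lemma is in hand.
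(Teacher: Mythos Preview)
Your proposal is correct and follows essentially the same route as the paper: the algebraic axioms are handled via Lemma~\ref{lem-tech}, Hausdorffness and local compactness come from properness of the action, and the heart of the matter---lifting a convergent net of composable orbits back to $H\pairs$ by aligning the $G$-translates via Corollary~\ref{cor-principal-sc}---is exactly the paper's argument. The only cosmetic differences are that the paper runs the net argument directly to get continuity of multiplication (rather than first proving $H\pairs\to(\guh)\pairs$ is an open surjection and then descending), and proves openness of $r_{\guh}$ via Fell's Criterion rather than your commutative-square argument; both packagings are equivalent.
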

\begin{proof}
  It is routine to verify that $\guh$ satisfies axioms (a), (b), and
  (c) of \cite{wil:toolkit}*{Definition~1.2}.  Hence $\guh$ is a
  groupoid as claimed.  Furthermore $r_{\guh}(G\cdot h)=(G\cdot
  h)(G\cdot h)^{-1}=G\cdot hh^{-1}=G\cdot r(h)$.   Similarly
  $s_{\guh}(G\cdot h)=G\cdot s(h)$, and we have
  $(\guh)\units=G\backslash\ho$. 

  Since the action of $G$ on $H$ is proper, $\guh$ is locally compact
  Hausdorff by \cite{wil:toolkit}*{Proposition~2.18}.

  To see that $\guh$ is a topological groupoid, we need to verify that
  the operations are continuous.  To that end, suppose that $(G\cdot
  h_{i},G\cdot k_{i})\to (G\cdot h,G\cdot k)$ in $(\guh)\pairs$.
  There is no harm in assuming $s(h_{i})=r(k_{i})$ and that
  $s(h)=r(k)$.   Then we want to see that $G\cdot (h_{i}k_{i})\to
  G\cdot (hk)$ in $\guh$.  Since it will suffice to show that every
  subnet has a subnet converging to $G\cdot (hk)$, we can replace
  $\set{G\cdot (h_{i}k_{i})}$ with a subnet, relabel, and show that
  $\set{G\cdot (h_{i}k_{i})}$ has a convergent subnet.   Since $G$ has
  open range and source maps, the orbit map from $G$ to $\guh$ is
  open.  Hence we can pass to a subnet, relabel, and assume there are
  $x_{i},y_{i}\in G$ such that $x_{i}\cdot h_{i}\to h$ and $y_{i}\cdot
  k_{i}\to k$.  Letting $s_{H}(h_{i})=v_{i}=r_{H}(k_{i})$, we must have
  $x_{i},y_{i}\in G_{u_{i}}$ where $u_{i}=\rho(v_{i})$ as in the proof of
  Lemma~\ref{lem-tech}.

  Then we have $x_{i}\cdot v_{i}\to v$ while $y_{i}\cdot v_{i}\to v$
  where $s_{H}(h)=v=r_{H}(k)$.   
  But then $y_{i}\cdot v_{i}=(y_{i}x_{i}^{-1})x_{i}\cdot v_{i}\to v$.
  It follows from Corollary~\ref{cor-principal-sc} that
  $y_{i}x_{i}^{-1}\to \rho(v)$.   Thus $y_{i}\cdot
  h_{i}=(y_{i}x_{i}^{-1})x_{i}\cdot h_{i}\to h$.   But then
  $y_{i}\cdot (h_{i}k_{i})=(y_{i}\cdot h_{i})(y_{i}\cdot k_{i}) \to
  hk$.   Thus $G\cdot (h_{i}k_{i})\to G\cdot (hk)$ as required.   The
  continuity of inversion is similar, but more straightforward.

  To see that the range map on $\guh$ is open, we use Fell's
  Criterion.  Suppose that $G\cdot v_{i}\to G\cdot r_{H}(h)=r(G\cdot h)$.  Since
  the range map on $G$ is open, we can pass to a subnet, relabel, and
  assume that there are $x_{i}\in G$ such that $x_{i}\cdot v_{i}\to
  r_{H}(h)$.   Since $r_{H}$ is open, we can pass to another subnet
  and find $h_{i}\to h$ such that $r_{H}(h_{i})=x_{i}\cdot v_{i}$.
  Then $G\cdot h_{i}\to G\cdot h$ and $r(G\cdot h_{i})=G\cdot v_{i}$. 
\end{proof}

\begin{rem}
Observe that when $G$ acts principally and by isomorphisms on $H$,
the $G$-bundle map $H\to G\under H$
is a surjective groupoid homomorphism.
\end{rem}

\begin{example}
  \label{ex-g-act-quotient} Let $H$ be the action groupoid
  for the right action of $G$ on itself as in 
  \cite{hkqwsemi}*{Example~5.6}.
  Then $(x,y,xy)\mapsto y$ induces an isomorphism of $\guh$ onto~$G$.
\end{example}

\begin{prop}
  \label{prop-haar-guh} Suppose that $H$ has a Haar system $\lambdah$,
  that $G$ acts principally on $H$ by isomorphisms, and that the
  action is invariant as in 
  \cite{hkqwsemi}*{Definition~5.7}.  Then
  $\guh$ has a Haar system $\lambdaguh$ given by
  \begin{equation}
    \label{eq:16}
    \int_{\guh} g(G\cdot h)\,d\lambdaguh^{G\cdot v}(G\cdot h) =
    \int_{H} g(G\cdot h)\,d\lambdah^{v}(h)
  \end{equation}
for $g\in C_{c}(\guh)$ and $v\in\ho$.
\end{prop}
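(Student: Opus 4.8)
The plan is to realize $\lambdaguh^{G\cdot v}$ as the push-forward of $\lambdah^{v}$ along the quotient map $q\colon H\to\guh$, and then to read off the Haar-system axioms from the corresponding properties of $\lambdah$ together with the invariance hypothesis. Everything will hinge on one \emph{fibrewise} statement: for every $v\in\ho$, the restriction $q_{v}:=q|_{r_{H}^{-1}(v)}$ is a homeomorphism of $r_{H}^{-1}(v)$ onto $r_{\guh}^{-1}(G\cdot v)$. Granting this, I would set $\lambdaguh^{G\cdot v}:=(q_{v})_{*}\lambdah^{v}$, a positive Radon measure on $r_{\guh}^{-1}(G\cdot v)$ with full support because $\lambdah^{v}$ has; and since, for $g\in C_{c}(\guh)$, the function $g\circ q$ is continuous and its restriction to $r_{H}^{-1}(v)$ has compact support (being $g$ composed with the homeomorphism $q_{v}$), one gets $\int_{\guh}g\,d\lambdaguh^{G\cdot v}=\int_{r_{H}^{-1}(v)}g(q(h))\,d\lambdah^{v}(h)$, which is exactly \eqref{eq:16}.

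To prove the fibrewise homeomorphism I would check four properties of $q_{v}$. It is injective: if $x\cdot h=h'$ with $r_{H}(h)=r_{H}(h')=v$ then $x\cdot v=r_{H}(x\cdot h)=r_{H}(h')=v$ (the action, being by isomorphisms, commutes with $r_{H}$), so $x=\rho(v)$ by freeness and $h=h'$. It is onto $r_{\guh}^{-1}(G\cdot v)$: if $G\cdot r_{H}(h')=G\cdot v$, write $r_{H}(h')=x\cdot v$; then $x^{-1}\cdot h'\in r_{H}^{-1}(v)$ and $q(x^{-1}\cdot h')=q(h')$. It is continuous, being a restriction of $q$. Finally it is proper: if $h_{i}$ is a net in $r_{H}^{-1}(v)$ with $q(h_{i})$ lying in a fixed compact subset of $r_{\guh}^{-1}(G\cdot v)$, then, after a subnet, $q(h_{i})\to G\cdot h$ where we may take $r_{H}(h)=v$; since $q$ is open ($G$ has an open range map, \cite{wil:toolkit}*{Proposition~2.12}), Fell's criterion (\cite{hkqwsemi}*{Lemma~2.1}) gives, after a further subnet, elements $x_{i}\in G$ with $x_{i}\cdot h_{i}\to h$; applying $r_{H}$ yields $x_{i}\cdot v\to v$, so $x_{i}\to\rho(v)$ by \corref{cor-principal-sc}, whence $h_{i}=x_{i}^{-1}\cdot(x_{i}\cdot h_{i})\to h\in r_{H}^{-1}(v)$. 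A continuous proper bijection onto the locally compact Hausdorff space $r_{\guh}^{-1}(G\cdot v)$ (Hausdorff by \propref{prop-groupoid}) is a homeomorphism.

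With the fibrewise homeomorphism established, most of what remains is formal. For well-definedness of $G\cdot v\mapsto\lambdaguh^{G\cdot v}$: if $v'=x\cdot v$, then $h\mapsto x\cdot h$ is a homeomorphism of $r_{H}^{-1}(v)$ onto $r_{H}^{-1}(v')$ carrying $\lambdah^{v}$ to $\lambdah^{v'}$ by invariance of the action (\cite{hkqwsemi}*{Definition~5.7}) and satisfying $q_{v'}(x\cdot h)=q_{v}(h)$, so pushing forward gives $(q_{v'})_{*}\lambdah^{v'}=(q_{v})_{*}\lambdah^{v}$. For left invariance: the remark preceding \exref{ex-g-act-quotient} records that $q$ is a groupoid homomorphism, so for $G\cdot k\in\guh$ — taking the representative with $s_{H}(k)=v$ — one has $(G\cdot k)(G\cdot h)=G\cdot(kh)$ whenever $r_{H}(h)=v$; hence left translation by $G\cdot k$ on $r_{\guh}^{-1}(G\cdot v)$ corresponds, via $q_{v}$ and $q_{r_{H}(k)}$, to left translation $L_{k}\colon h\mapsto kh$ of $r_{H}^{-1}(v)$ onto $r_{H}^{-1}(r_{H}(k))$, and pushing the Haar-system identity $(L_{k})_{*}\lambdah^{v}=\lambdah^{r_{H}(k)}$ through the resulting commuting square yields left invariance for $\lambdaguh$.

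The step I expect to be the main obstacle is continuity of $G\cdot v\mapsto\int g\,d\lambdaguh^{G\cdot v}$ on $(\guh)\units=G\backslash\ho$ for $g\in C_{c}(\guh)$. Since $\ho\to G\backslash\ho$ is a quotient map, it suffices to show $v\mapsto\int_{r_{H}^{-1}(v)}g(q(h))\,d\lambdah^{v}(h)$ is continuous on $\ho$, and this does \emph{not} follow directly from $\lambdah$ being a Haar system because $g\circ q$ need not be compactly supported on $H$. I would circumvent this by first proving the sub-claim that, for each $v_{0}\in\ho$, there are a neighborhood $V$ of $v_{0}$ and a compact $L\subseteq H$ with $q^{-1}(\supp g)\cap r_{H}^{-1}(v)\subseteq L$ for all $v\in V$; its proof is a net argument of the same flavour as the properness step above (were it to fail, a net $h_{i}\in q^{-1}(\supp g)\cap r_{H}^{-1}(v_{i})$ with $v_{i}\to v_{0}$ and $h_{i}$ leaving every compact set would, after applying Fell's criterion and \corref{cor-principal-sc}, be forced to converge, a contradiction). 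Then, choosing $\psi\in C_{c}(H)$ with $0\le\psi\le1$ and $\psi\equiv1$ on $L$, one has $g(q(h))\psi(h)=g(q(h))$ for all $h\in r_{H}^{-1}(v)$ and $v\in V$, so $\int_{r_{H}^{-1}(v)}g(q(h))\,d\lambdah^{v}(h)=\int_{H}(g\circ q)\psi\,d\lambdah^{v}$, whose right-hand side is continuous in $v$ on $V$ because $(g\circ q)\psi\in C_{c}(H)$ and $\lambdah$ is a Haar system; as $v_{0}$ was arbitrary, this gives the desired continuity on $\ho$.
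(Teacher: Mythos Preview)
Your argument is correct and self-contained, but it takes a genuinely different route from the paper. The paper's proof is two sentences long: it observes that $\lambdah$ is an equivariant $r_{H}$-system (in the sense of \cite{wil:toolkit}) and then invokes \cite{wil:toolkit}*{Proposition~3.14}, which packages exactly the descent-to-quotient machinery you have built by hand --- the fibrewise homeomorphism, well-definedness, and the continuity argument via a local compactness/cutoff trick. The paper then dismisses left invariance as ``routine''. What your approach buys is transparency: you make explicit why the push-forward is well defined (the invariance hypothesis), why it has full support (the fibrewise homeomorphism), and how continuity follows despite $g\circ q$ not being compactly supported on $H$; the paper's approach buys brevity by outsourcing all of this to a general lemma about equivariant systems of measures. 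One minor remark: in your continuity sub-claim you cite \corref{cor-principal-sc}, but the limits $v_{i}\to v_{0}$ and $x_{i}\cdot v_{i}\to r_{H}(h')$ need not coincide, so you actually want the more general item~\ref{PA4} of \propref{prop-proper-equiv}; the conclusion is unaffected.
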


\begin{proof}
  Let $r_{H}:H\to \ho$.  Then $\lambdah=\set{\lambdah^{v}}_{v\in\ho}$
  is an equivariant $r_{H}$-system.   Thus
  $\lambdaguh=\set{\lambdaguh^{G\cdot v}}_{G\cdot v\in \guh} $ is
  $r_{\guh}$-system by \cite{wil:toolkit}*{Proposition~3.14}.  The
  rest is routine.
\end{proof}

\begin{defn}
If an action of $G$ on a Fell bundle $p:\AA\to H$ is
principal as a bundle action and is also an action by isomorphisms, we
say that $G$ acts \emph{principally and by isomorphisms}.
\end{defn}


Assume that $p:\AA\to H$ is a Fell bundle and that $G$ acts on $\AA$
principally and by isomorphisms as above.  Then $\guh$ is a locally
compact Hausdorff groupoid by Proposition~\ref{prop-groupoid}.  Since
$p$ is equivariant, we have a continuous map
\begin{align}
  \label{eq:19}
  \pb:\gua\to\guh
\end{align}
given by $G\cdot a\mapsto G\cdot p(a)$.  Here $\gua$ is the orbit
space with the quotient topology.\footnote{Note that $\AA$ is almost
  never locally compact---the relative topology on the fibres is the
  Banach space topology---and may not even be Hausdorff (see
  \cite{danacrossed}*{Example~C.27}).}  Using Fell's Criterion, we can
see that $\pb$ is also open: suppose
$G\cdot h_{i}\to G\cdot p(a)=\pb(a)$ in $\guh$.  Then we may as well
assume that $h_{i}\to p(a)$.  Since $p$ is open, we can pass to
subnet, relabel, and assume that there are $a_{i}\to a$ in $\AA$ such
that $p(a_{i})=h_{i}$.  Then $G\cdot a_{i}\to G\cdot a$ and
$\pb(G\cdot a_{i})=G\cdot h_{i}$.

To see that $\pb:\gua\to\guh$ is an upper semicontinuous Banach
bundle, we first have to equip each fibre $\pb^{-1}(G\cdot h)$ with a
Banach space structure.  But $a\mapsto G\cdot a$ is a bijection of
$p^{-1}(h')=A(h')$ onto $\pb^{-1}(G\cdot h)$ for any $h'\in G\cdot h$.
Since $G$ acts by isomorphisms as in
\cite{hkqwsemi}*{Definition~5.9(b)},
we can use this map to impose a
well-defined Banach space structure on $\pb^{-1}(G\cdot h)$:
define $\| G\cdot a\|=\|a\|$, $G\cdot a+ G\cdot b= G\cdot (a+b)$, and
$\alpha G\cdot a=G\cdot (\alpha a)$.  Since $G$ acts by isomorphisms,
these operations are independent of our choice of $h'\in G\cdot h$.

\begin{prop}
  \label{lem-gua-usc} Suppose that $p:\AA\to H$ is a Fell bundle on
  which $G$ acts principally and by isomorphisms.  Let $\pb:\gua\to\guh$
  be as above.  Then the 
map
  $\pb:\gua\to\guh$ 
  introduced above
  is a Fell bundle with operations
  $(G\cdot a)(G\cdot b)=G\cdot (ab)$ provided $s(a)=r(b)$, and
  $(G\cdot a)^{*}=G\cdot a^{*}$.
\end{prop}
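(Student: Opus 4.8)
The plan is to verify in turn that the algebraic operations on $\gua$ are well-defined, that they make each fibre $B(G\cdot h)=\pb\inv(G\cdot h)$ into a Banach space over which the relevant inequalities hold, and that all the structure maps are continuous — which is where the real content lies. The well-definedness of the multiplication comes essentially from \lemref{lem-tech}: if $s(a)=r(b)$ and $x,y\in G$ are such that $s(x\cdot a)=r(y\cdot b)$, then applying $p$ gives $s(x\cdot p(a))=r(y\cdot p(b))$, so $x\cdot s(p(a))=y\cdot r(p(b))$; since $G$ acts freely and $s(p(a))=r(p(b))$ we get $x=y$, whence $(x\cdot a)(x\cdot b)=x\cdot(ab)\in G\cdot(ab)$. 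So $(G\cdot a)(G\cdot b):=G\cdot(ab)$ is well-defined, and because $G$ acts by isomorphisms the product $x\cdot a\mapsto G\cdot(x\cdot a)(x\cdot b)$ is bilinear from $B(G\cdot h)\times B(G\cdot k)$ into $B(G\cdot hk)$. The involution $G\cdot a\mapsto G\cdot a^*$ is well-defined for the same reason (and because $p(a^*)=p(a)\inv$, matching the inversion on $\guh$), and it is conjugate-linear and isometric since each fibre map $a\mapsto x\cdot a$ is. The algebraic Fell-bundle axioms — associativity, $\|ab\|\le\|a\|\|b\|$, $\|a^*a\|=\|a\|^2$, $(a^*a)$ positive in $B(s(a))^*B(s(a))$ — then transfer fibrewise from $\AA$ via the Banach-space (indeed $C^*$-algebra, on the fibres over units) isomorphisms $A(h')\to B(G\cdot h)$, using that $G$ acts by isomorphisms so that $B(G\cdot v)=G\cdot A(v)$ inherits a well-defined $C^*$-structure.

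**The substantive part is continuity**, and the template is already visible in the paragraph preceding the proposition, where openness of $\pb$ is proved by Fell's Criterion, and in the proof of \propref{prop-groupoid}, where continuity of multiplication on $\guh$ is handled. I would first record that $\pb$ is a continuous open surjection and that each fibre is a Banach space, then check upper semicontinuity of the norm: if $G\cdot a_i\to G\cdot a$ in $\gua$, lift along the open orbit map (using that $G$ has open range and source maps, so $\AA\to\gua$ is open, exactly as in \propref{prop-groupoid}) to get, after passing to a subnet and relabelling, $x_i\in G$ with $x_i\cdot a_i\to a$ in $\AA$; then $\|G\cdot a_i\|=\|x_i\cdot a_i\|\to\|a\|=\|G\cdot a\|$ — in fact we get continuity of the norm on $\gua$, not merely upper semicontinuity, which is consistent with the situation for quotients by free and proper actions. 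Continuity of addition (on the fibred product $\gua*\gua$ over $\guh$), of scalar multiplication, of multiplication, and of involution all follow the same recipe: pass to a subnet, use openness of the orbit map to lift convergent nets in $\gua$ to convergent nets in $\AA$, arrange (exactly as in the proof of \propref{prop-groupoid}, via \corref{cor-principal-sc}) that a single group element $y_i$ simultaneously carries both lifts to the correct limit, apply the corresponding continuous operation in $\AA$, and project back down.

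**The main obstacle** is the one already met in \propref{prop-groupoid}: when we lift two convergent nets $G\cdot a_i\to G\cdot a$ and $G\cdot b_i\to G\cdot b$ with $s(a_i)=r(b_i)$, the independent lifts give $x_i\cdot a_i\to a$ and $y_i\cdot b_i\to b$ with generally $x_i\ne y_i$, so $(x_i\cdot a_i)(y_i\cdot b_i)$ need not even be defined. The fix is identical to the one used there: since $s(a_i)=r(b_i)$ we have $x_i,y_i\in G_{u_i}$ for a common base point, and looking at the images $x_i\cdot s(p(a_i))$ and $y_i\cdot s(p(a_i))$ — both converging to $s(p(a))$ — \corref{cor-principal-sc} forces $y_ix_i\inv\to\rho(s(p(a)))$, so $y_i\cdot a_i=(y_ix_i\inv)x_i\cdot a_i\to a$ as well; now $y_i\cdot a_i$ and $y_i\cdot b_i$ are composable and $(y_i\cdot a_i)(y_i\cdot b_i)=y_i\cdot(a_ib_i)\to ab$ by continuity of multiplication in $\AA$, giving $G\cdot(a_ib_i)\to G\cdot(ab)$. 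Everything else is the routine verification of the Fell-bundle axioms fibrewise, for which the existing infrastructure — principally \lemref{lem-tech}, \corref{cor-principal-sc}, and \propref{prop-groupoid} — suffices, so I would state those checks briefly and refer the reader to the analogous arguments in \cite{kmqw2} for the group case.
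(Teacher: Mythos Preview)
Your proposal is correct and follows essentially the same strategy as the paper's proof: lift nets along the open orbit map, align the two lifts via \corref{cor-principal-sc} exactly as in \propref{prop-groupoid}, and transfer the Fell-bundle axioms fibrewise. One small overclaim: your assertion that the norm on $\gua$ is in fact \emph{continuous} is unjustified --- from $x_i\cdot a_i\to a$ in the upper-semicontinuous bundle $\AA$ you only obtain $\limsup\|x_i\cdot a_i\|\le\|a\|$, which is exactly what (B1) requires but does not yield full continuity of the quotient norm.
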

\begin{proof}
  First, to see that $\bar p:\gua\to\guh$ is an upper-semicontinuous Banach bundle,
  we need to verify axioms (B1)--(B4) of
  \cite{hkqwsemi}*{Definition~2.4}.  This can be done almost exactly as
  in \cite{kmrw}*{Proposition~2.15} with minor alterations.  To verify
  (B1), suppose that $G\cdot a_{i}\to G\cdot a$ with
  $\|a_{i}\|\ge \epsilon>0$ for all $i$.  Then, after passing to a
  subnet and relabeling, we can assume that there are $x_{i}\in G$
  such that $x_{i}\cdot a_{i}\to a$.  Since the action of $G$ must be
  isometric, we have $\|a\|\ge\epsilon$.  This verifies (B1).

  (B2): Suppose that
  $(G\cdot a_{i},G\cdot b_{i})\to (G\cdot a,G\cdot b)$ in
  $\gua*\gua$. Then
  $\pb(G\cdot a_{i})= G\cdot p(a_{i})=G\cdot p(b_{i})= \pb(G\cdot
  b_{i})$ for all $i$.  By adjusting $a_{i}$ and $b_{i}$ within their
  orbits, we can assume that $p(a_{i})=p(b_{i})$ and $p(a)=p(b)$.
  Thus we need to see that $G\cdot (a_{i}+b_{i})\to G(a+b)$ in $\gua$.
  If this is not the case, then we can pass to a subnet, relabel, and
  assume that there is a neighborhood $U$ of $G\cdot (a+b)$ that
  contains no $G\cdot (a_{i}+b_{i})$.  Since $G$ has open
  range and source maps, the orbit map from $\AA$ to $\gua$ is open
  \cite{wil:toolkit}*{Proposition~2.12}.  Hence we can pass to another
  subnet, relabel, and assume that there are $x_{i}\in G$ such that
  $x_{i}\cdot (a_{i}+b_{i})\to (a+b)$.  But then
  $G\cdot (a_{i}+b_{i})\to G\cdot (a+b)$ which leads to a
  contradiction.  This proves (B2).

  The validity of (B3) and (B4) follow by similar, albeit easier,
  arguments.
  
    To see that the indicated operations give a Fell bundle,
    we first need to check that the multiplication is well-defined:
    Suppose that $(G\cdot a,G\cdot b)\in\gua\pairs$; that is,
$(G\cdot p(a),G\cdot p(b))\in\guh\pairs$.  As in the proof of
Lemma~\ref{lem-tech}, we can adjust $p(a)$ and $p(b)$ in their orbits,
and hence $a$ and $b$ in theirs, so that $(p(a),p(b))\in H\pairs$.
Then the ``product'' $(G\cdot a)(G\cdot b)$ is the orbit
$G\cdot (ab)$.  Hence, just as in Proposition~\ref{prop-groupoid}, we
get a well-defined map from $\gua\pairs\to\gua$ sending
$(G\cdot a,G\cdot b)$ to $G\cdot (ab)$.
    Properties~(FB1)--(FB3)
  of 
  \cite{hkqwsemi}*{Definition~2.5}
  are
  routine.  Since $a\mapsto G\cdot a$ is an isomorphism of $A(h)$ onto
  $(pb)^{-1}(G\cdot h)$, 
  (FB4)--(FB5)
  follow since $A(h)$
  is given to be an $A(r(h))\sme A(s(h))$-\ib.
\end{proof}

\begin{lemma}
  \label{G on pullback}
  Let a \lchs\ $T$ have commuting actions by \lcg s $G$ and $K$ such
  that the moment map $\rho_K:T\to K\units$ is a principal $G$-bundle.
  Then we can view $\ac KT=K*T$ as the pullback by
  $s_{K}:K\to K\units$.
  \begin{enumerate}[\rm (a),nosep]
  \item The pullback action $G\actleft \ac KT$ given by
    $x\cdot (k,t)=(k,x\cdot t)$ is principal and by isomorphisms.
  \item The map $\theta:G\backslash(\ac KT)\to K$ given by
    $\theta([k,t])=k$ is a groupoid isomorphism.
  \item The right action of $K$ on $\ac KT$ given by $(k,t)\cdot \ell=(k\ell,\ell^{-1}\cdot t)$, with moment map
    $\sigma(k,t)=s(k)$, is principal.
  \end{enumerate}
\end{lemma}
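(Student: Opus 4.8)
The plan is to build everything out of the Palais characterization (\thmref{palais}) together with the elementary facts about principal actions recorded in Lemmas~\ref{diagonal} and~\ref{pullback} and in Proposition~\ref{prop-groupoid}. First I would make the identification $\ac KT=K*T$ explicit: since $\rho_K:T\to K\units$ is (by hypothesis) a principal $G$-bundle over $K\units$, and $s_K:K\to K\units$ is continuous, the pullback $s_K^*T=\set{(k,t):s_K(k)=\rho_K(t)}$ is exactly the set underlying the semidirect product $\ac KT$; the topology agrees because both are subspace topologies from $K\times T$. Note that $G$ acts only on the $T$-coordinate, $K$ acts on both coordinates as in~(c), and these two actions commute because the $G$- and $K$-actions on $T$ commute.

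For part~(a): the action $x\cdot(k,t)=(k,x\cdot t)$ is precisely the pullback of the principal $G$-action $G\actleft T$ along $s_K$, so it is principal by Lemma~\ref{pullback} (its base space is $K$, on which $G$ acts via $k\mapsto$ the $G$-orbit structure pulled back through $s_K$, but more simply: $G$ fixes the $k$-coordinate, so properness and freeness on $\ac KT$ follow directly from properness and freeness on $T$ using item~\ref{PA4} of Proposition~\ref{prop-proper-equiv}, exactly as in the proof of Lemma~\ref{diagonal}). That it is an action by isomorphisms is the statement that $x\cdot$ commutes with the groupoid operations of $\ac KT$; this is immediate from the formulas, since $x\cdot\bigl((k,t)(\ell,s)\bigr)=x\cdot(k\ell,s)=(k\ell,x\cdot s)$ when we use the groupoid structure on $\ac KT=K\sdp T$ (here I am using the conventions from \cite{hkqwsemi} for the semidirect product groupoid $K\sdp T$), and similarly $x\cdot(k,t)^{-1}=(k,t)^{-1}$-conjugated appropriately—a routine check.

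For part~(b): the map $\theta([k,t])=k$ is well-defined because $G$ fixes the $k$-coordinate, and it is a groupoid homomorphism by construction of the quotient groupoid $G\backslash(\ac KT)$ from Proposition~\ref{prop-groupoid}. Bijectivity: surjectivity is clear since $\rho_K$ is onto (it is a principal $G$-bundle, hence a quotient map onto $K\units$, and $s_K$ is onto); injectivity follows because if $(k,t)$ and $(k,t')$ have the same first coordinate then $\rho_K(t)=s_K(k)=\rho_K(t')$, so $t$ and $t'$ lie in the same $G$-orbit (freeness gives the unique $x$ with $t'=x\cdot t$), whence $[k,t]=[k,t']$. Continuity of $\theta$ is clear, and continuity of the inverse follows from the fact that the orbit map $\ac KT\to G\backslash(\ac KT)$ is open (Proposition~\ref{prop-groupoid}, since $G$ has an open range map) together with openness of a local section of $\rho_K$ obtained from Fell's Criterion—alternatively, one invokes Example~\ref{ex-g-act-quotient} or argues directly as in the proof of Proposition~\ref{prop-groupoid} that $\theta$ is open. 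So $\theta$ is a groupoid isomorphism.

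For part~(c): the right $K$-action $(k,t)\cdot\ell=(k\ell,\ell^{-1}\cdot t)$ with moment map $\sigma(k,t)=s_K(k)$ is the standard right translation action of $K$ on the semidirect product $K\sdp T$ (viewing $K$ as acting on itself on the right and correspondingly on $T$); freeness is immediate since $(k\ell,\ell^{-1}\cdot t)=(k,t)$ forces $k\ell=k$, hence $\ell=s_K(k)$. Properness is the content of item~\ref{PA4} of Proposition~\ref{prop-proper-equiv}: if $(k_i,t_i)\to(k,t)$ and $(k_i\ell_i,\ell_i^{-1}\cdot t_i)\to(k',t')$, then $k_i\to k$ and $k_i\ell_i\to k'$ in $K$, so $\ell_i=k_i^{-1}(k_i\ell_i)\to k\inv k'$ in $K$ by continuity of multiplication and inversion in $K$ (using that composability is preserved in the limit). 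Hence $\set{\ell_i}$ converges, and the action is principal. I expect the main obstacle to be the continuity of $\theta\inv$ in part~(b)—that is, showing $\theta$ is open—since that is where one must actually use the local-triviality-free version of Palais' theorem via Fell's Criterion rather than a formula; everything else reduces to bookkeeping with the defining formulas and the sequential criterion~\ref{PA4}.
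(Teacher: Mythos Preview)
Your proposal is correct and follows essentially the same approach as the paper. Two small points where the paper is tidier: for part~(b), rather than arguing openness of $\theta$ via Fell's Criterion or local sections, the paper simply observes that the commuting triangle $\pi_1=\theta\circ q$ has both $\pi_1$ (the pullback of the open map $\rho_K$) and the orbit map $q$ open, so $\theta$ is automatically open---this sidesteps the obstacle you flagged. For part~(c), the paper does not run the \ref{PA4} argument by hand but just invokes \lemref{diagonal} (adapted to right actions), viewing the right $K$-action on $\ac KT$ as the diagonal of right translation on $K$ with the right $K$-action $t\cdot\ell=\ell^{-1}\cdot t$ on $T$; since right translation on $K$ is principal, the diagonal action is too.
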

\begin{proof}
  For part (a), it follows from \lemref{pullback} that the pullback
  $G$-action is principal, and it is easy to see that the action is by
  isomorphisms:
  \begin{align}
    \label{eq:24}
    x\cdot (k\ell,t)
    &= (k\ell,x\cdot t) = (k,\ell\cdot (x\cdot t))(\ell,x\cdot t) \\
    &= (k,x\cdot (\ell\cdot t))(\ell,x\cdot t) = \bigl(x\cdot (k,\ell\cdot
      t)\bigr) \bigl(x\cdot (\ell,t)\bigr).
  \end{align}

  For part (b), note that the coordinate projection
  $\pi_1:(k,t)\mapsto k$ factors through $G\backslash (\ac KT)$ and we
  get a continuous bijection $\theta$ making the diagram
  \begin{equation}
    \label{eq:25}
    \begin{tikzcd}
      & \ac KT \arrow[dl,"\pi_{1}",swap] \arrow[dr,"q"] \\
      K && \arrow[ll,"\theta",swap] G\backslash(\ac KT)
    \end{tikzcd}
  \end{equation}
  commute.  
  Then $\theta$ is a homeomorphism because $\pi_{1}$ and $q$ are open, and is a groupoid isomorphism because $\pi_{1}$ and $q$ are groupoid homomorphisms.
  
  Part~(c) is an immediate consequence of Lemma~\ref{diagonal}
  (adapted to right actions).
\end{proof}

Our next result shows that the situation in Lemma~\ref{G on pullback}
is generic for principal groupoid actions.  This result
parallels the corresponding result in the group case proved in
\cite{kmqw2}*{Theorem~A.15}.  Note that the unit space of the action
groupoid $\ac HT$ can be identified with $T$.

\begin{thm}\label{classify}
  Let $G$ and $H$
  be locally compact groupoids, let $G$ act principally and by
  isomorphisms on $H$, and let $K=G\under H$ be the quotient groupoid.
  Then there is an action of $K$ on $H\units$ such that
  \begin{enumerate}[\rm (a),nosep]
  \item $H$ is $G$-equivariantly isomorphic to the action groupoid
    $\ac K{H\units}$, and
  \item the restricted action $G\actleft H\units$ commutes with the
    action of $K$.
  \end{enumerate}
\end{thm}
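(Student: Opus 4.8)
\textbf{Proof proposal for Theorem~\ref{classify}.}

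The plan is to realize the promised $K$-action on $\ho$ as the natural action of the quotient groupoid $K = G\under H$ on $G\under\ho$, and then to identify $G\under\ho$ with $\ho$ itself using that $G$ acts principally on the space $H$ (hence on its unit space $\ho$). First I would observe that, since $G$ acts principally and by isomorphisms on $H$, the restriction of the action to $\ho$ is itself a principal $G$-action on a locally compact Hausdorff space; the orbit map $\rho_K\colon \ho \to G\under\ho = K\units$ is then a principal $G$-bundle by Proposition~\ref{prop-groupoid} and the surrounding discussion. The groupoid $K=G\under H$ acts on its own unit space $K\units = G\under\ho$ in the canonical way, $(G\cdot h)\cdot(G\cdot r_H(k)) = G\cdot s_H(h)$ whenever $G\cdot r_H(h) = G\cdot r_H(k)$ — more precisely the range-and-source action of a groupoid on its unit space. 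Pulling this back along the principal $G$-bundle $\rho_K\colon\ho\to K\units$, and invoking Lemma~\ref{G on pullback} with $T=\ho$, $K=K$, we obtain commuting actions of $G$ and $K$ on $K*\ho$ and a $G$-equivariant groupoid isomorphism $\theta\colon G\under(K*\ho)\iso K$. So the real content is to produce, on $\ho$ itself, a $K$-action making $\ho\cong K*\ho$ as $G$-spaces and $H\cong\ac K\ho$ as groupoids.

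The key construction is the map I would use to identify $H$ with the action groupoid $K*\ho = \ac K\ho$. Define $\Phi\colon H\to K*\ho$ by $\Phi(h) = (G\cdot h,\, s_H(h))$; this lands in $K*\ho$ because $s_K(G\cdot h) = G\cdot s_H(h) = \rho_K$-class is exactly the moment-map condition for the pullback. I would check that $\Phi$ is continuous (immediate, since $h\mapsto G\cdot h$ and $h\mapsto s_H(h)$ are continuous), $G$-equivariant for the pullback $G$-action $x\cdot(G\cdot h, v) = (G\cdot h, x\cdot v)$ on $K*\ho$ — here one uses $\Phi(x\cdot h) = (G\cdot(x\cdot h), s_H(x\cdot h)) = (G\cdot h, x\cdot s_H(h)) = x\cdot\Phi(h)$, using that $G$ acts by isomorphisms so $s_H(x\cdot h) = x\cdot s_H(h)$ and $G\cdot(x\cdot h) = G\cdot h$ — and a groupoid homomorphism once $K*\ho$ is given the action-groupoid structure from the (yet to be pinned down) $K$-action on $\ho$. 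To make $\Phi$ a homomorphism, the $K$-action on $\ho$ is forced: for $k = G\cdot h\in K$ and $v = s_H(h')$ with $G\cdot v = s_K(k)$, we must set $k\cdot v = r_H(h'')$ where $h''$ is the unique element of $G\cdot h$ with $s_H(h'') = v$ — existence and uniqueness of $h''$ come from freeness of the $G$-action on $H$ together with $G\cdot s_H(h) = G\cdot v$. One then checks this is a well-defined continuous $K$-action on $\ho$ (well-definedness uses freeness; continuity uses openness of the orbit maps and Fell's criterion, exactly as in the proofs of Proposition~\ref{prop-groupoid} and Lemma~\ref{G on pullback}).

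With the $K$-action in hand, part~(a) reduces to showing $\Phi$ is a $G$-equivariant groupoid isomorphism. Injectivity of $\Phi$ follows from freeness: if $G\cdot h = G\cdot h'$ and $s_H(h) = s_H(h')$, then $h' = x\cdot h$ for a unique $x$, and $s_H(h') = s_H(h)$ forces $x\cdot s_H(h) = s_H(h)$, so $x$ is a unit and $h = h'$. Surjectivity: given $(G\cdot h, v)\in K*\ho$ with $G\cdot v = G\cdot s_H(h)$, replace $h$ by the unique representative with source $v$ to get a preimage. That $\Phi$ is open (equivalently a homeomorphism onto $K*\ho$) I would get from Fell's criterion together with openness of the orbit map $H\to G\under H$, in the same style as Lemma~\ref{iso} and the $\pb$-openness argument in the excerpt; alternatively, since both $H$ and $\ac K\ho$ are principal $G$-bundles over $K\units = G\under\ho$ (the former via $h\mapsto G\cdot s_H(h)$, which is $G$-invariant, the latter via $\pi_1$ followed by... — one must be slightly careful here, so I would instead apply Lemma~\ref{iso} to the $G$-bundles $H$ and $\ac K\ho$ over the common base $G\under\ho$, with $\Phi$ the $G$-bundle morphism over the identity, concluding $\Phi$ is an isomorphism). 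Part~(b), that $G\actleft\ho$ commutes with $K\actleft\ho$, is then a direct computation: $x\cdot(k\cdot v) = x\cdot r_H(h'') = r_H(x\cdot h'')$ and $k\cdot(x\cdot v)$ is computed from the unique representative of $G\cdot h = G\cdot(x\cdot h'')$ with source $x\cdot v$, which is precisely $x\cdot h''$; both equal $r_H(x\cdot h'')$.

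The main obstacle I expect is not any single deep step but rather the bookkeeping around the $K$-action on $\ho$: pinning down precisely why the assignment $(G\cdot h, v)\mapsto r_H(h'')$ is well-defined and continuous, and verifying it genuinely is a groupoid action (unit and associativity axioms) compatible with the action-groupoid structure. All of this hinges on the freeness of $G\actleft H$ (for well-definedness) and on the openness of orbit maps plus Fell's criterion (for continuity), so it is structurally identical to arguments already carried out in Proposition~\ref{prop-groupoid} and Lemma~\ref{G on pullback} — the difficulty is organizational rather than conceptual. A secondary subtlety is making sure the identification $G\under\ho \cong K\units$ and the pullback picture of Lemma~\ref{G on pullback} are applied to the correct $K$-action (the one just constructed), so that part~(b) is not circular; I would handle this by constructing the $K$-action first, \emph{then} invoking Lemma~\ref{G on pullback} only as a consistency check / source of the isomorphism $\theta$, rather than as the definition.
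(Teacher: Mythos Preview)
Your approach is essentially the paper's: your map $\Phi(h)=(G\cdot h,\,s_H(h))$ is exactly the paper's $\theta_s$, and your formula $k\cdot v=r_H(h'')$ for the $K$-action on $\ho$ is exactly what the paper obtains. The one organizational difference worth noting is that the paper applies the Palais-type \thmref{palais} \emph{twice}---once to the principal $G$-bundle morphism $(s_H,s_K):(H,q,K)\to(\ho,q|,K\units)$ and once to $(r_H,r_K)$---yielding two canonical $G$-bundle isomorphisms $\theta_s:H\to K*_s\ho$ and $\theta_r:H\to K*_r\ho$, and then reads the $K$-action off from the homeomorphism $\theta=\theta_r\circ\theta_s^{-1}$, which necessarily has the form $\theta(k,u)=(k,k\cdot u)$. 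This device buys continuity of $(k,u)\mapsto k\cdot u$ for free, since it is a coordinate of a composite of homeomorphisms, so the direct Fell's-criterion verification you anticipate as the ``main obstacle'' is bypassed entirely. Your alternative route via \lemref{iso} is the right instinct and is exactly what underlies \thmref{palais}, but the common base must be $K=G\under H$ (with $H\to K$ the quotient map and $\ac K{\ho}\to K$ the first projection), not $K\units=G\under\ho$: the map $h\mapsto G\cdot s_H(h)$ is $G$-invariant but its fibers are not single $G$-orbits, so it is not a principal $G$-bundle projection.
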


\begin{proof}
  Let $q:H\to K$ be the quotient map, and $q|:H\units\to K\units$ the
  restriction to unit spaces.  We have a commuting diagram
  \begin{equation}
    \label{eq:26}
    \begin{tikzcd}
      \ho\arrow[d,"q|",swap] & H \arrow[l,"r_{H}",swap] \arrow[r,"s_{H}"]
      \arrow[d,"q"] &
      \ho \arrow[d,"q|"] \\
      K\units & K \arrow[l,"r_{K}"] \arrow[r,"s_{K}",swap] & K\units
    \end{tikzcd}
  \end{equation}
  where $(s_{H},s_{K})$ and $(r_{H},r_{K})$ are principal $G$-bundle
  morphisms.  So by \thmref{palais}, we can form the pullbacks
  $r_{K}^{*}K\units=K*_{r}\ho=\set{(k,u):r_{K}(u)=q|(u)}$ and
  $s_{K}^{*}K=K*_{s}\ho =\set{(k,u:s_{K}(k)=q|(u)}$, and the canonical
  $G$-bundle isomorphisms $\theta_r$ and $\theta_s$ such that the
  diagram
  \begin{equation}
    \label{eq:27}
    \begin{tikzcd}
      \ho\arrow[dd,"q|",swap] && H \arrow[dd,"q"]
      \arrow[ll,"r_{H}",swap] \arrow[rr,"s_{H}"]
      \arrow[dl,"\theta_{r}",swap] \arrow[dl,"\cong"]
      \arrow[dr,"\theta_{s}"] \arrow[dr,"\cong",swap] &&\ho
      \arrow[dd,"q|"] \\
      &K*_{r}\ho \arrow[ul,"\pi_{2}",swap ] \arrow[dr,"\pi_{1}",swap ]
      && K*_{s}
      \ho \arrow[ur,"\pi_{2}"] \arrow[dl,"\pi_{1}"] \\
      K\units&& K \arrow[ll,"r_{K}"] \arrow[rr,"s_{K}",swap] && K\units
    \end{tikzcd}
  \end{equation}
  commutes.  At this point, the only structure on the pullbacks
  $K*_{s}\ho$ and $K*_{r}\ho$ is the $G$-bundle structure coming from
  Theorem~\ref{palais}.  We use the homeomorphisms $\theta_r$ and
  $\theta_s$ to impose groupoid structures on these pullbacks.  Then,
  by definition,
  \[
    \theta:=\theta_r\circ\theta_s\inv:K*_sH\units\to K*_rH\units
  \]
  is an isomorphism.  If $\theta(k,u)=(k',u')$, then
  $k'=\pi_{1}\circ \theta_{r}\circ
  \theta_{s}^{-1}=q(\theta_{s}^{-1}(k,u))=k$.  It follows that
  $\theta$ has the form
  \[
    \theta(k,u)=(k,k\cdot u)
  \]
  for some continuous map
  \[
    (k,u)\mapsto k\cdot u:K*_sH\units\to H\units.
  \]

  We claim that this gives an action of the groupoid $K$ on the space
  $H\units$ with respect to the moment map
  $\rho=q|:\ho\to K\units$.\footnote{Note that
    $q|:\ho\to K\units=G\backslash \ho$ is open since $G$ has an open
    range map.}  Thus we need to show that $q|(u)\cdot u=u$, and that
  $k\cdot (\ell\cdot u)=(k\ell)\cdot u$ for composable $k$, $\ell$,
  and $u$.

  We claim that in the imposed groupoid structure on $K*_{s}\ho$,
  \begin{align}
    \label{eq:30}
    s(k,u)=\bigl(s_{K}(k),u\bigr)\quad \text{and} \quad
    r(k,x)=\bigl(r_{K}(k),k\cdot u\bigr). 
  \end{align}

  To verify the formula for the range map, we use the commutativity of
  \eqref{eq:27}: let $h=\theta_{s}^{-1}(k,u)$.  Then
  \begin{align*}
    \pi_2(r(k,u))
    &=\pi_2(r(\theta_s(h)))
    \\&=\pi_2(\theta_s(r_{H}(h)))
    \\&=s_{H}(r_{H}(h))
    \\&=r_{H}(h)
    \\&=\pi_2(\theta_r(h))
    \\&=\pi_2(\theta_r\circ\theta_s\inv(k,u))
    \\&=k\cdot u.
  \end{align*}
  Similarly,
  \begin{align}
    \label{eq:29}
    \pi_1(r(k,u))&=\pi_{1}(r(\theta_{s}(h)))
    \\
                 &=\pi_{1}(\theta_{s}(r_{H}(h)))\\
                 &
                   = q(r_{H}(h))=q|(r_{H}(h))\\
                 &=r_{K}(q(h))=r_{K}(k).
  \end{align}

  The formula for the source map is proved similarly.

  Then $\bigl((k,u),(\ell,v)\bigr)\in (K*_{s}\ho)\pairs$ if and only
  if $(k,\ell)\in K\pairs$ and $u=\ell\cdot v$.  We claim
  \begin{align}
    \label{eq:28}
    (k,u)(\ell,v)=(k,\ell\cdot v)(\ell,v)=(k\ell,v).
  \end{align}
  The verify the claim, let $(k,u)=\theta_s(h)$ and
  $(\ell,v)=\theta_s(k)$.  We have
  \begin{align*}
    \pi_1\bigl((k,u)(\ell,v)\bigr)
    &=\pi_1\circ\theta_s(hk)
      =q(hk)=q(h)q(k)=k\ell,
  \end{align*}
  and similarly
  \[
    \pi_2\bigl((k,u)(\ell,v)\bigr) =\pi_2\circ\theta_s(hk)=
    s_{H}(hk)=s_{H}(k)=v.
  \]
  This establishes \eqref{eq:28}.

  We now see that
  \[
    r(k,\ell\cdot u) =r\bigl((k,\ell\cdot u)(\ell,u)\bigr)
    =r(k\ell,u),
  \]
  consequently
  \[
    k\cdot (\ell\cdot u)=(k\ell)\cdot u.
  \]
  Since $s(q|(u),u)=(q|(u),u)$, we must also have
  $(q|(u),u)=r(q|(u),u)=(q|(u),q|(u)\cdot u)$, so $q|(u)\cdot u=u$ and
  we have an action of $K$ on $\ho$.  
  Moreover, $\theta_{s}:H\to K*_{s}\ho$ is an isomorphism of locally
  compact groupoids.
  
  We verify that this isomorphism is $G$-equivariant:
  \begin{align*}
    x\cdot \theta_s(h)
    &=x\cdot \bigl(q(h),s_{H}(h)\bigr)
      =\bigl(q(h),x\cdot s_{H}(h)\bigr)
      =\bigl(q(h),s_{H}(x\cdot h)\bigr)
    \\&=\bigl(q(x\cdot h),s_{H}(x\cdot h)\bigr)
    =\theta_s(x\cdot h).
  \end{align*}

  This establishes part~(a), and then~(b) follows from the computation
  \begin{align*}
    \bigl(r_{H}(h),h\cdot (x\cdot t)\bigr)
    &=r(h,x\cdot t)
    \\&=r\bigl(x\cdot (h,t)\bigr)
    \\&=x\cdot r(h,t)
    \\&=x\cdot \bigl(r_{H}(h),h\cdot t\bigr)
    \\&=\bigl(r_{H}(h),x\cdot (h\cdot t)\bigr).
    \qedhere
  \end{align*}
\end{proof}

When $G$ acts principally and by isomorphisms on a \lcg\ $H$, we will
find it useful to have formulas describing various constructions when
$H$ is replaced by an action groupoid $\ac KT$ as described in
\thmref{classify}.  Moreover, it is convenient to apply \lemref{G on
  pullback} along with \thmref{classify}.  For example, we already
mentioned in \lemref{G on pullback} that
\[
  G\under (\ac KT)\simeq K
\]
via the map $G\cdot (k,t)\mapsto k$.

\begin{rem} If $h\in H$ and $q:H\to K$ is the quotient map, then
  $\theta_{s}(h)=\bigl(q(h),s_{H}(h)\bigr)$.  Thus
  $\theta(q(h),s_{H}(h))= \theta_{r}(h)=\bigl(q(h),r_{H}(h)\bigr)$.
  Hence the action of the quotient groupoid $K=G\under H$ on $H\units$
  in the previous result satisfies
  \[
    r_{H}(h)=q(h)\cdot s_{H}(h)\righttext{for}h\in H.
  \]
\end{rem}

We want to promote Theorem~\ref{classify} to a classification of
principal actions by isomorphisms on Fell bundles, thus providing a
generalization to groupoid actions of
\cite[Theorem~A.16]{kmqw2}.
For the proof, we will need to introduce the concept of an
\emph{action Fell bundle}.  Let $\pb:\BB\to K$ be a Fell bundle over a
groupoid $K$ and assume that $K$ acts on a space $T$.  As usual, we
let $\ac KT=K*T$ be the action groupoid with unit space identified with
$T$ so that
$r(k,t)=k\cdot t$,  $s(y,t)=t$, and
$(k,\ell\cdot t)(\ell,t)=(k\ell,t)$.  Then $(k,t)\mapsto k$ is a
groupoid homomorphism and we can form the pullback Fell bundle
\begin{equation}
  \label{eq:31}
  \pi_{1}^{*}\BB=\set{(b,k,t):\pb(b)=k}
\end{equation}
over $\ac KT$.  Clearly, we can condense the notation and consider
elements of $\pi_{1}^{*}\BB$ to be pairs
$\ac \BB T=\set{(b,t):s(\pb(b))=\rho(t)}$.  Then $s(b,t)=t$ and
$r(b,t)=\pb(b)\cdot t$.  Moreover,
\begin{equation}
  \label{eq:32}
  (b,\pb(b')\cdot t)(b',t)=(bb',t)\quad\text{and}\quad
  (b,t)^{*}=(b^{*},\pb(b)\cdot t).
\end{equation}

Then in analogy with Lemma~\ref{G on pullback}, we have the following.

\begin{lem}\label{G on pullback Fell}
  Let $G$ be a groupoid, $p:\BB\to K$ a Fell bundle, and suppose that we have a locally compact Hausdorff space $T$ which admits commuting actions by $G$
  and $K$ such that the moment map
  $\rho_K:T\to K\units$ is a principal $G$-bundle
  \(as in \lemref{G on pullback}\).
  \begin{enumerate}[\rm (a),nosep]
  \item The pullback action $G\actleft \ac \BB T$ given by
    $x\cdot (b,t)=(b,x\cdot t)$ is principal and by isomorphisms.
  
  \item The map $\Theta:G\under(\ac \BB T)\to \BB$ given by
    $\Theta([b,t])=b$ is a Fell-bundle isomorphism.
  
  \item $\BB$ acts on the right of $\ac \BB T$ by
    $(b,t)\cdot c=(bc,p(c)\inv\cdot t)$.
  \end{enumerate}
\end{lem}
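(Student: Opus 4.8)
The plan is to mirror the proof of \lemref{G on pullback} throughout, upgrading each statement from spaces to Fell bundles. For part~(a), principality of the pullback $G$-action on the \emph{base} groupoid $\ac KT$ was already established in \lemref{G on pullback}(a); since the pullback action on the total space $\BB$ is $x\cdot(b,t)=(b,x\cdot t)$, the induced action on the base is exactly the pullback $G$-action on $\ac KT$, so the bundle action is principal by \defnref{bundle principal}. That it is by isomorphisms is a direct computation using \eqref{eq:32}: one checks $x\cdot\bigl((b,\pb(b')\cdot t)(b',t)\bigr)=(bb',x\cdot t)=(b,\pb(b')\cdot(x\cdot t))(b',x\cdot t)$, using that the $G$- and $K$-actions on $T$ commute so that $x\cdot(\pb(b')\cdot t)=\pb(b')\cdot(x\cdot t)$; compatibility with the involution is the analogous one-line check. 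One also notes the action is $G$-equivariant for $\pi_1^*\BB\to\ac KT$ and isometric and linear on fibres (it is literally the identity on each fibre $B(k)$), so it is an action by isomorphisms of Fell bundles in the sense of \cite{hkqwsemi}*{Definition~5.9}.

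For part~(b), first apply \lemref{G on pullback}(b): $\theta\colon G\under(\ac KT)\to K$, $[k,t]\mapsto k$, is a groupoid isomorphism, and this is exactly the base map underlying the candidate $\Theta$. Next, $G\under(\ac\BB T)$ is a Fell bundle over $G\under(\ac KT)$ by \propref{lem-gua-usc}, since $G$ acts principally and by isomorphisms on $\ac\BB T$ by part~(a). The map $\Theta([b,t])=b$ is well-defined because $G$ acts trivially on the $b$-coordinate, and it covers $\theta$ (under the identification $G\under(\ac KT)\cong K$). On each fibre, $\Theta$ restricts to the bijection $(\pb)^{-1}([k,t])\to B(k)$, $G\cdot(b,t)\mapsto b$, which by the very definition of the Banach-space structure in \propref{lem-gua-usc} (namely $\|G\cdot(b,t)\|=\|(b,t)\|=\|b\|$, and additivity/scalar multiplication inherited through a single representative) is an isometric linear isomorphism; it is multiplicative and $*$-preserving by \eqref{eq:32} together with the definitions of the operations on $G\under(\ac\BB T)$ from \propref{lem-gua-usc}. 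Finally $\Theta$ is continuous: given $G\cdot(b_i,t_i)\to G\cdot(b,t)$, after passing to a subnet we may (as in the proof of \propref{lem-gua-usc}, using openness of the orbit map) assume $x_i\cdot(b_i,t_i)=(b_i,x_i\cdot t_i)\to(b,t)$ in $\ac\BB T$, whence $b_i\to b$ by continuity of the coordinate projection $\pi_1^*\BB\to\BB$. A continuous bijective bundle morphism between upper-semicontinuous Banach bundles that is isometric on fibres is automatically a homeomorphism (its inverse $b\mapsto G\cdot(b,\rho_K(\pb(b))\cdot(\text{lift}))$—more cleanly, openness follows from Fell's criterion exactly as in the surjectivity/openness argument of \lemref{G on pullback}(b)), so $\Theta$ is a Fell-bundle isomorphism.

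For part~(c), the formula $(b,t)\cdot c=(bc,p(c)\inv\cdot t)$ defines a right action of $\BB$ on $\ac\BB T$ lying over the right action of $K$ on $\ac KT$ given by $(k,t)\cdot\ell=(k\ell,\ell\inv\cdot t)$ from \lemref{G on pullback}(c); one checks well-definedness (the constraint $s(\pb(b))=\rho(t)$ is respected since $p(c)\inv\cdot t$ has moment $r_K(p(c))=s_K(\pb(b))$ provided $s(\pb(b))=r(p(c))$) and the action identities $\bigl((b,t)\cdot c\bigr)\cdot c'=(b,t)\cdot(cc')$ directly from associativity in $\BB$ and the $K$-action axioms on $T$. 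Continuity and bilinearity on the fibres are inherited from those of multiplication in $\BB$ and continuity of the $K$-action on $T$.

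The main obstacle I expect is part~(b): assembling the proof that $\Theta$ is an isomorphism of \emph{Fell bundles}—in particular verifying that it is a homeomorphism (openness via Fell's criterion) and that it intertwines all the algebraic structure—without simply re-deriving \propref{lem-gua-usc} from scratch. The cleanest route is to invoke \propref{lem-gua-usc} to know $G\under(\ac\BB T)$ is a Fell bundle, invoke \lemref{G on pullback}(b) for the base isomorphism, and then reduce everything on fibres to the tautological identification $G\cdot(b,t)\leftrightarrow b$; continuity/openness is then the only real work, handled by the same Fell-criterion subnet argument used repeatedly above.
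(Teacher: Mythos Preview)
Your proposal is correct and follows essentially the same approach as the paper, which gives only a sketch: parts~(a)--(b) ``follow the same lines as \lemref{G on pullback}'' and part~(c) is a routine verification of the action axioms (continuity, that the pairing covers the $K$-action on $\ac KT$, associativity, and the norm inequality). Your write-up simply fills in the details the paper omits; the one minor slip is in your well-definedness check for~(c), where $\rho_K(p(c)\inv\cdot t)=s_K(p(c))$ rather than $r_K(p(c))$, but since $s_K(p(bc))=s_K(p(c))$ the conclusion is unaffected.
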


\begin{remark}
  Note that the statement in item~(c) above includes the assertion
  that the action 
$(\ac \BB T)\actright \BB$ covers the principal groupoid action
$(\ac KT)\actright K$ of \lemref{G on pullback}.
\end{remark}

\begin{proof}[Sketch of the Proof]
  The proof of (a)--(b) follows the same lines as \lemref{G on
    pullback}.
  For part~(c), we check the conditions in the discussion preceding
  \cite{mw:fell}*{Definition~6.1}: first of all, the pairing 
  \[
    (\ac \BB T)*\BB\to \ac \BB T
  \]
  is continuous since the left coordinate is the multiplication in
  $\BB$ and the right coordinate is the action of $K$ on $T$.  Routine
  computations show that the pairing covers the action of $K$ on $\ac
  KT$, $((b,t)\cdot c)\cdot d=(b,t)\cdot (cd)$, and $\|(b,t)\cdot
  c\|\le \|(b,t)\|\|c\|$. 
\end{proof}

Even though we did not think it necessary to give any details in the
above proof, it will be convenient to have the lemma for
reference. For example, the isomorphism $G\under(\ac \BB T)\simeq \BB$
is useful in our proof of the following classification theorem.

\begin{thm}\label{classify Fell}
  Let a locally compact groupoid $G$ act principally and by
  isomorphisms on a Fell bundle $p:\AA\to H$, and let $\pb:\BB\to K$
  be the quotient Fell bundle $G\under \AA\to G\under H$.  Then there is
  an action of $K$ on $H\units$ such that $\AA$ is $G$-equivariantly
  isomorphic to the action Fell bundle
  \[
    \ac \BB H\units\to \ac KH\units.
  \]
\end{thm}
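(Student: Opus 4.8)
The plan is to combine \thmref{classify} (which handles the underlying groupoid level) with \lemref{G on pullback Fell} (which identifies the action Fell bundle with a quotient), using the fact that $G$ acts by isomorphisms on the fibers of $\AA$ to lift everything from the base $H$ to the total space $\AA$. First I would invoke \thmref{classify} to obtain the action of $K=G\under H$ on $H\units$ together with the $G$-equivariant groupoid isomorphism $\theta_s:H\to \ac K{H\units}$ sending $h\mapsto(q(h),s_H(h))$, where $q:H\to K$ is the quotient map. I then need the corresponding map on Fell bundles; the natural candidate is $\widetilde\theta:\AA\to\ac \BB{H\units}$ given by $a\mapsto(G\cdot a,s_H(p(a)))$, where $G\cdot a\in\BB=G\under\AA$ and we identify $\pb(G\cdot a)=G\cdot p(a)=q(p(a))$, so that $s_K(\pb(G\cdot a))=G\cdot s_H(p(a))=\rho(s_H(p(a)))$ as required for membership in $\ac\BB{H\units}$.

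Next I would check that $\widetilde\theta$ is a Fell-bundle morphism covering $\theta_s$. Continuity and openness follow from the facts already established about the quotient map $\AA\to\gua$ (it is open, by \propref{lem-gua-usc} and the discussion preceding it) together with continuity and openness of $p$, $s_H$, and the coordinate projections; fiberwise $\widetilde\theta$ restricts to the map $A(h)\to\pb^{-1}(G\cdot p(a))\times\{s_H(h)\}\cong \pb^{-1}(G\cdot p(a))$, $a\mapsto G\cdot a$, which is precisely the linear isometric isomorphism used in \propref{lem-gua-usc} to define the Banach-space structure on $\gua$. Compatibility with multiplication and involution is the computation $\widetilde\theta(ab)=(G\cdot(ab),s_H(p(ab)))=(G\cdot a,s_H(p(b)))(G\cdot b,s_H(p(b)))=\widetilde\theta(a)\widetilde\theta(b)$ using the product formula \eqref{eq:32} for the action Fell bundle (after adjusting $a,b$ within their orbits so that $p(a),p(b)$ are composable, exactly as in the proof of \propref{lem-gua-usc}), and similarly $\widetilde\theta(a^*)=(G\cdot a^*,s_H(p(a^*)))=(G\cdot a^*,\pb(G\cdot a)\cdot s_H(p(a)))=\widetilde\theta(a)^*$ using $r_H(p(a))=q(p(a))\cdot s_H(p(a))$ from the remark following \thmref{classify}. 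Then I would conclude $\widetilde\theta$ is an isomorphism: it is a bijection because it is a bijection on each fiber and covers the groupoid isomorphism $\theta_s$, and it is a homeomorphism because it is continuous and open.

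Finally I would verify $G$-equivariance. The $G$-action on $\ac\BB{H\units}$ is the pullback action $x\cdot(b,t)=(b,x\cdot t)$ from \lemref{G on pullback Fell}(a), and the computation mirrors the one at the end of the proof of \thmref{classify}: $x\cdot\widetilde\theta(a)=x\cdot(G\cdot a,s_H(p(a)))=(G\cdot a,x\cdot s_H(p(a)))=(G\cdot(x\cdot a),s_H(p(x\cdot a)))=\widetilde\theta(x\cdot a)$, where the third equality uses $G\cdot a=G\cdot(x\cdot a)$ and $x\cdot s_H(p(a))=s_H(x\cdot p(a))=s_H(p(x\cdot a))$ since $G$ acts by isomorphisms and $p$ is equivariant.

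I expect the main obstacle to be purely bookkeeping: keeping straight the three layers (the $G$-orbit space $\gua$ which is $\BB$, the base action groupoid $\ac K{H\units}$, and the action Fell bundle $\ac\BB{H\units}$ sitting over it) and confirming that the Banach-bundle axioms and the Fell-bundle structure maps all match up under $\widetilde\theta$. None of the individual verifications is hard given \propref{lem-gua-usc}, \lemref{G on pullback Fell}, and \thmref{classify}; the risk is simply in the identifications, e.g. making sure the moment-map condition $s_K(\pb(b))=\rho(t)$ defining $\ac\BB{H\units}$ is exactly what $\widetilde\theta$ produces, and that the product and involution formulas \eqref{eq:32} are being applied with the right orbit representatives. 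As with \lemref{G on pullback Fell}, once the isomorphism $G\under(\ac\BB{H\units})\simeq\BB$ is in hand, one could alternatively phrase the argument as: transport the $G$-equivariant isomorphism $\theta_s$ of \thmref{classify} through the quotient constructions, but the direct construction of $\widetilde\theta$ above seems cleanest.
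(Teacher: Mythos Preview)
Your proposal is correct and follows essentially the same route as the paper: your map $\widetilde\theta(a)=(G\cdot a,s_H(p(a)))$ is exactly the paper's $\Theta$, and the checks of $G$-equivariance, multiplicativity, and $*$-compatibility are carried out in the same way. The one place where the paper is more explicit is the openness of $\Theta$, which it proves directly via Fell's Criterion using the principality of the $G$-action on $H\units$ (since $\AA$ need not be locally compact or Hausdorff, \lemref{iso} does not apply verbatim and one must argue as in \remref{rem-act-fell}); your sketch of openness lists the right ingredients but leaves this step implicit.
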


\begin{proof}
  Let $\bar q:\AA\to\BB$ and $q:H\to K$ be the quotient maps.  As
  above we can form the action Fell bundle $\ac \BB{\ho}$ for the
  $K$-action on $\ho$ from Theorem~\ref{palais}.  Then
  \begin{align}
    \label{eq:35}
    \ac \BB{\ho}=\set{(b,v)\in\BB\times \ho:s_{K}(\pb(b))=q|(v)}.
  \end{align}
  Thus, topologically at least, $\ac \BB{\ho}$ is the pullback
  $(s_{K}\circ\pb)*K\units$:
  \begin{equation}
    \label{eq:36}
    \begin{tikzcd}[row sep=1.5cm, column sep =2cm] 
      \ac \BB{\ho}\arrow[r,"\pi_{2}"] \arrow[d,"\pi_{1}",swap] & \ho
      \arrow[d,"q|"] \\
      \BB\arrow[r,"s_{K}\circ \pb",swap]& K\units .
    \end{tikzcd}
  \end{equation}
  
  Since $(\ho,q|,K\units)$ is a principal $G$-bundle, it follows
  exactly as in the proof of Lemma~\ref{pullback} that
  $(\ac \BB{\ho},\pi_{1},\BB)$ is a $G$-bundle with respect to the
  action of $G$ on its second factor.  Then we get a commutative
  diagram
  \begin{equation}
    \label{eq:33}
    \begin{tikzcd}[row sep=1cm, column sep=2cm]
      \AA\arrow[rr,"s_{H}\circ p"] \arrow[dr,"\Theta"]
      \arrow[dr,"!",swap] \arrow[dd,"\bar q",swap] && \ho
      \arrow[dd,"q|"] \\
      &\ac \BB{\ho} \arrow[ur,"\pi_{2}"] \arrow[dl,"\pi_{1}"] \\
      \BB\arrow[rr,"s_{K}\circ \pb",swap] &&K\units
    \end{tikzcd}
  \end{equation}
  where $\Theta$ is the uniquely determined map given by
  $\Theta(a)=\bigl(q(a),s_{H}(p(a))\bigr)$.  We claim that $\Theta$ is
  a $G$-equivariant homeomorphism.  It is clearly $G$-equivariant:
  since $G$ acts by automorphisms of $H$, we have
  $\Theta(x\cdot a)=\bigl(q(x\cdot a),s_{H}(p(x\cdot a))\bigr)=
  \bigl(q(a),x\cdot s_{H}(p(a))\bigr)=x\cdot\Theta(a)$.

  The proof that $\Theta$ is a bijection follows exactly as in the
  proof of Lemma~\ref{iso}.  Since $\Theta$ is clearly continuous, we
  just need to verify it is open.  For this, since our bundles are not
  locally compact or necessarily Hausdorff, we have to modify the
  proof of Lemma~\ref{iso} slightly.  We still use Fell's Criterion.
  Suppose that
  $(b_{i},v_{i})\to \Theta(a)=\bigl(\bar q(a),s_{H}(p(a))\bigr)$ in
  $\ac \BB{\ho}$.  Since $\bar q$ is open, we can pass to a subnet,
  relabel, and assume there are $a_{i}\to a$ in $\AA$ such that
  $\bar q(a_{i})=b_{i}$.  Since $\pi_{1}(\Theta(a_{i}))=b_{i}$, there
  are $x_{i}\in G$ such that $(b_{i},v_{i})=x_{i}\cdot \Theta(a_{i})$.
  Then $x_{i}\cdot s_{H}(p(a_{i}))\to s_{H}(p(a_{i}))$ and
  $s_{H}(p(a_{i}))\to s_{H}(p(a))$.  Since the action of $G$ on $\ho$
  is principal, we have
  $x_{i}\to \rho_{H}(s_{H}(p(a)))=\rho_{H}(p(a))= \rho_{\AA}(a)$.
  Hence $x_{i}\cdot a_{i}\to a$ and
  $\Theta(x_{i}\cdot a_{i})=(b_{i},v_{i}) $.  Therefore $\Theta$ is
  a homeomorphism.

  Then we get a commutative diagram
  \begin{equation}
    \label{eq:34}
    \begin{tikzcd}
      \AA\arrow[r,"\Theta"] \arrow[d,"p",swap] & \ac \BB{\ho}
      \arrow[d,,"\pb\times \id"] \\
      H\arrow[r,"\theta_{s}",swap] &K*_{s}\ho
    \end{tikzcd}
  \end{equation}
  where $\pb\times \id(b,u)=\bigl(\pb(b),u)$ and
  $\theta_{s}(h)=\bigl(\bar q(h),s_{H}(h)\bigr)$ is the groupoid
  isomorphism from the proof of Theorem~\ref{classify}.  Since
  $\Theta$ is a $G$-equivariant homeomorphism and $\theta_{s}$ is a
  groupoid isomorphism, $\Theta$ will be a Fell bundle isomorphism
  provided it preserves multiplication and the involution.

  Since
  $\theta_{s}(p(b))=\bigl(\bar q(p(b)),s_{H}(p(b))\bigr)=
  \bigl(\pb(q(b)), s_{h}(p(b))\bigr)$, it follows that
  $r_{H}(p(b))=\pb(q(b))\cdot s_{h}(p(b))$.  Hence, if $a,b\in\AA$
  with $s\circ p(a)=r\circ p(b)$, then
  \begin{align*}
    \Theta(a)\Theta(b)
    &=\bigl(q(a),s\circ p(a)\bigr)\bigl(q(b),s\circ p(b)\bigr)
    \\&=\bigl(q(a)q(b),s\circ p(b)\bigr)
    \\&=\bigl(q(ab),s\circ p(ab)\bigr)
    \\&=\Theta(ab),
  \end{align*}
  where the multiplication on the right-hand side of the top line is
  defined since
  \[
    s_{H}(p(a))=r_{H}(p(b))=\pb(q(b))\cdot s_{H}(p(b)).
  \]
  For the involution, we have
  \begin{align*}
    \Theta(a)^*
    &=\bigl(q(a),s_{H}(p(a))\bigr)^*
    \\&=\bigl(q(a)^*,\pb(q(a))\cdot s_{H}(p(a))\bigr)
    \\&=\bigl(q(a^*),r_{H}(p(a))\bigr)
    \\&=\bigl(q(a^*),s_{H}(p(a)\inv)\bigr)
    \\&=\bigl(q(a^*),s_{H}(p(a^*))\bigr)
    \\&=\Theta(a^*).
    \qedhere
  \end{align*}
\end{proof}

\section{Semidirect-product actions}

Give an action of $G$ on $H$ by isomorphisms, we want to consider how
to build actions of their semidirect product $\sd HG$ on a space $T$
from actions of $H$ and $G$ on $T$.   For motivation, consider the
case where $H$ and $G$ are groups acting on a space $T$ and $\sd HG$
is the semidirect product of groups.   The following notation may be
helpful.  Let $\alpha:G\to \Aut H$ be a homomorphism and let $L(x)$ and
$\pi(h)$ be the homeomorphisms of $T$ induced by $x\in G$ and $h\in H$
respectively.   It is basic algebra to verify that we get an action of
$\sd HG$ restricting to the given actions on $H$ and $G$ exactly when
the pair $(\pi,L)$ is \emph{covariant} in that the diagram
\begin{equation}
  \label{eq:50}
  \begin{tikzcd}[column sep=2cm]
    T\arrow[r,"L(x)"] \arrow[d,"\pi(h)",swap] & T
    \arrow[d,"\pi(\alpha_{x}(h))"] \\
    T\arrow[r,"L(x)",swap] & T
  \end{tikzcd}
\end{equation}
commutes for all $x\in G$ and $h\in H$.  Alternatively,
\begin{align}
  \label{eq:51}
  L(x)\circ \pi(h) = \pi\bigl(\alpha_{x}(h)\bigr) \circ L(x),
\end{align}
which is a formula that is familiar to that
for covariant representations of dynamical
systems.   (This is the motivation for our terminology below.)

Now we return to the case where $G$ and $H$ are groupoids.  We let
$\rho_{G}:T\to \go$ and $\rho_{H}:T\to \ho$ be the moment maps for
the actions on $T$, and
$\rho_{G}^{H}:H\to \go$ the moment map for the action of $G$ on $H$ by
isomorphisms.  As above, we use notations $L(x)$ and $\pi(h)$ for the
partially defined maps on $T$ induced by $x\in G$ and $h\in H$, and
$\alpha_{x}$ for the isomorphism of $T_{s(x)}$ onto $T_{r(x)}$.  Then
we want these actions to satisfy a similar covariance condition
\eqref{eq:51} even though both sides are not everywhere defined.  Thus, 
written in the usual notation for groupoid actions, we want
\begin{align}
  \label{eq:52}
  x\cdot (h\cdot t)=(x\cdot h)\cdot (x\cdot t)
\end{align}
to hold when both sides are defined.   More precisely, let
\begin{align}
  \label{eq:62}
  L=\set{(x,h,t):\text{$s_{H}(h)=\rho_{H}(t)$
  and $s_{G}(x) =
    \rho_{G}(h\cdot t)$}}
\end{align}
be the subset of $G\times H\times T$ where the left-hand side of
\eqref{eq:52} is defined, and let 
\begin{align}
  \label{eq:63}
  R=\set{(x,h,t):\text{$s_{G}(x)=\rho_{G}^{H}(h)$,
  $s_{G}(x)=\rho_{G}(t)$,
    and $s_{H}(x\cdot h)=\rho_{H}(x\cdot t)$}}
\end{align}
be the subset where the right-hand side is defined.
Then we require that $L=R$,
and that \eqref{eq:52} holds on this common set.

While this might seem overly difficult to establish,
the following observation greatly simplifies
checking the validity of these requirements.

\begin{lemma}
  \label{lem-john1}  Suppose that the $G$- and $H$-actions on $T$ are
  compatible in that the diagram
  \begin{equation}
    \label{eq:64}
    \begin{tikzcd}[column sep=2cm]
      &T\arrow[dl,"\rho_{G}",swap] \arrow[dr,"\rho_{H}"] \\
      \go && \ho \arrow[ll,"\rho_{G}^{H}"]
    \end{tikzcd}
  \end{equation}
  commutes and $\rho_{H}$ is $G$-equivariant in that
  \begin{align}
    \label{eq:65} \rho_{H}(x\cdot t)=x\cdot \rho_{H}(t)\quad\text{if
    $s_{G}(x)=\rho_{G}(t)$.} 
  \end{align}
Then the sets $L$ and $R$ defined above coincide and the left-hand
side of \eqref{eq:52} is defined precisely when the right-hand side
is.  Moreover, if $x\cdot h$ and $h\cdot t$ are both defined, then
$(x,h,t)\in L=R$.   Conversely, if $R$ and $L$ coincide, then
$\rho_{G}(t)=\rho_{G}^{H}\bigl(\rho_{H}(t)\bigr)$ and $\rho_{H}(x\cdot
t)=x\cdot\rho_{H}(t)$ for all $(x,h,t)\in L=R$.
\end{lemma}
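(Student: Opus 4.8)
The statement is essentially a bookkeeping lemma about when partially-defined composite actions are defined, so the plan is to unwind the definitions of $L$ and $R$ from \eqref{eq:62} and \eqref{eq:63} term by term and show each defining condition in one set follows from the hypotheses together with the defining conditions of the other. First I would fix $(x,h,t)$ and list the three clauses defining membership in $R$: $s_G(x)=\rho_G^H(h)$, $s_G(x)=\rho_G(t)$, and $s_H(x\cdot h)=\rho_H(x\cdot t)$; and the two clauses defining $L$: $s_H(h)=\rho_H(t)$ and $s_G(x)=\rho_G(h\cdot t)$. The commutativity of \eqref{eq:64} says $\rho_G=\rho_G^H\circ\rho_H$ on $T$, and \eqref{eq:65} is the $G$-equivariance of $\rho_H$.

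\textbf{Key steps.} For the inclusion $L\subseteq R$: given $(x,h,t)\in L$, we have $s_H(h)=\rho_H(t)$, so $h\cdot t$ is defined and $\rho_H(h\cdot t)=r_H(h)$; combined with $s_G(x)=\rho_G(h\cdot t)$ and \eqref{eq:64} applied to $h\cdot t$, this gives $s_G(x)=\rho_G^H(\rho_H(h\cdot t))=\rho_G^H(r_H(h))=\rho_G^H(h)$ (using that $\rho_G^H$ is constant on each $H$-fiber, i.e.\ $\rho_G^H(h)=\rho_G^H(r_H(h))$). That is the first clause of $R$. Next, applying \eqref{eq:64} to $t$ and using $s_H(h)=\rho_H(t)$ we get $\rho_G(t)=\rho_G^H(\rho_H(t))=\rho_G^H(s_H(h))=\rho_G^H(h)=s_G(x)$, the second clause. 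For the third clause, $x\cdot h$ is defined since $s_G(x)=\rho_G^H(h)$, and $x\cdot t$ is defined since $s_G(x)=\rho_G(t)$; then $s_H(x\cdot h)=x\cdot s_H(h)=x\cdot\rho_H(t)=\rho_H(x\cdot t)$ using equivariance of the action on $H$ for the middle and \eqref{eq:65} for the last. So $(x,h,t)\in R$. For $R\subseteq L$: given the three clauses of $R$, $x\cdot h$ and $x\cdot t$ are defined, and $s_H(x\cdot h)=\rho_H(x\cdot t)$; translating back by $x\inv$ (the $G$-action on $H$ is by isomorphisms, so $s_H(x\cdot h)=x\cdot s_H(h)$, and $\rho_H$ is $G$-equivariant) yields $s_H(h)=\rho_H(t)$, the first clause of $L$. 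Then $h\cdot t$ is defined with $\rho_H(h\cdot t)=r_H(h)$, and $s_G(x)=\rho_G^H(h)=\rho_G^H(r_H(h))=\rho_G^H(\rho_H(h\cdot t))=\rho_G(h\cdot t)$ by \eqref{eq:64}, the second clause. Hence $L=R$. The ``moreover'' clause is immediate: if $x\cdot h$ and $h\cdot t$ are both defined then $s_G(x)=\rho_G^H(h)$ and $s_H(h)=\rho_H(t)$, and it only remains to see $s_G(x)=\rho_G(h\cdot t)$, which is $\rho_G^H(h)=\rho_G^H(r_H(h))=\rho_G^H(\rho_H(h\cdot t))=\rho_G(h\cdot t)$ again. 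For the converse, assuming $L=R$: picking a suitable $x$ with $s_G(x)=\rho_G(t)$ and an $h$ with $h\cdot t$ defined — e.g.\ $h=\rho_H(t)$, a unit, so $h\cdot t=t$ — membership in $L$ forces membership in $R$, whose second clause reads $s_G(x)=\rho_G(t)$ and whose first reads $s_G(x)=\rho_G^H(h)=\rho_G^H(\rho_H(t))$, giving $\rho_G(t)=\rho_G^H(\rho_H(t))$; and choosing $(x,h,t)\in L$ with $x\cdot h$ defined and using the third clause of $R$ together with $s_H(x\cdot h)=x\cdot s_H(h)=x\cdot\rho_H(t)$ gives $\rho_H(x\cdot t)=x\cdot\rho_H(t)$.

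\textbf{Main obstacle.} There is no deep obstacle here — the content is entirely in getting the partial-composability conditions matched up without circular reasoning. The one point that needs care is that moment maps for the $H$-action are constant on $H$-orbits in exactly the right way (i.e.\ $\rho_G^H(h)=\rho_G^H(r_H(h))$, which follows from $\rho_G^H$ being the moment map so that $r_H$ and $s_H$ land in the same $G$-fiber), and that the domain conditions ``$h\cdot t$ is defined'' and ``$x\cdot h$ is defined'' are used symmetrically; I would state these small facts explicitly at the outset so the four inclusions go through mechanically.
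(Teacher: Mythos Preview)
Your proof is correct and follows the same approach as the paper: both verify $L\subseteq R$ and $R\subseteq L$ by chasing the defining clauses using $\rho_G=\rho_G^H\circ\rho_H$, the $G$-equivariance of $\rho_H$, and the fact that $\rho_G^H(h)=\rho_G^H(r_H(h))=\rho_G^H(s_H(h))$. The only minor difference is in the converse, where the paper simply fixes an arbitrary $(x,h,t)\in L=R$ and reads both identities off directly from simultaneous membership in $L$ and $R$, rather than constructing specific triples as you do --- since the statement only asserts the identities for triples already in $L=R$, the paper's route is a bit more direct, but this is a stylistic point rather than a gap.
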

\begin{proof}
  Suppose that $L=R$ and $(x,h,t)\in L=R$. 
Since $G$ acts by automorphisms,
$\rho_{G}^{H}(s_{H}(h))=\rho_{G}^{H}(h)$, and hence
\[\rho_{G}^{H}\bigl(\rho_{H}(t)\bigr)=\phi_{G}^{H}(s_{H}(h))=\phi_{G}^{H}(h)
  =s_{G}(x) =\rho_{G}(t).\]

On the other hand,
\begin{align}
  \label{eq:60}
  \rho_{H}(x\cdot t)=s_{H}(x\cdot h)=x\cdot s_{H}(h)=x\cdot \rho_{H}(t).
\end{align}

For the other direction, now assume $\rho_{G}=\rho_{G}^{H}\circ
\rho_{H}$ and that $\rho_{H}$ is $G$-equivariant.

Assume $(x,h,t)\in L$.  Then, since $G$ acts by isomorphisms, 
\begin{align}
  \label{eq:66}
  s_{G}(x)&=\rho_{G}(h\cdot t) = \rho_{G}^{H}\bigl(\rho_{H}(h\cdot
            t)\bigr) \\
&= \rho_{G}^{H}(r_{H}(h)) =\rho_{G}^{H}(h).  
\end{align}
Then
\begin{align}
  \label{eq:67}
  s_{G}(x)&= \rho_{G}^{H}(h) = \rho_{G}^{H}\bigl(s_{H}(h)\bigr) \\
&= \rho_{G}^{H}\bigl(\rho_{H}(t)\bigr) = \rho_{G}(t).
\end{align}
Lastly,
\begin{align}
  \label{eq:68}
  s_{H}(x\cdot h)
&= x\cdot s_{H}(h) = x\cdot \rho_{H}(t) = \rho_{H}(x\cdot t).
\end{align}
 Therefore $(x,h,t)\in R$.

Now suppose that $(x,h,t)\in R$.   Then
\begin{align}
  \label{eq:69}
  x\cdot s_{H}(h)
&= s_{H}(x\cdot h) = \rho_{H}(x\cdot t) = x\cdot \rho_{H}(t).
\end{align}
But then $s_{H}(h)=\rho_{H}(t)$.
But then we also have
\begin{align}
  \label{eq:70}
  s_{G}(x) 
&= \rho_{G}(t) =\rho_{G}^{H}\bigl(\rho_{H}(t)\bigr) = \rho_{G}^{H}
              \bigl(s_{H}(h) \bigr)\\
&= \rho_{G}^{H}\bigl(r_{H}(h)\bigr)= \rho_{G}^{H}\bigl(\rho_{H}(h\cdot 
                                        t\bigr) \\
&=\rho_{G}(h\cdot t).
\end{align}
Thus $(x,h,t)\in L$.  Thus $L=R$ as claimed.

Now suppose that $x\cdot h$ and $h\cdot t$ are defined.   That means
$s_{G}(x)=\rho_{G}^{H}(h) $ and $s_{H}(h)=\rho_{H}(t)$.   Then
\begin{align}
  \label{eq:71}
  s_{H}(x\cdot h)=x\cdot s_{H}(h) = x\cdot \rho_{H}(t)=\rho_{H}(x\cdot t).
\end{align}
Therefore $(x,h,t)\in R$ and we already know $L=R$.
\end{proof}

Lemma~\ref{lem-john1} and the preceding discussion motivate the
following definition.

\begin{defn}\label{def:covariant}
  Let $G$ act on $H$ by isomorphisms.  We define
  actions of $G$ and $H$ on $T$ to be \emph{covariant} if the moment
  maps commute as in 
  \eqref{eq:64}, if $\rho_{H}$ is $G$-equivariant as in \eqref{eq:65},
  and if the covariance condition
  \begin{equation}\label{cov}
    x\cdot (h\cdot t)=(x\cdot h)\cdot (x\cdot t)
  \end{equation}
  holds whenever $x\cdot h$ and $h\cdot t$ are defined.
\end{defn}

\begin{example}\label{ex-auto-coherent}
  Suppose that $G$ acts on $H$ by isomorphisms.  We can also let $H$
  act on itself by left translation---so that $H$ also plays the role
  of $T$ in \defnref{def:covariant}.  Then $\rho_{H}=r_{H}$ and
  $\rho_{G}^{H}=\rho_{G}$.  The actions 
    are covariant because $G$ acts by isomorphisms.  
\end{example}

Covariant actions allow us to build actions of semidirect products.

\begin{lem}\label{covariant}
  Let $G$ and $H$ be locally compact groupoids, let $T$ be a locally
  compact Hausdorff space, let $G$ act on $H$ by isomorphisms, and let
  $G$ and $H$ act covariantly on $T$.  Then the semidirect product
  groupoid $\hsdg$ acts on $T$ by
  \begin{equation}\label{semi act}
    (h,x)\cdot t=h\cdot (x\cdot t)\quad\text{if $s(x)=\rho_G(t)$ and
      $s(h)=x\cdot \rho_H(t)$,} 
  \end{equation}
  where $\rho_G$ and $\rho_H$ are the moment maps for the actions of
  $G$ and $H$ on $T$, respectively.  The moment map for the action
    \eqref{semi act} is
  $\rho(t) = \bigl(\rho_{H}(t),\rho_{G}(t)\bigr)$. If $\rho_{H}$ is
  open, then so is~$\rho$.
\end{lem}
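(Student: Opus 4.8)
The plan is to verify directly that the formula \eqref{semi act} defines an action of the semidirect product groupoid $\hsdg$ on $T$ with the claimed moment map, checking in turn: (i) that the map is well-defined, i.e.\ that whenever $(h,x)$ is a composable pair in $\hsdg$ (so $s_H(h)=r_H^{G\text{-part}}$... more precisely $s(h) = x\cdot s_G(x)$ in the appropriate sense for the semidirect product) and $s_{\hsdg}(h,x)=\rho(t)$, both $x\cdot t$ and $h\cdot(x\cdot t)$ are defined; (ii) that $\rho(t)$ is a unit of $\hsdg$ and acts trivially; (iii) associativity $\bigl((h,x)(k,y)\bigr)\cdot t = (h,x)\cdot\bigl((k,y)\cdot t\bigr)$; and (iv) continuity of the action map together with openness of $\rho$.

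First I would pin down the moment map claim. Recall the unit space of $\hsdg$ is (a copy of) $\ho$, with $s_{\hsdg}(h,x) = (s_H^{}(x\cdot?),\ldots)$; using the conventions of \cite{hkqwsemi} the source of $(h,x)$ in $\hsdg$ is the unit determined by $s_G(x)$ and $x\inv\cdot s_H(h)$, and the range is determined by $r_H(h)$. Matching this against \eqref{semi act}: the condition ``$s(x)=\rho_G(t)$ and $s(h)=x\cdot\rho_H(t)$'' says precisely that $t$ lies over the source unit of $(h,x)$, so taking $\rho(t)=(\rho_H(t),\rho_G(t))\in\ho\times\go$ (suitably interpreted as a unit of $\hsdg$ via the compatibility \eqref{eq:64}) is forced. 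Then I would check that $(h,x)\cdot t$, when defined, lies over the range unit: $\rho_H((h,x)\cdot t) = \rho_H(h\cdot(x\cdot t)) = r_H(h)$ and $\rho_G((h,x)\cdot t) = \rho_G^H(r_H(h))$ by the commuting-moment-map hypothesis, which is exactly the range unit of $(h,x)$.

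Next, for associativity, given composable $(h,x),(k,y)$ in $\hsdg$ — so $s(x)=r(y)$ and $s(h) = x\cdot r_H(k)$, with product $(h\,(x\cdot k),\,xy)$ — and $t$ over the source of $(k,y)$, I would expand $\bigl(h(x\cdot k),xy\bigr)\cdot t = \bigl(h(x\cdot k)\bigr)\cdot\bigl((xy)\cdot t\bigr) = h\cdot\bigl((x\cdot k)\cdot(x\cdot(y\cdot t))\bigr)$ and use the covariance identity \eqref{cov} — applied with the pair $(x,k)$ and the point $y\cdot t$ — to rewrite $(x\cdot k)\cdot(x\cdot(y\cdot t)) = x\cdot(k\cdot(y\cdot t))$, yielding $h\cdot\bigl(x\cdot(k\cdot(y\cdot t))\bigr) = (h,x)\cdot\bigl((k,y)\cdot t\bigr)$; here \lemref{lem-john1} guarantees the relevant expressions are all simultaneously defined. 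The trivial-unit check is immediate from $\rho_G(t)\cdot t = t$, $\rho_H(t)\cdot t = t$. Finally, continuity of $(h,x,t)\mapsto h\cdot(x\cdot t)$ on the appropriate fibred product follows since it is a composition of the two given (continuous) action maps, and openness of $\rho$ follows because $\rho = (\rho_H,\rho_G)$ with $\rho_G = \rho_G^H\circ\rho_H$, so $\rho$ factors as $\rho_H$ followed by the graph embedding $v\mapsto(v,\rho_G^H(v))$, a homeomorphism onto its image; hence if $\rho_H$ is open so is $\rho$.

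The main obstacle I anticipate is purely bookkeeping: correctly translating the somewhat involved ``defined-ness'' conditions $L=R$ from Lemma~\ref{lem-john1} and the semidirect-product source/range conventions of \cite{hkqwsemi} into the precise hypotheses on $t$ in \eqref{semi act}, and making sure that at each step of the associativity computation every partially-defined operation is legitimately applied. Once Lemma~\ref{lem-john1} is invoked to reconcile $L$ and $R$, the verification that ``$x\cdot h$ and $h\cdot t$ both defined'' matches ``$(h,x)$ composable in $\hsdg$ and $t$ over the source unit'' should go through cleanly, and the rest is routine.
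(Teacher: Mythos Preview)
Your proposal is correct and follows essentially the same approach as the paper: verify the moment map lands in $\hsdg\units$, check that the source condition matches \eqref{semi act}, confirm equivariance $\rho((h,x)\cdot t)=r(h,x)$, establish associativity via the covariance identity \eqref{cov}, and handle continuity by composition. The one minor difference is in the openness argument---the paper uses Fell's Criterion directly, whereas you factor $\rho$ as $\rho_H$ followed by the homeomorphism $v\mapsto(v,\rho_G^H(v))$ onto $\hsdg\units$; your route is slightly slicker but both are routine.
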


\begin{proof}
  Recall that
  \[
    \hsdg\units=\set{(v,u)\in H\units\times
    G\units:\rho_G^H(v)=u},
  \]
  where $\rho_G^H:H\to G\units$ is the moment map for the action of
  $G$ on $H$.  The map $\rho:T\to \hsdg\units$ defined by
  \[
    \rho(t)=(\rho_H(t),\rho_G(t))
  \]
  is clearly continuous.  If $\rho_{H}$ is open, we show that $\rho$
  is as well using Fell's Criterion.  Suppose that
  $(v_{n},u_{n})\to (v,u)=\rho(t)$.  Then $v_{n}\to v=\rho_{H}(t)$,
  and we can pass to subnet, relabel, and assume there are
  $t_{n}\in T$ such that $t_{n}\to t$ and $\rho_{H}(t_{n})=v_{n}$.
  Since $(v_{n},u_{n})\in \hsdg\units$, we have
  $\rho_{G}^{H}(v_{n})=u_{n}$.  Then since the actions are compatible,
  $\rho_{G}(t_{n})=\rho_{G}^{H}\bigl(\rho_{H}(t_{n})\bigr) =
  \rho_{G}^{H}(v_{n})=u_{n}$.  It now follows that $\rho$ is open by
Fell's Criterion.
  Since
  \[
    s(h,x)=\bigl(x\inv\cdot
    s_{H}(h),s_{G}(x)\bigr)\righttext{for}(h,x)\in S(H,G),
  \]
  we have $s(h,x)=\rho(t)$ if and only if
  \[
    s_{G}(x)=\rho_G(t) \midtext{and} s_{H}(h)=x\cdot \rho_H(t).
  \]
  Since we are assuming
    $x\cdot \rho_{H}(t)=\rho_{H}(x\cdot t)$, the operation in
  \eqref{semi act} is well-defined.

  Moreover, we can easily check that $\rho(t)\cdot t=t$.  To check
  the action condition, we first need to
  see that $\rho$ is equivariant.  But
  \begin{align}
    \label{eq:42}
    \rho\bigl((h,x)\cdot t\bigr)
    &= \rho\bigl(h\cdot (x\cdot t)\bigr) \\
    &= \bigl(\rho_{H}(h\cdot (x\cdot t)),\rho_{G}(h\cdot (x\cdot
      t))\bigr) \\
    &= \bigl(r_{H}(h),\rho_{G}(h\cdot (x\cdot t))\bigr) \\
    &=\bigl(r_{H}(h),\rho_{G}^{H}(\rho_{H}(h\cdot (x\cdot t))\bigr) \\
    &=\bigl(r_{H}(h),\rho_{G}^{H}(r_{H}(h))\bigr). \\
    \intertext{Since $G$ acts by isomorphisms on $H$, this is }
    &= \bigl(r_{H}(h),\rho_{G}^{H}(h)\bigr) \\
    \intertext{so that, since $(h,x)\in\hsdg$, this is}
    &= \bigl(r_{H}(h),r_{G}(x)\bigr) \\
    &=r(h,x)
  \end{align}
  as required.  Then, on the one hand, if
  $\bigl((h,x),(h',y)\bigr) \in \sd HG\pairs$, then
  \begin{align}
    \label{eq:43}
    (h,x)\cdot \bigl((h',y)\cdot t\bigr)
    &= (h,x)\cdot \bigl(h'\cdot (y\cdot t)\bigr) \\
    &= h\cdot \bigl(x\cdot \bigl(h'\cdot (y\cdot t)\bigr)\bigr).
  \end{align}
  On the other hand,
  \begin{align}
    \label{eq:44}
    \bigl((h,x)(h',y)\bigr)\cdot t
    &= \bigl(h(x\cdot h'),xy\bigr) \cdot t = \bigl(h(x\cdot h')\bigr) \cdot
      \bigl((xy)\cdot t\bigr) \\
    &= h\cdot \bigl[(x\cdot h') \cdot \bigl(x\cdot (y\cdot t)\bigr)\bigr], \\
    \intertext{which, since the actions are covariant, is}
    &= h\cdot \bigl[x\cdot \bigl(h'\cdot (y\cdot t)\bigr)\bigr]
  \end{align}
  as required.

  The continuity is routine: if $(h_i,x_i)\to (h,x)$ in $\hsdg$ and
  $t_i\to t$ in $T$, then
  \[
    (h_i,x_i)\cdot t_i=h_i\cdot (x_i\cdot t_i)\to h\cdot (x\cdot
    t)=(h,x)\cdot t,
  \]
  because $x_i\cdot t_i\to x\cdot t$.
\end{proof}

\begin{cor}\label{semidirect translate}
  Let $G$ and $H$ be locally compact groupoids, let $G$ act on $H$ by
  isomorphisms, and let $H$ act on itself as a space by left
  translation.  Then the semidirect product groupoid $\hsdg$ acts on
  $H$ as a space by
  \[
    (h,x)\cdot k=h(x\cdot k)
  \]
  whenever $k\in H_{s(x)}$ and $s(h) =x\cdot r(k)$.  Moreover, if $G$
  acts on $H$ principally, then so does $\hsdg$.  Furthermore, the
  moment map $\rho'(k)=(r(k),\rho(k))$ for $\hsdg\actleft H$ is open
  provided the range map on $H$ is open.
\end{cor}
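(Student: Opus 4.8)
The plan is to read the statement off from \lemref{covariant}, applied to the space $T=H$ carrying the left-translation action of $H$ together with the given $G$-action. First I would invoke \exref{ex-auto-coherent}, which says precisely that these two actions on $T=H$ are covariant: there $\rho_H=r_H$, the map $\rho_G^H$ serves as the moment map for $G$ on $T$, the moment maps commute trivially, $r_H$ is $G$-equivariant because $G$ acts by isomorphisms, and the covariance identity reduces to $x\cdot(hk)=(x\cdot h)(x\cdot k)$, which is again just the statement that $G$ acts by isomorphisms. Then \lemref{covariant} delivers the action of $\hsdg$ on $H$ with $(h,x)\cdot k=h\cdot(x\cdot k)=h(x\cdot k)$, defined exactly when $s(x)=\rho_G^H(k)$ (i.e. $k\in H_{s(x)}$) and $s(h)=x\cdot r(k)$, with moment map $\rho'(k)=(r_H(k),\rho_G^H(k))$; the assertion that $\rho'$ is open whenever $r_H$ is open is exactly the final sentence of \lemref{covariant}.

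It remains to check that the $\hsdg$-action on $H$ is principal whenever the $G$-action on $H$ is. For freeness, suppose $(h,x)\cdot k=k$, i.e. $h(x\cdot k)=k$. Comparing sources and using that $G$ acts by isomorphisms gives $x\cdot s_H(k)=s_H(x\cdot k)=s_H\bigl(h(x\cdot k)\bigr)=s_H(k)$, so freeness of the $G$-action forces $x=\rho_G^H(s_H(k))=\rho_G^H(k)$, a unit of $G$; hence $x\cdot k=k$, so $hk=k$ and, cancelling in $H$, $h=r_H(k)$. Thus $(h,x)=(r_H(k),\rho_G^H(k))=\rho'(k)$, the unit of $\hsdg$ at $\rho'(k)$, so the action is free.

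For properness I would use \propref{prop-proper-equiv}. Suppose $k_i\to k$ and $(h_i,x_i)\cdot k_i=h_i(x_i\cdot k_i)\to k'$ in $H$. Applying $s_H$ and using $s_H\bigl(h_i(x_i\cdot k_i)\bigr)=s_H(x_i\cdot k_i)=x_i\cdot s_H(k_i)$, we get $x_i\cdot s_H(k_i)\to s_H(k')$ while $s_H(k_i)\to s_H(k)$; since the $G$-action on $H$ is free and proper, its restriction to the closed invariant subspace $\ho$ is too, so condition \textup{(PA4)} of \propref{prop-proper-equiv} shows the net $\set{x_i}$ converges, say $x_i\to x$ in $G$. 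Then $x_i\cdot k_i\to x\cdot k$ and $h_i=\bigl(h_i(x_i\cdot k_i)\bigr)(x_i\cdot k_i)\inv\to k'(x\cdot k)\inv$ by continuity of multiplication and inversion in $H$, so $\set{(h_i,x_i)}$ converges in $\hsdg$ (which is a closed subset of $H\times G$). By condition \textup{(PA3)} of \propref{prop-proper-equiv} the $\hsdg$-action is proper, and being also free it is principal. The only step that needs genuine care is this last one: tracking which composability and domain conditions persist in the limit so that $x\cdot k$ and $k'(x\cdot k)\inv$ are actually defined, and confirming that the limit pair lies in $\hsdg$ — everything else is routine once \lemref{covariant} and \exref{ex-auto-coherent} are in hand.
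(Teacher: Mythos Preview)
Your argument is correct. The first paragraph agrees verbatim with the paper's approach: both invoke \exref{ex-auto-coherent} to establish covariance and then apply \lemref{covariant} to obtain the action, the moment map, and its openness.

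Where you diverge is in the proof of principality. The paper appeals to the classification theorem (\thmref{classify}) to replace $H$ by an action groupoid $\ac KT$, and then verifies freeness and properness using the explicit coordinate formulas available in that model. Your argument instead works directly in $H$: you obtain freeness by applying $s_H$ to the fixed-point equation $h(x\cdot k)=k$ and invoking freeness of the $G$-action on $\ho$, and you obtain properness by applying $s_H$ to the converging nets, using \textup{(PA4)} for the $G$-action restricted to the closed invariant subspace $\ho$ to force $x_i\to x$, and then recovering $h_i$ via the groupoid operations. Both approaches are short; the paper's route has the virtue of exercising the structure theory it has just developed (and the same coordinate picture is reused in \propref{prop-basic-equiv}), while yours is more self-contained and avoids the forward reference to \thmref{classify}. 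The composability and Hausdorff checks you flag at the end are indeed routine: the relevant domain equalities (e.g., $s_G(x)=\rho_G^H(k)$ and $s_H(k')=s_H(x\cdot k)$) all pass to the limit because the maps involved are continuous and the target spaces are Hausdorff.
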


\begin{proof}
  In view of Example~\ref{ex-auto-coherent}, the actions of $G$ and
  $H$ on $H$ are covariant since $G$ acts on $H$ by isomorphisms.
  Thus $\hsdg$ acts on $H$ by \lemref{covariant}.

  Suppose that $G$ acts principally on $H$. Then
  by \thmref{classify}, we may replace $H$ by an action groupoid
  $\ac KT$, where $K$ is a locally compact groupoid acting on a
  locally compact Hausdorff space $T$, the moment map
  $\rho_{K}:T\to K\units$ is a principal $G$-bundle, the actions of
  $G$ and $K$ on $T$ commute, and (by \lemref{G on pullback}) the
  action $G\actleft \ac KT$	 is given by $x\cdot (k,t)=(k,x\cdot t)$.
  Note that $\sd{\ac KT}G$ can be described, with a mild abuse of
    notation, as
    \[
      \set{(k,t,x)\in K\times T\times G: \text{$\rho_K(t)=s(k)$ and
          $\rho_G(t)=r(x)$}}. 
    \]
    Moreover, $(k,t,x)$ is a unit if and only if $k$ and $x$ are.  In
  particular, $(k,t,x)$ acts on $(l,u)$ if $\rho_{G}(u)=s_{G}(x)$ and
  $t=l\cdot (x\cdot u)$, in which case
  $(k,t,x)\cdot (l,u)=(kl,x\cdot u)$.

  We want to see that the action of $\sd{\ac KT}G$ on $\ac KT$ is
  principal.  First we verify that this action is free: suppose
  $(k,t,x)\cdot (l,u)=(l,u)$.  Then
  \[
    (kl,x\cdot u)=(l,u)
  \]
  by definition of the action.  Thus $k\in K\units$, and
  $x\in G\units$ since $G$ acts freely on $T$.  Thus $(k,t,x)$ is
    a unit.

  Now we check that the action is proper.  For this, we use \ref{PA4}
  of Proposition~\ref{prop-proper-equiv}.  Suppose that
  $(l_{i},u_{i})\to (l,u)$ and
  $(k_{i},l_{i}\cdot (x_{i}\cdot u_{i}),x_{i})\cdot (l_{i},u_{i}) \to
  (l',u')$.  Then $(k_{i}l_{i},x_{i}\cdot u_{i})\to (l',u')$.  Since
  the $G$-action on $T$ is principal, we can assume that $x_{i}\to x$
  in $G$.  Since $l_{i}\to l$, and $k_{i}l_{i}\to l'$, we must have
  $k_{i}\to k=l'l^{-1}$.  Thus
  $\set{(k_{i}, l_{i}\cdot (x_{i}\cdot u_{i}),x_{i})}$ converges and
  the action is proper.
\end{proof}

Note that in \corref{semidirect translate}, when we replace $H$ by
$\ac KT$,
the range and source maps are
\begin{align*}
  r(k,t,x)&=(r(k),k\cdot t,r(x))
  \\
  s(k,t,x)&=(s(k),x\inv\cdot t,s(x)),
\end{align*}
and the moment map for the action $\sd{\ac KT}G\actleft \ac KT$ is
given by
\[
  \rho'(k,t)=(r(k),k\cdot t,\rho_G(t)).
\]
Moreover, the coordinate projection
\[
  \pi_2:\sd{\ac KT}G\units\to T
\]
is a homeomorphism.

\subsection*{Semidirect product actions on Fell bundles}

Let $p:\AA\to H$ be a Fell bundle and assume that $G$ acts on $\AA$ by
isomorphisms.  Let $q:\EE\to T$ be an upper semicontinuous Banach
bundle on which $G$ acts.  We assume that the actions
$e\mapsto x\cdot e$ on the fibres are Banach space
isomorphisms.
  We
also want $\AA$ to act on $\EE$ as in \defnref{def:fell-bundle-equiv}.

\begin{defn}
  We say that the actions of $G$ and $\AA$ on $\EE$ are
  \emph{covariant} if the underlying actions of $G$ and $H$ on $T$ are
  covariant and
  \begin{equation}
    \label{eq:45}
    x\cdot (a\cdot e)=(x\cdot a)\cdot (x\cdot e)
    \quad\text{whenever $(x, a)\in G*\AA$ and $(a,e)\in \AA*\EE$.}
  \end{equation}
\end{defn}
Note that it follows as in Lemma~\ref{lem-john1} that
if $(x, a)\in G*\AA$ and $(a,e)\in \AA*\EE$, then both sides of
\eqref{eq:45} are defined.

\begin{cor}\label{Fell Semi Action}
  With the above notation, if the actions of $G$ and $\AA$ on $\EE$
  are covariant, then the semidirect-product Fell bundle $\sd \AA G$
  acts on $\EE$ by
  \[
    (a,x)\cdot e=a\cdot (x\cdot e)
  \]
  whenever the right-hand side makes sense.
\end{cor}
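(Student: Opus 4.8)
The plan is to run the proof of \lemref{covariant} one bundle level higher, verifying against the candidate pairing
\[
  \sd\AA G*\EE\to\EE,\qquad \bigl((a,x),e\bigr)\mapsto a\cdot(x\cdot e),
\]
the axioms~(A1)--(A3) for a Fell bundle acting on an upper-semicontinuous Banach bundle. Since the underlying $G$- and $H$-actions on $T$ are covariant by hypothesis, \lemref{covariant} already supplies an action of $\hsdg$ on $T$ determined by $(h,x)\cdot t=h\cdot(x\cdot t)$, and $\bar p(a,x):=(p(a),x)$ exhibits $\sd\AA G$ as the semidirect-product Fell bundle over $\hsdg$ covering this action on $T$; so what remains is only to produce the $\sd\AA G$-action on $\EE$ lying over it.

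First I would dispose of well-definedness: the right-hand side $a\cdot(x\cdot e)$ makes sense exactly when $(x,e)\in G*\EE$ and $(a,x\cdot e)\in\AA*\EE$, and—because covariance of the underlying actions forces the moment maps on $T$ to commute and $\rho_{H}$ to be $G$-equivariant—this is precisely the condition $\bigl((a,x),e\bigr)\in\sd\AA G*\EE$, by the same matching of domains carried out in \lemref{lem-john1} (the $\EE$-level analogue of the identity $L=R$ there). For~(A1), since $G$ acts on $\EE$ covering its action on $T$ and $\hsdg$ acts on $T$ by \eqref{semi act},
\[
  q\bigl(a\cdot(x\cdot e)\bigr)=p(a)\cdot q(x\cdot e)=p(a)\cdot\bigl(x\cdot q(e)\bigr)=\bigl(p(a),x\bigr)\cdot q(e)=\bar p(a,x)\cdot q(e).
\]
Bilinearity on the fibre $A(p(a),x)\times E(q(e))\cong A(p(a))\times E(q(e))$ is immediate, since $e\mapsto x\cdot e$ is a Banach-space isomorphism and $\AA$ acts bilinearly on $\EE$.

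For~(A2), if $\bigl((a,x),(b,y)\bigr)\in\sd\AA G\pairs$ then $(a,x)(b,y)=\bigl(a(x\cdot b),xy\bigr)$, and using $(xy)\cdot e=x\cdot(y\cdot e)$, associativity of the $\AA$-action, and the covariance identity \eqref{eq:45},
\[
  \bigl((a,x)(b,y)\bigr)\cdot e=(a(x\cdot b))\cdot\bigl(x\cdot(y\cdot e)\bigr)=a\cdot\bigl((x\cdot b)\cdot(x\cdot(y\cdot e))\bigr)=a\cdot\bigl(x\cdot(b\cdot(y\cdot e))\bigr)=(a,x)\cdot\bigl((b,y)\cdot e\bigr),
\]
which is exactly the shuffle performed in \lemref{covariant}. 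For~(A3), $\|(a,x)\cdot e\|=\|a\cdot(x\cdot e)\|\le\|a\|\,\|x\cdot e\|=\|a\|\,\|e\|=\|(a,x)\|\,\|e\|$, using that the $G$-action is isometric on fibres and that the norm on the fibre of $\sd\AA G$ over $(h,x)$ is that of $A(h)$. Continuity of the pairing is routine: if $(a_i,x_i)\to(a,x)$ and $e_i\to e$ with $\bigl((a_i,x_i),e_i\bigr)\in\sd\AA G*\EE$, then $x_i\cdot e_i\to x\cdot e$ by continuity of the $G$-action, hence $a_i\cdot(x_i\cdot e_i)\to a\cdot(x\cdot e)$ by continuity of the $\AA$-action. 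The only two steps requiring any care are the domain matching in the well-definedness step and the use of \eqref{eq:45} in verifying~(A2); neither is a genuine obstacle, since both are built into the covariance hypothesis precisely so that this bookkeeping closes up, exactly as in the base-space case \lemref{covariant}.
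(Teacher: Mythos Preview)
Your proof is correct and follows essentially the same approach as the paper: verify continuity of the pairing and the axioms (A1)--(A3) directly from the covariance hypothesis and the corresponding base-space result \lemref{covariant}. The paper's own proof is briefer---it writes out the continuity argument explicitly and then declares (A1)--(A3) to be ``routine exercises in the definitions''---whereas you have carried out those routine exercises in full, including the domain-matching discussion and the explicit use of \eqref{eq:45} in the associativity step.
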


\begin{proof}
  First of all, to see that the pairing is
  continuous, let $((a_i,x_i),e_i))\to ((a,x),e)$ in $\sd \AA G*\EE$.
  Then $a_i\to a$, $x_i\to x$, and $e_i\to e$, so
  $x_i\cdot e_i\to x\cdot e$, and then
  $a_i\cdot (x_i\cdot e_i)\to a\cdot (x\cdot e)$, so
  $(a_i,x_i)\cdot e_i\to (a,x)\cdot e$.  Routine exercises in the
  definitions show that the pairing covers the action
  $\ac HG\actleft T$, $((a,x)(b,y))\cdot e=(a,x)\cdot ((b,y)\cdot e)$,
  and $\|(a,x)\cdot e\|\le \|(a,x)\|\|e\|$.
\end{proof}

\section{Quotient action}

\begin{prop}\label{prop-quot-act}
  Let $G$ and $H$ be locally compact Hausdorff groupoids, and let $G$
  act on $H$ principally and by isomorphisms.  Then there is a
  principal right action of the quotient groupoid $G\under H$ on $H$,
  given by
  \begin{equation}\label{eq-quot-act}
    h\cdot(G\cdot k)=hk'
  \end{equation}
  whenever $s(h)\in G\cdot r(k)$, where $k'$ is the unique
  representative of $G\cdot k$ with $r(k')=s(h)$.  Moreover, the
  associated moment map $H\to (G\under H)\units$ is open.
\end{prop}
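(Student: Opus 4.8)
The plan is to verify, in order, that \eqref{eq-quot-act} is well defined; that it gives a right action of $K:=G\under H$ on $H$ with moment map $\sigma(h)=G\cdot s_{H}(h)$; that this action is continuous; that it is free and proper; and finally that $\sigma$ is open.

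For the first two points, note first that since $G$ acts freely on $H$, for each $h\in H$ and each orbit $G\cdot k$ with $G\cdot r_{H}(k)=G\cdot s_{H}(h)$ there is exactly one representative $k'\in G\cdot k$ with $r_{H}(k')=s_{H}(h)$: if $x\cdot k'$ has the same range then $x\cdot r_{H}(k')=r_{H}(k')$, forcing $x$ to be a unit. Hence $h\cdot(G\cdot k):=hk'$ depends only on the orbit. Using $r_{K}(G\cdot k)=G\cdot r_{H}(k)$ and $s_{K}(G\cdot k)=G\cdot s_{H}(k)$ from \propref{prop-groupoid}, one checks that $\sigma(h\cdot(G\cdot k))=G\cdot s_{H}(k')=s_{K}(G\cdot k)$, that the unit $G\cdot s_{H}(h)$ acts trivially on $h$ (its representative with range $s_{H}(h)$ is $s_{H}(h)$ itself), and that $(h\cdot\kappa)\cdot\lambda=h\cdot(\kappa\lambda)$ whenever defined: writing $\kappa=G\cdot k$ and $\lambda=G\cdot l$ and choosing representatives $k',l'$ with $r_{H}(k')=s_{H}(h)$ and $r_{H}(l')=s_{H}(k')$, \lemref{lem-tech} gives $\kappa\lambda=G\cdot(k'l')$, whose representative with range $s_{H}(h)=r_{H}(k'l')$ is $k'l'$, so both sides equal $hk'l'$.

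The continuity of $(h,G\cdot k)\mapsto hk'$ is the step that requires care, and I expect it to be the main obstacle. Suppose $(h_{i},G\cdot k_{i})\to(h,G\cdot k)$; since it suffices to show that every subnet admits a further subnet along which $h_{i}k_{i}'\to hk'$, let $k'$ be the representative of $G\cdot k$ with $r_{H}(k')=s_{H}(h)$. The quotient map $q:H\to K$ is open (\propref{prop-groupoid} together with \cite{wil:toolkit}*{Proposition~2.12}, since $G$ has an open range map), so by Fell's Criterion (\cite{hkqwsemi}*{Lemma~2.1}) we may pass to a subnet and choose $\tilde k_{i}\to k'$ with $\tilde k_{i}\in G\cdot k_{i}$. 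Write $k_{i}'=x_{i}\cdot\tilde k_{i}$ with $x_{i}\in G$. In the free and proper $G$-space $\ho$ we then have $r_{H}(\tilde k_{i})\to s_{H}(h)$ while $x_{i}\cdot r_{H}(\tilde k_{i})=r_{H}(k_{i}')=s_{H}(h_{i})\to s_{H}(h)$, so \corref{cor-principal-sc} forces $x_{i}\to\rho_{G}^{H}(k')$, the unit of $G$ fixing $k'$. Hence $k_{i}'=x_{i}\cdot\tilde k_{i}\to k'$ and, by continuity of multiplication in $H$, $h_{i}k_{i}'\to hk'$.

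It remains to prove freeness, properness, and openness of $\sigma$. Freeness is immediate: $hk'=h$ forces $k'=s_{H}(h)$, so $G\cdot k=\sigma(h)$ is a unit. For properness I would check item~\ref{PA4} of \propref{prop-proper-equiv} in its right-action form: if $h_{i}\to h$ and $h_{i}k_{i}'\to h'$, then $r_{H}(h_{i})=r_{H}(h_{i}k_{i}')$ converges to both $r_{H}(h)$ and $r_{H}(h')$, so $r_{H}(h)=r_{H}(h')$; hence $k_{i}'=h_{i}^{-1}(h_{i}k_{i}')\to h^{-1}h'$ and therefore $G\cdot k_{i}=q(k_{i}')\to q(h^{-1}h')$ in $K$, so the action is proper and, being also free, principal. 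Finally $\sigma$ factors as $s_{H}:H\to\ho$ followed by the quotient map $\ho\to G\under\ho=K\units$; the first is open because $H$ has a Haar system (so its range---hence its source---map is open), and the second is open because $G$ has an open range map, so $\sigma$ is open. (Alternatively, one could deduce the whole proposition by transporting the principal right $K$-action on $\ac K\ho$ from \lemref{G on pullback}(c) along the $G$-equivariant groupoid isomorphism $H\cong\ac K\ho$ of \thmref{classify}, after checking that the transported formula is \eqref{eq-quot-act}; but the direct argument seems cleaner to record.)
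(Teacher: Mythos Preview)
Your proof is correct. Interestingly, the route you relegate to a parenthetical alternative at the end is precisely the one the paper takes: it invokes \thmref{classify} to replace $H$ by $\ac KT$ with $K=G\under H$ and $T=H\units$, then reads off the principal right $K$-action and the openness of the moment map from \lemref{G on pullback}(c), finally transporting back along the isomorphism. Your main argument instead verifies everything directly on $H$: well-definedness from freeness, the action axioms via \lemref{lem-tech}, continuity via Fell's Criterion on the open quotient map together with \corref{cor-principal-sc}, properness via (PA\ref{PA4}), and openness of $\sigma$ as the composite $q|\circ s_{H}$. The paper's approach is shorter and exhibits the structural reason the action is principal (it is literally right translation in $K$ in the model $\ac KT$), at the cost of invoking the classification machinery; your direct argument is self-contained and makes explicit where each hypothesis (freeness, properness on $\ho$, open range maps) is used, which is pedagogically valuable even if a bit longer to record.
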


\begin{proof}
  Note first of all that \eqref{eq-quot-act} makes sense because $G$
  acts freely by isomorphisms.  By \thmref{classify} we can replace
  $H$ by $\ac KT$, where $K=G\under H$, $T=H\units$, $G$ acts on
  $\ac KT$ by $x\cdot (k,t)=(k,x\cdot t)$, and the actions of $G$ and
  $K$ on $T$ commute.  Then by \lemref{G on pullback}, $K$ acts
  principally on the right of $\ac KT$ by
  \[
    (k,t)\cdot \ell=(k\ell,\ell\inv\cdot t).
  \]
  Moreover, the isomorphism $G\under(\ac KT)\simeq K$ of \lemref{G on
    pullback} transforms the action $(\ac KT)\actright K$ into the
  action of $G\under(\ac KT)$ described in the statement of the
  proposition, and hence this action is principal.

  Furthermore, under the isomorphism the moment map
  $\sigma:\ac KT\to K\units$ is
  \[
    (k,t)\mapsto s(k),
  \]
  which is open because it is a composition of two open maps.
\end{proof}

\begin{cor}
  \label{cor-quot-act-bundle} 
  Let $G$ and $H$ be locally compact groupoids, and let $p:\AA\to H$
  be a Fell bundle.  Suppose that $G$ act principally and by
  isomorphisms on $\AA$.  Then the orbit Fell bundle $\gua$ acts on
  the right of the Banach bundle $\AA$ by
  \begin{align}
    \label{eq:46}
    a\cdot (G\cdot b)=ab'
  \end{align}
  whenever $s(a)\in G\cdot r(b)$, and where $b'$ is the unique
  representative of $G\cdot b$ with $s(a)=r(b)$.
\end{cor}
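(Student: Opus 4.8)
The plan is to deduce this from \propref{prop-quot-act} via the Fell-bundle classification, in direct analogy with the way \propref{prop-quot-act} itself was proved from \thmref{classify} and \lemref{G on pullback}. First I would apply \thmref{classify Fell}: since $G$ acts principally and by isomorphisms on $\AA$, there is an action of $K:=\guh$ on $\ho$ for which $\AA$ is $G$-equivariantly isomorphic to the action Fell bundle $\ac\BB\ho\to\ac K\ho$, where $\BB:=\gua$, the $G$-action is $x\cdot(b,t)=(b,x\cdot t)$, and --- via the identification $H\cong\ac K\ho$ of \thmref{classify} together with \lemref{G on pullback} --- the $G$- and $K$-actions on $\ho$ commute. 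Then \lemref{G on pullback Fell}(c) provides a right action of $\BB=\gua$ on the Banach bundle $\ac\BB\ho$, namely $(b,t)\cdot c=(bc,\bar p(c)\inv\cdot t)$, which covers the right action of $\guh$ on $H$ from \propref{prop-quot-act} under the isomorphism $G\under(\ac K\ho)\simeq K$.

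Second, I would transport this action back along the $G$-equivariant Fell-bundle isomorphism $\AA\cong\ac\BB\ho$ and along the quotient map $\bar q:\AA\to\gua$, which realizes $\gua\simeq G\under(\ac\BB\ho)$ by \lemref{G on pullback Fell}(b), and verify that the transported action is \eqref{eq:46}. Writing $a=(b_0,t_0)$ with $b_0=\bar q(a)$ and $t_0=s_H(p(a))$, and an element $G\cdot b$ of $\gua$ as $c=\bar q(b)=\bar q(b')$, the formula $(b_0,t_0)\cdot c=(b_0c,\bar p(c)\inv\cdot t_0)$ corresponds under these identifications to $ab'$, precisely because $\bar p(c)\inv\cdot s_H(p(a))=s_H(p(b'))$; this last equality is the Fell-bundle form (used in the proof of \thmref{classify Fell}) of the identity $r_H(h)=q(h)\cdot s_H(h)$ recorded in the remark after \thmref{classify}, applied to $b'$ and combined with the equality $r_H(p(b'))=s_H(p(a))$ that the hypotheses $s(a)\in G\cdot r(b)$ and $r(b')=s(a)$ force. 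The well-definedness noted in the statement holds because $G$ acts freely by isomorphisms, so $G\cdot b$ has a unique representative $b'$ with $r(b')=s(a)$.

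Third, the conditions defining a right action of the Fell bundle $\gua$ on the Banach bundle $\AA$ --- continuity of $(a,G\cdot b)\mapsto a\cdot(G\cdot b)$, fibrewise bilinearity, the covering identity $p\bigl(a\cdot(G\cdot b)\bigr)=p(a)\cdot(G\cdot p(b))$, associativity $(a\cdot c)\cdot d=a\cdot(cd)$, and $\|a\cdot c\|\le\|a\|\,\|c\|$ --- are inherited directly from \lemref{G on pullback Fell}(c): continuity because in $(b,t)\cdot c=(bc,\bar p(c)\inv\cdot t)$ the first coordinate is multiplication in $\BB$ and the second is the $K$-action on $\ho$; fibrewise bilinearity because multiplication in $\BB$ is bilinear and $G$, hence $K$, acts by Banach-space isomorphisms on fibres; and the remaining three are the routine computations already invoked in the sketch of \lemref{G on pullback Fell}. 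I expect the only real obstacle to be bookkeeping: getting the identifications from \thmref{classify Fell}, \lemref{G on pullback Fell}, and \propref{prop-quot-act} to line up so that the transported action is \emph{literally} formula \eqref{eq:46}, which becomes a one-line computation once the dictionary $a\leftrightarrow(\bar q(a),s_H(p(a)))$ is fixed. One could instead argue directly, setting $a\cdot(G\cdot b)=ab'$ and proving continuity by hand via openness of the orbit map $\AA\to\gua$ and properness of $G\actleft\ho$ (\propref{prop-proper-equiv}) together with the usual subnet argument, as in \propref{lem-gua-usc} and \propref{prop-quot-act}; on that route the properness step would be the only non-formal point.
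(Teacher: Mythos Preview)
Your proposal is correct and follows essentially the same route as the paper: apply \thmref{classify Fell} to replace $\AA$ by $\ac\BB{H\units}$, invoke \lemref{G on pullback Fell}(c) to get the right $\BB$-action $(b,t)\cdot c=(bc,\bar p(c)\inv\cdot t)$, and then use the isomorphism $G\under(\ac\BB{H\units})\simeq\BB$ of \lemref{G on pullback Fell}(b) to see that this action transports to \eqref{eq:46}. The paper's proof is terse and omits the verification you carry out in your second paragraph, but the strategy is identical.
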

\begin{proof}
  The proof follows the lines of \propref{prop-quot-act}.  Note first
  that \eqref{eq:46} makes sense because the range map
  $r:\AA\to H\units$ is equivariant for free $G$-actions.  By
  \thmref{classify Fell} we can replace $\AA$ by $\ac \BB T$, and by
  \lemref{G on pullback Fell}, $\BB$ acts
  on the right of $\ac \BB T$ by
  \[
    (b,t)\cdot c=(bc,p(c)\cdot t).
  \]
  Moreover, the isomorphism $G\under(\ac \BB T)\simeq \BB$ of
  \lemref{G on pullback Fell} transforms the action
  $(\ac \BB T)\actright \BB$ into the action of $G\under(\ac \BB T)$
  described in the statement of the proposition.
\end{proof}

\begin{prop}
  \label{prop-basic-equiv} 
  Let $G$ and $H$ be locally compact Hausdorff groupoids, and let $G$
  act on $H$ principally and by isomorphisms.  Then $H$ is a
  $\hsdg-\guh$-equivalence.
\end{prop}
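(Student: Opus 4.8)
The plan is to verify the definition of a groupoid equivalence directly: one must produce commuting free and proper actions of $\hsdg$ on the left of the space $H$ and of $\guh$ on the right, together with open moment maps $\rho'\colon H\to\hsdg\units$ and $\sigma\colon H\to(\guh)\units$ that descend to homeomorphisms of $H/\guh$ onto $\hsdg\units$ and of $\hsdg\backslash H$ onto $(\guh)\units$. Nearly all of the analytic input is already available. The left action $(h,x)\cdot k=h(x\cdot k)$ is the one from \corref{semidirect translate}, which also establishes that it is principal (since $G$ acts principally on $H$) and that its moment map $\rho'(k)=\bigl(r_H(k),\rho_G^H(k)\bigr)$ is open, using that $H$ has a Haar system and hence an open range map. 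The right action $h\cdot(G\cdot k)=hk'$, together with its open moment map $\sigma(h)=G\cdot s_H(h)$, comes from \propref{prop-quot-act}, which also shows that action is principal.

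First I would check that the two actions commute. For $h\in H$, $x\in G$, and $G\cdot l\in\guh$ with the relevant products defined, one gets $\bigl((h,x)\cdot k\bigr)\cdot(G\cdot l)=\bigl(h(x\cdot k)\bigr)l''$ with $l''$ the representative of $G\cdot l$ satisfying $r_H(l'')=s_H(x\cdot k)=x\cdot s_H(k)$, and $(h,x)\cdot\bigl(k\cdot(G\cdot l)\bigr)=h(x\cdot k)(x\cdot l')$ with $l'$ the representative of $G\cdot l$ satisfying $r_H(l')=s_H(k)$. Since $x\cdot l'$ is a representative of $G\cdot l$ with range $x\cdot s_H(k)$, freeness of the $G$-action on $H$ forces $l''=x\cdot l'$, so the two expressions agree.

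The main step is to match each orbit space with the fibres of the appropriate moment map. For the right $\guh$-action, letting $G\cdot l$ range over all elements with $r_{\guh}(G\cdot l)=G\cdot s_H(h)$ shows the orbit of $h$ equals $\set{m\in H:r_H(m)=r_H(h)}$; since $G$ acts by isomorphisms, $\rho_G^H$ factors through $r_H$, so this orbit is precisely the fibre of $\rho'$ through $h$. Being continuous, open, and surjective (for $v\in\ho$ one has $\rho'(v)=(v,\rho_G^H(v))$), $\rho'$ descends to a continuous open bijection, hence a homeomorphism of $H/\guh$ onto $\hsdg\units$. For the left $\hsdg$-action, letting $x$ range over $\set{x\in G:s_G(x)=\rho_G^H(h)}$ sweeps $x\cdot k$ through the $G$-orbit of $h$, and left-translating by arbitrary composable elements of $H$ then exhibits the orbit of $h$ as $\set{m\in H:s_H(m)\in G\cdot s_H(h)}$, which is exactly the fibre of $\sigma$ through $h$. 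As $\sigma$ is continuous, open (\propref{prop-quot-act}), and surjective, it descends similarly to a homeomorphism of $\hsdg\backslash H$ onto $(\guh)\units$. Together with the freeness and properness noted above, this shows $H$ is a $\hsdg-\guh$-equivalence.

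I expect the only real care needed is the bookkeeping in that last step---matching moment maps to orbit spaces and observing that the second coordinate of $\rho'$ is determined by the first because $\rho_G^H=\rho_G^H\circ r_H$. One may avoid the hand computations by using \thmref{classify} to replace $H$ by an action groupoid $\ac K{\ho}$ with $K=\guh$, where $\rho'(k,t)=\bigl(r(k),k\cdot t,\rho_G(t)\bigr)$ with $\pi_2\colon\sd{\ac K{\ho}}G\units\to\ho$ a homeomorphism, the right $\guh$-action is $(k,t)\cdot\ell=(k\ell,\ell\inv\cdot t)$, and $\sigma(k,t)=s(k)$; the fibre identifications then become one-line verifications and the commuting of the two actions reduces to the commuting of the $G$- and $K$-actions on $\ho$ from \thmref{classify}\,(b).
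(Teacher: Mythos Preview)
Your proof is correct. The difference from the paper is one of packaging: you verify the equivalence axioms directly on $H$, whereas the paper immediately invokes \thmref{classify} to replace $H$ by the action groupoid $\ac KT$ (with $K=\guh$, $T=\ho$) and then checks everything in those coordinates---precisely the alternative you sketch in your final paragraph. In the paper's model the commutativity of the two actions is read off from the commuting $G$- and $K$-actions on $T$, and the orbit/fibre identifications reduce to the two short computations $r(k)=r(\ell)\Rightarrow (\ell,s)=(k,t)\cdot k^{-1}\ell$ and $s(k)=s(\ell)\Rightarrow (\ell,s)=(\ell k^{-1},r,x)\cdot(k,t)$. Your direct route avoids the dependence on \thmref{classify} and is self-contained, at the cost of the extra bookkeeping you flag (tracking the unique orbit representatives $l'$, $l''$ and using $\rho_G^H=\rho_G^H\circ r_H$); the paper's route trades that bookkeeping for an appeal to the structure theorem and gains formulas that transfer verbatim into the proof of \thmref{equiv}. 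One small slip: in your description of the left $\hsdg$-orbit you write ``sweeps $x\cdot k$ through the $G$-orbit of $h$'' where you mean $x\cdot h$.
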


\begin{remark}
  In view of 
  \cite{hkqwsemi}*{Example~6.2}, if $H$ is a principal
  $G$-space, then we recover the well-known result that  in this case 
  the action groupoid $G\rtimes H$ is equivalent to the orbit space
  $\guh$ (cf., \cite{wil:toolkit}*{Example~2.35}).
\end{remark}

\begin{proof}
  Note that by 
  \cite{hkqwsemi}*{Corollary~8.5} 
  and
  \propref{prop-quot-act}, $H$ has principal left and right actions by
  $\hsdg$ and $G\under H$, respectively.  As in the proof of
  \propref{prop-quot-act}, we use \thmref{classify} to replace $H$ by
  $\ac KT$, where $K=G\under H$, $T=H\units$, $G$ acts on $\ac KT$ by
  $x\cdot (k,t)=(k,x\cdot t)$, and the actions of $G$ and $K$ on $T$
  commute, and moreover $K$ acts on the right of $\ac KT$ by
  $(k,t)\cdot \ell=(k\ell,\ell\inv\cdot t)$.  Since $\sd{\ac KT}G$
  acts on $\ac KT$ by
  \[
    (k,t,x)\cdot (\ell,s) =(k,t)\bigl(x\cdot (\ell,s)\bigr)
    =(k,t)(\ell,x\cdot s) =(k\ell,x\cdot s)
  \]
  if $t=\ell\cdot x\cdot s$, and since the actions of $G$ and $K$ on
  $T$ commute, it is clear from the formulas that the actions of
  $\sd{\ac KT}G$ and $K$ on $\ac KT$ commute.

  If $\rho'$ and $\sigma$ are the moment maps for the
    $\sd{\ac KT}G$- and $K$-actions, respectively, then they are open
    by 
  \cite{hkqwsemi}*{Corollary~8.5} 
    and \propref{prop-quot-act}.
    Since $K$ and $\ac KT$ have open range and source maps by
    Proposition~\ref{prop-groupoid}, the quotient maps for the 
    left and right actions are also open.  Hence it suffices to show that
    the induced maps $\jqphi$ and $\jqpsi$
    making the diagram
  \begin{equation}
    \label{eq:23}
    \begin{tikzcd}[column sep=2cm, row sep = 1cm]
    \sd{\ac KT}G\units & \ac KT \arrow[r,"\sigma"] \arrow[l,"\rho'",swap]
    \arrow[dl,"Q"] \arrow[dr,"Q",swap] & K\units \\
    (\ac KT)/K \arrow[u,"\jqphi",dashed]&& \sd{\ac KT}G\under (\ac KT)
    \arrow[u,"\jqpsi",dashed,swap] 
    \end{tikzcd}
  \end{equation}
  commute
  are bijections, where the $Q$'s are the appropriate quotient maps.
  (See \cite{wil:toolkit}*{Remark~2.31}.)  Thus it suffices to show
  that for all $(k,t),(\ell,s)\in \ac KT$, if
  $\rho'(k,t)=\rho'(\ell,s)$ then $(\ell,s)\in (k,t)\cdot K$, and if
  $\sigma(k,t)=\sigma(\ell,s)$ then
  $(\ell,s)\in \sd{\ac KT}G\cdot (k,t)$.

  For the first, we have $r(k)=r(\ell)$, so $\ell=kk\inv \ell$, and
  $k\cdot t=\ell\cdot s$, so $s=\ell\inv k\cdot t$, and hence
  $(\ell,s)=(k,t)\cdot k\inv \ell$.

  For the second, we have $s(k)=s(\ell)$, so $\ell=\ell k\inv k$.  We 
  also have $\rho_K(t)=\rho_K(s)$.  Since $\rho_{K}$ is the
  restriction of the quotient map to $T=H\units$ for the principal $G$
  action on $H$, we have $s=x\cdot t$ for a unique $x\in G$.  Hence
  $(\ell,s)=(\ell k\inv,r,x)\cdot (k,t)$ with
  $r=k\cdot x\cdot t$.
\end{proof}

\section{Principal Action Groupoids}

In this section we prove our main result, the equivalence theorem,
in two forms: Theorems~\ref{equiv} and \ref{one sided},
since each is appropriate in its own
setting.

\begin{thm}\label{equiv}
Suppose that \lcg s $G$ and $K$ have commuting actions on a \lchs\ $T$
such that the moment map $\rho_K:T\to K\units$ is a principal $G$-bundle.
Suppose further that $\BB\to K$ is a Fell bundle.
Then the action Fell bundle $\ac \BB T$ becomes a
$\sd{\ac \BB T}G-\BB$-equivalence with operations given as follows:
\begin{enumerate}[label=\textup{(\arabic*)}]
  \item\label{it:equivleftmod}
$\sd{\ac \BB T}G$ acts on the left of the Banach bundle $\ac \BB T$ by
\[
(b,t,x)\cdot (c,s)=(bc,x\cdot s)
\righttext{if}s(b)=r(c),t=x\cdot p(c)\cdot s;
\]

  \item\label{it:equivleftip}
the left inner product is given by
\[
\pre *\<(b,t),(c,s)\>=(bc^*,x\cdot p(c)\cdot s,x)
\righttext{if}s(b)=s(c),t=x\cdot s;
\]

  \item\label{it:equivrightmod}
$\BB$ acts on the right of $\ac \BB T$ by
\[
(b,t)\cdot c=(bc,p(c)\inv\cdot t)
\righttext{if}s(b)=r(c);
\]

  \item\label{it:equivrightip}
the right inner product is given by
\[
\<(b,t),(c,s)\>_*=b^*c
\righttext{if}r(b)=r(c).
\]
\end{enumerate}
Consequently,
if the Haar system on the action groupoid $\ac KT$ is $G$-invariant, then
letting $\alpha$ denote the associated action of $G$ on $\cs(\ac KT,\ac \BB T)$
from \cite{hkqwsemi}*{Lemma~7.2},
the crossed product $\cs(\ac K T,\ac \BB T)\rtimes_\alpha G$
is Morita equivalent to the Fell-bundle \cs-algebra $\cs(K,\BB)$.
\end{thm}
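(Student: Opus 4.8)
The plan has two stages. First I would show that $\ac\BB T$, with the four operations displayed, is a $\sd{\ac\BB T}G$--$\BB$-equivalence in the sense of \defnref{def:fell-bundle-equiv}; then the Morita equivalence of Fell-bundle algebras drops out of the Equivalence Theorem \cite{mw:fell}*{Theorem~6.4} combined with the description of a groupoid semidirect-product Fell-bundle algebra as a crossed product from \cite{hkqwsemi}.

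For the first stage, almost all of the needed structure is already in place. Taking $H=\ac KT$ --- on which $G$ acts principally and by isomorphisms via $x\cdot(k,t)=(k,x\cdot t)$ --- \propref{prop-basic-equiv} makes $\ac KT$ a $\sd{\ac KT}G$--$\bigl(G\under\ac KT\bigr)$-equivalence, and composing with the groupoid isomorphism $G\under\ac KT\simeq K$ of \lemref{G on pullback}(b) turns it into a $\sd{\ac KT}G$--$K$-equivalence, carrying the left $\sd{\ac KT}G$-action of \corref{semidirect translate} and the right $K$-action $(k,t)\cdot\ell=(k\ell,\ell\inv\cdot t)$ of \lemref{G on pullback}(c). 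The left $\sd{\ac\BB T}G$-module structure on $\ac\BB T$ covering the former is the translation action supplied by \corref{Fell Semi Action} (the $G$-action and the left-multiplication action of the Fell bundle $\ac\BB T$ being covariant, as in the Fell-bundle analogue of \exref{ex-auto-coherent}), and the right $\BB$-module structure is the one produced by \lemref{G on pullback Fell}(c), equivalently by \corref{cor-quot-act-bundle} transported along $G\under\ac\BB T\simeq\BB$; these two actions commute by inspection, which is (E1). The inner products are the displayed formulas; the left one $\pre*\<(b,t),(c,s)\>$ is defined exactly when $s(b)=s(c)$, and then $\rho_K(t)=s_K(p(b))=s_K(p(c))=\rho_K(s)$, so --- since $\rho_K\colon T\to K\units$ is a principal $G$-bundle --- there is a \emph{unique} $x\in G$ with $t=x\cdot s$, which is what makes that formula unambiguous and lands it in $\sd{\ac\BB T}G$. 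Conditions (E2)(a)--(c) are then routine bookkeeping with the range, source, and moment-map formulas recorded after \corref{semidirect translate}, and (E2)(d), the identity $\pre*\<e,f\>\cdot g=e\cdot\<f,g\>_*$, reduces to associativity of multiplication in $\BB$ once both sides are pushed into a single fibre. Finally, over $(k,t)\in\ac KT$ the fibre of $\ac\BB T$ is $B(k)$, while the relevant unit fibres of $\sd{\ac\BB T}G$ and of $\BB$ are $B(r_K(k))$ and $B(s_K(k))$; so (E3) holds precisely because $\pb\colon\BB\to K$ is a Fell bundle, $B(k)$ being a $B(r_K(k))\sme B(s_K(k))$-\ib, once one checks that the equivalence inner products become the intrinsic ones under these identifications.

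For the second stage, applying the Equivalence Theorem \cite{mw:fell}*{Theorem~6.4} to this equivalence gives a Morita equivalence
\[
\cs\bigl(\sd{\ac KT}G,\ \sd{\ac\BB T}G\bigr)\ \sim_M\ \cs(K,\BB),
\]
which is legitimate because $K$ supports a Haar system (it carries $\BB$) and, when the Haar system on $\ac KT$ is $G$-invariant, the semidirect-product groupoid $\sd{\ac KT}G$ inherits a Haar system from those of $\ac KT$ and $G$ by \cite{hkqwsemi}. Under that same $G$-invariance hypothesis, \cite{hkqwsemi}*{Lemma~7.2} provides the action $\alpha$ of $G$ on $\cs(\ac KT,\ac\BB T)$ and, together with the semidirect-product/crossed-product identification proved there, an isomorphism $\cs\bigl(\sd{\ac KT}G,\sd{\ac\BB T}G\bigr)\cong\cs(\ac KT,\ac\BB T)\rtimes_\alpha G$; composing this with the displayed Morita equivalence finishes the proof.

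The algebra here is light --- every nontrivial identity collapses into one fibre of $\BB$ --- so I expect the main obstacle to be the first stage's bookkeeping: verifying that all four operations are simultaneously defined exactly when each axiom of \defnref{def:fell-bundle-equiv} requires, and in particular that the element $x\in G$ implicitly pinned down by the left inner product is carried consistently through the identities (E2)(b)--(d). That is where essentially all of the effort will go.
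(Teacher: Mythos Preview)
Your proposal is correct and follows essentially the same approach as the paper: establish the Fell-bundle equivalence by invoking \propref{prop-basic-equiv} for the underlying space equivalence, observe that the algebraic axioms reduce fibrewise to the Fell-bundle axioms for $\BB$, and then combine \cite{mw:fell}*{Theorem~6.4} with the Haar-system and crossed-product results of \cite{hkqwsemi} (Theorems~6.4 and~7.3 there) to obtain the Morita equivalence. If anything, you are slightly more explicit than the paper in tracing the module structures back to \corref{Fell Semi Action} and \lemref{G on pullback Fell}, and in noting why the $x$ in the left inner product is uniquely determined.
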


\begin{proof}
Since 
the actions of $G$ and $K$ on $T$ commute,
it is clear from the formulas that the actions of
$\sd{\ac \BB T}G$ and $\BB$ on $\ac \BB T$ commute.

We showed in the proof of \propref{prop-basic-equiv} that
$\ac KT$ is a $(\sd{\ac KT}G,K)$-equivalence.

We write
the 
other two bundle projections as
\begin{align*}
p':\ac \BB T\to \ac KT&\midtext{given by}p'(b,t)=(p(b),t)\\
p'':\sd{\ac \BB T}G\to \sd{\ac KT}G&\midtext{given by}
p''(b,t,x)=(p(b),t,x).
\end{align*}

Routine computations verify the conditions of
\cite{mw:fell}*{Section~6}:
\begin{enumerate}[(a),nosep]
\item
$p''\bigl(\pre *\<(b,t),(c,s)\>\bigr)
=\pre *[p'(b,t),p'(c,s)]$
and
\\$p\bigl(\<(b,t),(c,s)\>_*\bigr)=[p'(b,t),p'(c,s)]_*$;

\item
$\pre *\<(b,t),(c,s)\>^*=\pre *\<(c,s),(b,t)\>$
and
$\<(b,t),(c,s)\>_*=\<(c,s),(b,t)\>$;

\item
$\<(b,t,x)\cdot (c,s),(d,u)\>=(b,t,x)\<(c,s),(d,u)\>$
and
\\$\<((b,t),(c,s)\cdot d\>_*=\<(b,t),(c,s)\>_*d$;

\item\label{it:ip}
with the above module actions and inner products,
for each $(k,t)\in \ac KT$,
$(\ac \BB T)_{(k,t)}$
is a
$\sd{\ac \BB T}G_{\rho'(k,t)}-\BB_{\sigma(k,t)}$ \ibm.
\end{enumerate}
Note that
\begin{align*}
(\ac \BB T)_{(k,t)}&=B_k\times \{t\}\\
\sd{\ac \BB T}G_{(k,t,x)}&=B_k\times \{(t,x)\}\\
\rho(b,t)'&=\bigl(r(b),p(b)\cdot t,\rho_G(t)\bigr)\\
\sigma(b,t)&=s(b),
\end{align*}
so~\ref{it:ip}
follows immediately from the Fell-bundle  axiom that
$B_k$ is a $B_{r(k)}-B_{s(k)}$ \ibm.

Now assume that the Haar system on the action groupoid $\ac KT$ is
$G$-invariant. 
Then $\sd {\ac KT}G$ admits a Haar system by
\cite{hkqwsemi}*{Theorem~6.4}.
Thus \cite{mw:fell}*{Theorem 6.4} applies, and we have a
Morita equivalence  between $\cs(\sd {\ac KT}G,\sd {\ac \BB T}G)$ and
$\cs(K,\BB)$.   Then by 
\cite{hkqwsemi}*{Theorem~7.3}, we have
a Morita equivalence
  \begin{equation}\label{one side morita}
    C^*(\ac KT,\ac \BB T)\rtimes_\alpha G\sim C^*(K,\BB),
  \end{equation}
  as required.  
\end{proof}

The following alternative version of \thmref{equiv}
is a groupoid version of \cite{kmqw2}*{Corollary~3.9}.

\begin{thm}\label{one sided}
  Let a \lcg\ $G$ act on a Fell bundle $p:\AA\to H$ principally and by
  isomorphisms.  Then $\AA$ becomes an $\sd\AA G-G\under \AA$-equivalence with operations given as follows:
  \begin{enumerate}[label=\textup{(\arabic*)}]
  \item\label{it:oneleftmod}
  $\sd\AA G$ acts on the left of the Banach bundle $\AA$ by
    \[
      (a,x)\cdot b=a(x\cdot b)\midtext{if}s(a)=x\cdot r(b);
    \]

  \item\label{it:oneleftip}
  the left inner product is given by
    \[ {}_L\<a,b\>=(a(x\cdot b^*),x)\midtext{if}G\cdot s(a)=G\cdot
      s(b),
    \]
    where $x$ is the unique element of $G$ such that
    $s(a)=x\cdot s(b)$;

  \item\label{it:onerightmod}
  $G\under \AA$ acts on the right of $\AA$ by
    \[
      a\cdot (G\cdot b)=ab\midtext{if}s(a)=r(b);
    \]

  \item\label{it:onerightip}
  the right inner product is given by
    \[
      \<a,b\>_R=G\cdot a^*b\midtext{if}r(a)=r(b).
    \]
  \end{enumerate}
  Consequently, if the Haar system on $H$ is $G$-invariant, then
  letting $\alpha$ denote the associated action of $G$ 
  on $C^*(H,\AA)$ from 
  \cite{hkqwsemi}*{Lemma~7.2}, 
  the crossed product
  $\cs(H,\AA)\rtimes_{\alpha}G$ is Morita equivalent to the Fell-bundle \cs-algebra
  $\cs(G\under H,G\under \AA)$. 
\end{thm}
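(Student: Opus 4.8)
The plan is to deduce this from \thmref{equiv} by using \thmref{classify Fell} to put $\AA\to H$ into the ``action Fell bundle'' normal form.  Concretely, I would set $K:=G\under H$, $T:=H\units$, and $\BB:=G\under\AA=\gua$.  By \thmref{classify} there is an action of $K$ on $T$ for which the moment map $q|\colon T\to K\units$ is a principal $G$-bundle commuting with the $G$-action, together with a $G$-equivariant groupoid isomorphism $H\cong\ac KT$; by \thmref{classify Fell} this lifts to a $G$-equivariant Fell-bundle isomorphism $\AA\cong\ac\BB T$ covering it.  These isomorphisms are compatible with passing to quotients (so $G\under H\cong K$ and $G\under\AA\cong\BB$, as already noted after \lemref{G on pullback} and \lemref{G on pullback Fell}) and with forming semidirect products (so $\sd HG\cong\sd{\ac KT}G$ and $\sd\AA G\cong\sd{\ac\BB T}G$), so every piece of structure appearing in the statement can be transported between the two pictures.

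Next I would invoke \thmref{equiv}: it makes $\ac\BB T$ into an $\sd{\ac\BB T}G$--$\BB$-equivalence, with underlying space equivalence $\ac KT$ regarded as a $(\sd{\ac KT}G,K)$-equivalence (this is exactly what was checked in the proof of \propref{prop-basic-equiv}).  Transporting this equivalence structure back along the isomorphisms of the first step makes $\AA$ into an $\sd\AA G$--$G\under\AA$-equivalence whose underlying space equivalence is $H$, which is a $(\sd HG,G\under H)$-equivalence by \propref{prop-basic-equiv}.  The remaining point is to verify that the four transported operations are exactly the ones displayed in \ref{it:oneleftmod}--\ref{it:onerightip}.  I would do this by direct substitution, using the isomorphism $\Theta\colon a\mapsto\bigl(\bar q(a),s_{H}(p(a))\bigr)$ from the proof of \thmref{classify Fell} together with $\theta_{s}\colon h\mapsto\bigl(q(h),s_{H}(h)\bigr)$ from the proof of \thmref{classify}: for example the right $G\under\AA$-action $a\cdot(G\cdot b)=ab$ of \corref{cor-quot-act-bundle} becomes, under $\Theta$ and the isomorphism $G\under(\ac\BB T)\simeq\BB$ of \lemref{G on pullback Fell}, the action $(b,t)\cdot c=(bc,p(c)\inv\cdot t)$ of \lemref{G on pullback Fell}\,(c), which is \ref{it:equivrightmod}; the left $\sd\AA G$-action and the two inner products match \ref{it:equivleftmod}, \ref{it:equivleftip}, \ref{it:equivrightip} in the same way, and the equivalence axioms (E1)--(E3) of \defnref{def:fell-bundle-equiv} then hold for $\AA$ because they hold for the model in \thmref{equiv}.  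This bookkeeping---reconciling the intrinsic formulas of \thmref{one sided} with the pullback formulas of \thmref{equiv}---is the one genuinely fiddly step; everything else is formal.

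Finally, for the ``consequently'' clause I would assume the Haar system on $H$ is $G$-invariant, transport it along the $G$-equivariant isomorphism $H\cong\ac KT$ to obtain a $G$-invariant Haar system on the action groupoid $\ac KT$, and observe that the corresponding action $\alpha$ of $G$ on $\cs(H,\AA)$ from \cite{hkqwsemi}*{Lemma~7.2} is carried, by the induced isomorphism $\cs(H,\AA)\cong\cs(\ac KT,\ac\BB T)$, to the $G$-action appearing in \thmref{equiv}.  Hence $\cs(H,\AA)\rtimes_{\alpha}G\cong\cs(\ac KT,\ac\BB T)\rtimes_{\alpha}G$, and the last assertion of \thmref{equiv} gives a Morita equivalence between this crossed product and $\cs(K,\BB)=\cs(G\under H,G\under\AA)$, as required.
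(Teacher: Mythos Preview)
Your proposal is correct and follows essentially the same route as the paper: reduce to the action--Fell-bundle model via \thmref{classify Fell}, apply \thmref{equiv}, and transport the equivalence structure back, checking that the formulas match. The only minor deviation is in the ``consequently'' clause: the paper obtains the Morita equivalence by noting that $\sd HG$ and $\guh$ carry Haar systems (via \cite{hkqwsemi}*{Lemma~6.4} and \propref{prop-haar-guh}), applying \cite{mw:fell}*{Theorem~6.4} directly to the Fell-bundle equivalence just established, and then invoking \cite{hkqwsemi}*{Theorem~7.3}; you instead transport the Haar system to $\ac KT$ and quote the packaged conclusion of \thmref{equiv}. Both arguments are valid and amount to the same thing, since the proof of \thmref{equiv} uses exactly those ingredients.
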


\begin{rem}
  One could prove a symmetric version of \thmref{one sided}, as in
  \cite[Theorem~3.1]{kmqw2}, but we have no applications of such a
  result, so we omit it.
\end{rem}

\begin{proof}
Note that by 
\cite{hkqwsemi}*{Corollary~8.7} and \corref{cor-quot-act-bundle},
$\AA$ has left and right actions by
$\sd \AA G$ and $G\under \AA$,
respectively.
Items 
\ref{it:oneleftmod} and \ref{it:onerightmod}
in the current theorem just reiterate the formulas for convenient reference.

We use \thmref{classify Fell}
to replace $\AA$ by $\ac \BB T$,
where $\BB=G\under \AA$,
$T=H\units$,
$G$ acts on $\ac \BB T$ by $x\cdot (b,t)=(b,x\cdot t)$,
and the actions of $G$ and $K$ on $T$ commute,
and moreover $\BB$ acts on the right of
$\ac \BB T$ by $(b,t)\cdot c=(bc,p(c)\inv\cdot t)$.
=Then by \thmref{equiv},
$\ac \BB T$
is a
$\sd{\ac \BB T}G-\BB$-equivalence.
The isomorphism of \thmref{classify Fell}
transforms the formulas 
\ref{it:equivleftmod}--\ref{it:equivrightip} of \thmref{equiv}
into the formulas \ref{it:oneleftmod}--\ref{it:onerightip}
of the current theorem.

Since $\AA$ is isomorphic to $\BB\rtimes K$ and $\sd{\BB\rtimes K}G$
is isomorphic to $\sd\AA G$, we conclude that $\AA$ is an $\sd \AA
G-\gua$ equivalence as claimed.

Now assume that the Haar system on $H$ is $G$-invariant.
Then $\sd HG$  and $\guh$ admit Haar systems by 
\cite{hkqwsemi}*{Lemma~6.4}
and 
\propref{prop-haar-guh},
respectively. Thus \cite{mw:fell}*{Theorem 6.4} applies, and we have a
Morita equivalence  between $\cs(\sd HG,\sd\AA G)$ and $\cs(G\under
H,G\under \AA)$.   Then by
\cite{hkqwsemi}*{Theorem~7.3}, we have
a Morita equivalence
  \begin{equation}
    C^*(H,\AA)\rtimes_\alpha G\sim C^*(G\under H,G\under \AA).
  \end{equation}
  as required.  
\end{proof}

\section{Stabilization}\label{sec:stabilization}

The
Ionescu-Kumjian-Sims-Williams
stabilization theorem \cite{stabilization}*{Theorem~3.7}
(which was based upon
\cite[Corollary~4.5]{kum:fell} and
\cite[Theorem~15]{muhly:fell})
says roughly that every Fell bundle over a groupoid is equivalent to a semidirect product.
In this section we will apply \thmref{equiv}
to give a new approach to the stabilization theorem.

Start with a Fell bundle $p:\BB\to G$.
Let $G$ act on itself by left translation,
and let $\ac \BB G\to \ac GG$ be the associated
action Fell bundle.
Now define another left action of $G$ on itself by
\[
x\cdot y=yx\inv.
\]
Call left translation the \emph{first action},
and this new action the \emph{second action}.
Then these two actions of $G$ on $G$ commute,
and the moment map
\[
\rho_G=r_G
\]
for the first action
is a principal $G$-bundle for the second action.
Thus \thmref{equiv} applies,
and \thmref{stabilization thm} below shows how
we recover the stabilization theorem \cite{stabilization}*{Theorem~3.7}.

\begin{thm}[\cite{stabilization}]\label{stabilization thm}
With the above notation,
$\ac \BB G$ is an $\sd {\ac \BB G}G-\BB$ equivalence.
\end{thm}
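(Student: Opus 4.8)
The plan is to read off \thmref{stabilization thm} as a direct instance of \thmref{equiv}. In the notation of \thmref{equiv} I would take the coefficient groupoid $K$ to be $G$, the space $T$ to be $G$, the $K$-action on $T$ to be left translation (the first action, whose moment map is $\rho_G=r_G$), and the auxiliary groupoid of \thmref{equiv} that acts principally on $T$ to be $G$ acting by the second action $x\cdot y=yx\inv$. With these identifications $\ac\BB T=\ac\BB G$ and $\sd{\ac\BB T}G=\sd{\ac\BB G}G$, and the Fell bundle ``$\BB\to K$'' is the given $p\colon\BB\to G$, so the first clause of the conclusion of \thmref{equiv} becomes exactly the assertion of \thmref{stabilization thm}. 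Thus all the real work lies in checking that the hypotheses of \thmref{equiv} hold for this data, which is precisely what the discussion preceding the theorem claims.

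First I would confirm that the second assignment is a genuine left action of the groupoid $G$ on the space $G$: $x\cdot y=yx\inv$ is defined exactly when $s_G(x)=s_G(y)$, it has moment map $s_G$, the only $x$ with $x\cdot y=y$ is the unit $s_G(y)$ (so it is free), and it satisfies $x_1\cdot(x_2\cdot y)=y(x_1x_2)\inv=(x_1x_2)\cdot y$. Next I would check that the first and second actions commute: whenever the relevant products exist, $h\cdot(x\cdot y)=h(yx\inv)=(hy)x\inv=x\cdot(hy)=x\cdot(h\cdot y)$, keeping track that the first action sees $r_G$ and the second sees $s_G$ so that the two sides have the same domain of definition. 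Finally I would verify that $r_G\colon G\to G\units$ is a principal $G$-bundle for the second action: it is invariant, since $r_G(yx\inv)=r_G(y)$; its fibres are the range fibres $G^{u}$, on which the second action is right translation by inverses; and writing $\Theta(x,y)=(yx\inv,y)$, the inverse image of a compact rectangle $L\times M\subseteq G\times G$ forces $y\in M$ and $x\in L\inv M$, both compact, so the action is proper. Being free as well, it is principal in the sense of \defnref{principal G space} by \propref{prop-proper-equiv}.

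With these verifications in hand, invoking \thmref{equiv} immediately yields the $\sd{\ac\BB G}G$- and $\BB$-module structures on $\ac\BB G$, the left and right inner products, and the fibrewise imprimitivity-bimodule property, that is, the claimed $\sd{\ac\BB G}G$--$\BB$-equivalence. I do not anticipate any substantive obstacle: the argument is entirely an application of the already-established \thmref{equiv}, and the only point requiring genuine care is bookkeeping, since the single groupoid $G$ occurs here in three distinct roles---the coefficient groupoid of $\BB$, the acting groupoid forming the semidirect product, and the underlying space $T$---so one must consistently match each of the two translation-type actions with the correct moment map ($r_G$ versus $s_G$) when confirming that they commute and that $r_G$ is principal for the second of them.
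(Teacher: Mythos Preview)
Your proposal is correct and is exactly the paper's approach: the paper states in the paragraph preceding \thmref{stabilization thm} that the two actions commute and that $r_G$ is a principal $G$-bundle for the second action, then says ``Thus \thmref{equiv} applies,'' with no further proof given. Your write-up simply fills in the routine verifications of those hypotheses that the paper leaves to the reader.
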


\begin{rem}
We were lead to our approach to the stabilization theorem
by the following idea:
the above equivalence
should be regarded as a groupoid form of crossed-product duality.  To
see why, suppose $G$ is a group. Then by \cite[Theorems~5.1,
7.1]{kmqw1} we have
\[
C^*\bigl(S(G\rtimes G,G),S(\BB\rtimes G,G)\bigr)
\simeq 
\bigl(C^*(G,\BB)\rtimes_\delta
  G\bigr)\rtimes_{\what\delta} G,
\]
where $\delta$ is the coaction of $G$ on $C^*(G,\BB)$ canonically
associated to the Fell bundle $\BB\to G$
as in \cite[Proposition~3.1]{kmqw1}, and $\what\delta$ is the dual action of $G$ on the crossed product $C^*(G,\BB)\rtimes_\delta G$.  Thus in this case
the Morita equivalence
\[
C^*(\ac GG,\ac \BB G)\rtimes_\alpha G\sim C^*(G,\BB)
\]
that we get by applying the
Yamagami-Muhly-Williams Equivalence Theorem
\cite{mw:fell}*{Theorem~6.4}
to the Fell-bundle equivalence of \thmref{stabilization thm}
gives (the Morita-equivalence form of) Katayama's duality theorem
for maximal coactions and full dual crossed products
\cite[Theorem~8.1]{kmqw1}
(see \cite{katayama}*{Theorem~8} for the original version).
\end{rem}


\begin{bibdiv}
\begin{biblist}

\bib{BusEch}{article}{
      author={Buss, A.},
      author={Echterhoff, S.},
       title={Universal and exotic generalized fixed-point algebras for weakly
  proper actions and duality},
        date={2014},
     journal={Indiana Univ. Math. J.},
      volume={63},
      number={6},
       pages={1659\ndash 1701},
}

\bib{busmeyfibration}{article}{
      author={Buss, A.},
      author={Meyer, R.},
       title={Iterated crossed products for groupoid fibrations},
       date={2016},
        note={(arXiv:1604.02015 [math.OA])},
}

\bib{hkqwsemi}{article}{
      author={Hall, L.},
      author={Kaliszewski, S.},
      author={Quigg, J.},
      author={Williams, D.~P.},
       title={{Groupoid semidirect product Fell bundles I --- Actions by
  isomorphisms}},
  		note={preprint, 2021 (arXiv:2105.02275  [math.OA])},
}

\bib{stabilization}{article}{
      author={Ionescu, M.},
      author={Kumjian, A.},
      author={Sims, A.},
      author={Williams, D.~P.},
       title={A stabilization theorem for {F}ell bundles over groupoids},
        date={2018},
     journal={Proc. Roy. Soc. Edinburgh Sect. A},
      volume={148},
      number={1},
       pages={79\ndash 100},
}

\bib{kmqw1}{article}{
      author={Kaliszewski, S.},
      author={Muhly, P.~S.},
      author={Quigg, J.},
      author={Williams, D.~P.},
       title={Coactions and {F}ell bundles},
        date={2010},
     journal={New York J. Math.},
      volume={16},
       pages={315\ndash 359},
}

\bib{kmqw2}{article}{
      author={Kaliszewski, S.},
      author={Muhly, P.~S.},
      author={Quigg, J.},
      author={Williams, D.~P.},
       title={{Fell bundles and imprimitivity theorems}},
        date={2013},
     journal={M\"unster J. Math.},
      volume={6},
       pages={53\ndash 83},
}

\bib{kmqw3}{article}{
      author={Kaliszewski, S.},
      author={Muhly, P.~S.},
      author={Quigg, J.},
      author={Williams, D.~P.},
       title={Fell bundles and imprimitivity theorems: towards a universal
  generalized fixed point algebra},
        date={2013},
     journal={Indiana Univ. Math. J.},
      volume={62},
       pages={1691\ndash 1716},
}

\bib{katayama}{article}{
      author={Katayama, Y.},
       title={{Takesaki's duality for a non-degenerate co-action}},
        date={1984},
     journal={Math. Scand.},
      volume={55},
       pages={141\ndash 151},
}

\bib{kum:fell}{article}{
      author={Kumjian, A.},
       title={{Fell bundles over groupoids}},
        date={1998},
     journal={Proc. Amer. Math. Soc.},
      volume={126},
       pages={1115\ndash 1125},
}

\bib{kmrw}{article}{
      author={Kumjian, A.},
      author={Muhly, P.~S.},
      author={Renault, J.~N.},
      author={Williams, D.~P.},
       title={The {B}rauer group of a locally compact groupoid},
        date={1998},
     journal={Amer. J. Math.},
      volume={120},
      number={5},
       pages={901\ndash 954},
}

\bib{muhly:fell}{incollection}{
      author={Muhly, P.~S.},
       title={Bundles over groupoids},
        date={2001},
   booktitle={Groupoids in analysis, geometry, and physics ({B}oulder, {CO},
  1999)},
      series={Contemp. Math.},
      volume={282},
   publisher={Amer. Math. Soc.},
     address={Providence, RI},
       pages={67\ndash 82},
}

\bib{mw:fell}{article}{
      author={Muhly, P.~S.},
      author={Williams, D.~P.},
       title={Equivalence and disintegration theorems for {F}ell bundles and
  their ${C}^*$-algebras},
        date={2008},
     journal={Dissertationes Mathematicae},
      volume={456},
       pages={1\ndash 57},
}

\bib{palais}{article}{
      author={Palais, R.~S.},
       title={On the existence of slices for actions of non-compact {L}ie
  groups},
        date={1961},
     journal={Ann. of Math. (2)},
      volume={73},
       pages={295\ndash 323},
}

\bib{proper}{inproceedings}{
      author={Rieffel, M.~A.},
       title={{Proper actions of groups on $C\sp *$-algebras}},
        date={1990},
   booktitle={{Mappings of operator algebras (Philadelphia, PA, 1988)}},
   publisher={Birkh{\"a}user Boston},
     address={Boston, MA},
}

\bib{danacrossed}{book}{
      author={Williams, D.~P.},
       title={Crossed products of {$C{\sp \ast}$}-algebras},
      series={Mathematical Surveys and Monographs},
   publisher={American Mathematical Society},
     address={Providence, RI},
        date={2007},
      volume={134},
}

\bib{wil:toolkit}{book}{
      author={Williams, D.~P.},
       title={A tool kit for groupoid {\cs}-algegras},
      series={Mathematical Surveys and Monographs},
   publisher={American Mathematical Society},
     address={Providence, RI},
        date={2019},
      volume={241},
}

\bib{yamagami}{unpublished}{
      author={Yamagami, S.},
       title={{On the ideal structure of $C^*$-algebras over locally compact
  groupoids}},
        note={preprint, 1987},
}

\end{biblist}
\end{bibdiv}

\end{document}